\numberwithin{equation}{section}
\newtheorem{theorem}{Theorem}[section]
\newtheorem{lemma}[theorem]{Lemma}
\newtheorem{proposition}[theorem]{Proposition}
\newtheorem{corollary}[theorem]{Corollary}
\theoremstyle{definition}
\newtheorem{definition}[theorem]{Definition}
\theoremstyle{remark}
\newtheorem{remark}[theorem]{Remark}
\newcommand{\curl}{\mathrm{curl}\,}
\newcommand{\divv}{\mathrm{div}\,}
\newcommand{\inn}{\text{in }}
\newcommand{\onn}{\text{on }}
\newcommand{\supp}{\mathrm{supp}}
\title{Steady three-dimensional ideal 
flows with nonvanishing vorticity
in domains with edges}
\author{Douglas S. Seth\thanks{Centre for Mathematical Sciences, Lund University, PO Box 118, 22100 Lund, Sweden; \text{douglas.svensson\_seth@math.lu.se}}}
\begin{document}
\maketitle
\begin{abstract}
We prove an existence result for solutions to the stationary Euler equations in a domain with nonsmooth boundary. This is an extension of a previous existence result in smooth domains by Alber (1992)\cite{Alber1992}. The domains we consider have a boundary consisting of three parts, one where fluid flows into the domain, one where the fluid flows out, and one which no fluid passes through. These three parts meet at right angles. An example of this would be a right cylinder with fluid flowing in at one end and out at the other, with no fluid going through the mantle. A large part of the proof is dedicated to studying the Poisson equation and the related compatibility conditions required for solvability in this kind of domain.
\\
\\
\textbf{Keywords:} Fluid Dynamics, Nonsmooth Domains, Partial Differential Equations, Steady Euler Equations, Vorticity.
\end{abstract}
\begin{center}

\end{center}
\pagebreak
\section{Introduction}
A steady flow of an inviscid incompressible fluid through a simply connected domain $\Omega\subset \mathbb{R}^3$ satisfies the equations
\begin{alignat}{2}
(\bm{v}\cdot\nabla)\bm{v}+\nabla p&=0 &&\qquad \inn \Omega, \label{eq:Euler1a}\\
\divv \bm{v}&=0 &&\qquad \inn \Omega, \label{eq:Euler2}
\end{alignat}
and the boundary condition
\begin{equation}\label{eq:Eulerbdry}
\bm{v}\cdot \bm{n}=\phi \qquad \onn \partial \Omega,
\end{equation}
where $\bm{v}$ is the velocity field of the fluid, $p$ is the pressure, $\bm{n}$ is the outward normal of $\partial\Omega$ and $\phi$ is a given function satisfying
\begin{equation}\label{eq:phicompcond}
\int_{\partial\Omega}\phi(x)dS_x=0,
\end{equation}
see \cite{Marchioro1994}.
With the extra assumption that the velocity field is irrotational, i.e. that $\curl\bm{v}=0$, the complexity of the problem is reduced. Indeed, in this case, the velocity field is generated by a potential, which satisfies the Laplace equation with Neumann boundary conditions. This immediately gives a velocity field that satisfies \eqref{eq:Euler2} and \eqref{eq:Eulerbdry}. A quick calculation shows that \eqref{eq:Euler1a} is also satisfied given that the pressure is defined as $p=-\frac{1}{2}\vert \bm{v}\vert^2+C$ for some arbitrary constant $C$. The assumption that the flow is irrotational is, while mathematically convenient, often not physical as vorticity can be generated at rigid walls or by external forces, such as wind.

There are however also some existence results for rotational flows. Alber \cite{Alber1992} proved an existence result in smooth domains, which has provided much of the inspiration for this paper. The proof relies on adding two additional boundary conditions on the set where fluid enters the domain, which prescribe the vorticity there and make sure that the solution is rotational. The velocity field $\bm{v}$ is then split into a given solution to \eqref{eq:Euler1a}--\eqref{eq:Eulerbdry} and a small perturbation such that $\bm{v}$ satisfies \eqref{eq:Euler1a}--\eqref{eq:Eulerbdry} and the additional boundary conditions if and only if the perturbation is a fixed point of a certain operator. Finally, it is shown that this operator is a well defined contraction and thus has a unique fixed point. The result by Alber has also been improved upon by Tang and Xin \cite{TangXin2009} who prove the existence of a rotational solutions to the steady Euler equations which is the perturbation of a more general class of vector fields. This means that their result does not rely on the existence of a base flow that solves the steady Euler equations. Molinet \cite{Molinet1999} extended Alber's method to nonsmooth domains and compressible flows. However, he considers domains with a boundary consisting of smooth parts which meet at an angle smaller than $2\pi/7$ and any integer multiple of the angle may not equal $\pi$. Finally, we mention the result by Buffoni and Wahlén \cite{Buffoni2019}. They prove the existence of rotational solutions to the steady Euler equations in an unbounded domain of the form $(0,L)\times\mathbb{R}^2$, where the flows are periodic in both the unbounded directions. This is not an exhaustive list of the results and more can be found in the references within the cited works.

In this paper we prove existence of a rotational solutions to \eqref{eq:Euler1a}--\eqref{eq:Eulerbdry} in a simply connected domain $\Omega$, which has a boundary consisting of three $C^4$ parts that meet at a right angle.  We denote these three parts by $\partial\Omega_+$, $\partial\Omega_-$ and $\partial\Omega_0$, where the subscript denotes the sign of $\phi$ or that $\phi\equiv 0$. We also impose the restriction that $\overline{\partial\Omega}_-\cap\overline{\partial\Omega_+}=\emptyset$. This domain clearly is different from the ones in the results cited above. The easiest example of such a domain is one where the boundary is a right circular cylinder. Then the mantle is $\partial\Omega_0$ and the bases are $\partial\Omega_\pm$. In this case the solution can describe a rotational flow through a straight pipe. However many other domains are allowed. For example, one can allow a domain of the form depicted in figure \ref{Fig1}, where $\partial\Omega_\pm$ are flat. This can describe flow through curved pipes. One can also allow the surfaces $\partial\Omega_\pm$ not to be flat. However this demands an additional restriction on the curves $\partial\partial\Omega_\pm:=\overline{\partial\Omega_\pm}\cap \overline{\partial\Omega_0}$: they should be lines of curvature in any smoothness preserving extension of $\partial\Omega_\pm$ and $\partial\Omega_0$. Recall that a curve is a line of curvature in some surface $M$ if its tangent is a principal direction of $M$, i.e. the tangent vector is an eigenvector to the shape operator \cite[Section 3--2]{Carmo1976}. Note that if two surfaces intersect at a right angle, then the line of intersection is a line of curvature in one of the surfaces if and only if it is a line of curvature in the other. The condition that $\partial\partial\Omega_\pm$ are lines of curvature is in fact required even if $\partial\Omega_\pm$ is flat, but then it is automatically satisfied.

\begin{figure}
\begin{center}
\includegraphics[scale=1.2]{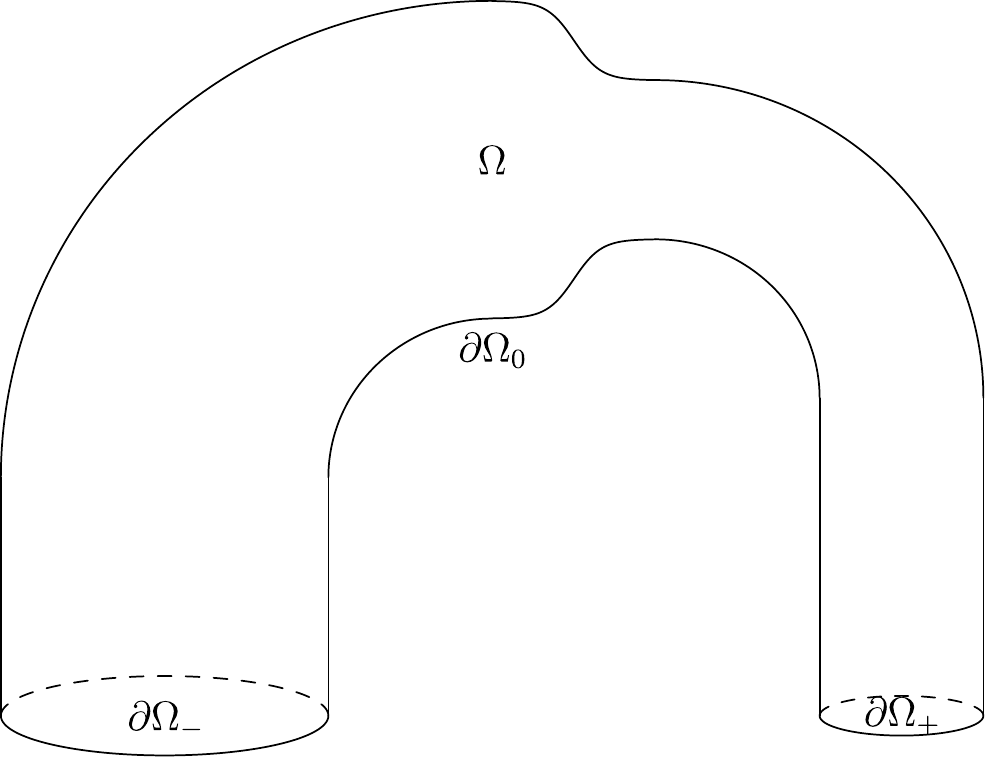}
\caption{A possible shape of $\Omega$}\label{Fig1}
\end{center}
\end{figure}

The proof itself is in overall structure similar to that in \cite{Alber1992}. It relies on a base flow which is perturbed by a fixed point of an operator to give us our solution. The idea behind this operator relies on rewriting the problem in the velocity-vorticity formulation, which means we replace \eqref{eq:Euler1a} with
\begin{equation}\label{eq:vel-vort}
(\bm{v}\cdot\nabla)\curl\bm{v}=(\curl\bm{v}\cdot\nabla)\bm{v}.
\end{equation}
More details about this can be found in \cite{Alber1992}. The operator itself is defined through first solving $(\bm{v}\cdot\nabla)\bm{f}=(\bm{f}\cdot\nabla)\bm{v}$ for some given $\bm{v}$, and then finding a velocity field with vorticity $\bm{f}$. The main difference, as compared to working in a smooth domain, lies in proving that the operator is a well defined contraction. This is done in two main steps, which we describe below in opposite order of the way the operator acts, because finding a velocity field with vorticity $\bm{f}$ puts some extra conditions on $\bm{f}$ that have to be incorporated in the problem of finding $\bm{f}$. In this order the first problem is finding a unique $\bm{w}$ which solves
\begin{equation}\label{prob:Div-Curl}
\begin{aligned}
\curl \bm{w}&=\bm{f} &&\inn \Omega,\\
\divv \bm{w}&=0 && \inn \Omega,\\
\bm{w}\cdot\bm{n}&=0 && \onn \partial\Omega,
\end{aligned}
\end{equation}
for a given $\bm{f}$. This will be referred to as the \textit{div-curl problem} $(\mathcal{DCP})$. For smooth domains this is a solved problem, but for the nonsmooth domains considered here results are more sparse. Zajaczkowski \cite{Zajaczkowski1988} proved an existence result but it requires $\bm{f}$ to satisfy certain compatibility conditions which are left implicit. We require these compatibility conditions formulated explictly to check that they are satisfied. Thus we prove an existence result in this paper with explicit compatibility conditions for $\bm{f}$. To do this we reduce the problem to three instances of the Poisson equation. This is a problem which has been studied with explicit compatibility conditions in various polyhedra with flat surfaces, see for example \cite{Grisvard1992, Grisvard2011}. However the author is unaware of any such results in the domains considered here. Thus, our results concerning this problem might be of independent interest. This is also where the condition that $\partial\partial\Omega_\pm$ is a curvature line shows up, which seems to be a novel observation. Possibly this is because the known results are focused on domains with flat surfaces where the condition is always satisfied. 

The second problem is, for given $\bm{v}$ and $\bm{f}_0$,  to find a unique $\bm{f}$ satisfying
\begin{equation}\label{prob:Transport}
\begin{aligned}
&(\bm{v}\cdot\nabla)\bm{f}=(\bm{f}\cdot \nabla )\bm{v} &&\inn \Omega,\\
&\bm{f}=\bm{f}_0 &&\onn \partial\Omega_-,
\end{aligned}
\end{equation}
together with the compatibility conditions mentioned above. We will refer to this as the \textit{transport problem} $(\mathcal{TP})$. The way $\bm{f}_0$ is chosen is the key to get a solution satisfying the compatibility conditions. Moreover, as long as $\bm{f}_0$ is non-trivial, we end up with a solution with nonvanishing vorticity. This problem is also solved differently than  in \cite{Alber1992}. Not so much because of the domains' geometry, but because we work with a slightly lower regularity, since it simplifies the compatibility conditions. However this means that we cannot use the same method as in \cite{Alber1992} to find a solution. Additionally, we have to prove various estimates for these problems, to show that the operator we are working with is indeed a contraction.

The overall layout for this article is as follows. In the next section we describe the function spaces which we are working with. In section 3 we present the main result in a more precise fashion, and its proof, given that we can solve the two problems formulated above. In section 4 we show that $(\mathcal{DCP})$ has a unique solution and prove some estimates related to this problem, and in section 5 we show the corresponding results and estimates for $(\mathcal{TP})$. In section 6 we show the existence of solutions to the irrotational problem in a cylindrical domain which satisfies the conditions we put on our base flow. This shows that all the assumptions of our main result can be fulfilled. Interestingly, this existence result seems to be missing in the literature, and could potentially be of independent interest.
 
\section{Function Spaces}
For a function $f:X\to Y$, where $X$ and $Y$ are Banach spaces, we use the notation
\[
\overline{f}:=\sup_{x\in X} \Vert f(x)\Vert_Y,\qquad \underline{f}:=\inf_{x\in X} \Vert f(x)\Vert_Y.
\]
Morover, we will make frequent use of the notation $A\lesssim B$ by which we mean that there exists a constant $C$ (independent of $A$ and $B$) such that $A\leq CB$.

Let $U$ be an open subset of $\mathbb{R}^n$ and let $X$ be a Banach space. We let $\mathcal{S}(U)$ denote the Schwartz space on $U$ and let $\mathcal{S}'(U;X):=\mathcal{L}(\mathcal{S}(U);X)$ be the space of vector valued tempered distributions (for a more comprehensive treatment of these spaces see e.g. \cite{Amann1997,Amann2000}). This allows us to define the function spaces we will mainly be working with in this paper.
\begin{definition}
Let $X$ be a Banach space and let $s\in \mathbb{R}$. We define $H^s(\mathbb{R}^n;X)$ as the Banach space of $f\in \mathcal{S}'(\mathbb{R}^n;X)$ such that
\[
\int_{\mathbb{R}^n}(1+\vert \bm\xi \vert^2)^{s}\Vert \mathcal{F}[f](\bm\xi)\Vert_X^2d\bm{\xi}<\infty,
\]
where $\mathcal{F}$ denotes the Fourier transform,
equipped with the norm
\[
\Vert f\Vert_{H^s(\mathbb{R}^n;X)}=\left(\int_{\mathbb{R}^n}(1+\vert \bm\xi \vert^2)^{s}\Vert \mathcal{F}[f](\bm\xi)\Vert_X^2d\bm{\xi}\right)^{1/2}.
\]
If $X$ is a Hilbert space, then $H^s(\mathbb{R}^n;X)$ becomes a Hilbert space with inner product
\[
\langle f,g\rangle_{H^s(\mathbb{R}^n;X)}=\int_{\mathbb{R}^n}(1+\vert \bm\xi \vert^2)^{s}\langle \mathcal{F}[f](\bm\xi),\mathcal{F}[g](\bm\xi)\rangle_Xd\bm{\xi}.
\]
\end{definition}
\begin{definition}
Let $U$ be an open subset of $\mathbb{R}^n$, X a Banach space and let $s\in \mathbb{R}$. We define $H^s(U;X)$ as the space of $f\in S'(U;X)$ such that there exists $g\in H^s(\mathbb{R}^n;X)$ with $f=g\vert_U$. The space is a Banach space with norm
\[
\Vert f\Vert_{H^s(U;X)}=\inf\{\Vert g\Vert_{H^s(\mathbb{R}^n;X)}: g\vert_U=f\}.
\]
\end{definition}
These spaces are extensions of the standard Sobolev spaces. If $s\in \mathbb{N}$ and $X=\mathbb{R}$ we have $H^s(U;X)=W^{s,2}(U)$. Moreover if $s\in \mathbb{R}$ and $X=\mathbb{R}$ they coicide with the Bessel potential spaces $H^s(U;X)=H^s(U)$. In appendix \ref{appendix} we include a technical result, which we need in section 5, about the spaces $H^s(U;X)$ in the special case when $X$ is another space of the same form.
\section{Main Result}
The aim of the paper is to prove the following result.
\begin{theorem}\label{Thm:main}
Let $\Omega\subset \mathbb{R}^3$ be a simply connected domain, whose boundary $\partial\Omega$ consists of three parts $C^4$ parts $\partial\Omega_-$, $\partial\Omega_+$ and $\partial\Omega_0$. Moreover, let $\overline{\partial\Omega}_+\cap\overline{\partial\Omega}_-=\emptyset$ and that partial $\partial\Omega_\pm$ meet at a right angle and that $\partial\partial\Omega_\pm$ are curvature lines. For any $1/2<\sigma<1$ let $\phi\in H^{3/2+\sigma}(\partial\Omega;\mathbb{R}^3)$ be a function satisfying \eqref{eq:phicompcond}. Assume that $(\bm{v}_0,p_0)\in H^{\sigma+2}(\Omega;\mathbb{R}^3)\times H^{\sigma+2}(\Omega;\mathbb{R})$ is a solution to equations \eqref{eq:Euler1a}--\eqref{eq:Eulerbdry} satisfying $\curl \bm{v}_0=0$ in $\Omega$ and $\underline{\bm{v}_0}>0$. Moreover, if the integral curves of $\bm{v}_0$ are of finite length and none of the integral curves of $\bm{v}_0$ are closed, then there exist constants
\begin{alignat*}{2}
\hat{\gamma}&>0,\\
K_i&>0, &&\qquad i\in\{1,2,3\},
\end{alignat*}
with the following properties:

(i) Let $g\in H^{\sigma+2}(\partial\Omega_-;\mathbb{R})$, $h\in H^{\sigma+1}(\partial\Omega_-;\mathbb{R})$ be functions that satisfy
\begin{equation}\label{eq:ghcond}
h\vert_{\partial\partial\Omega_-}=\nabla_T g\vert_{\partial\partial\Omega_-}=0,
\end{equation}
and
\begin{equation}\label{eq:smallbdrydata}
\Vert h\Vert_{H^{\sigma+1}(\partial\Omega_-;\mathbb{R})}+\Vert \nabla_Tg\Vert_{H^{\sigma+1}(\partial\Omega_-;\mathbb{R}^3)}\leq K_1,
\end{equation}
where $\nabla_Tg$ denotes the tangential derivative of $g$.
Then there exists a solution $(\bm{v},p)\in H^{\sigma+2}(\Omega;\mathbb{R}^3)\times H^{\sigma+2}(\Omega;\mathbb{R})$ to \eqref{eq:Euler1a}--\eqref{eq:Eulerbdry}, which also satisfies
\begin{alignat}{2}
\bm{n}\cdot\curl \bm{v}&=h &&\qquad \onn \partial\Omega_-,\label{eq:bdry1}\\
\frac{1}{2}\vert \bm{v}\vert^2+p&=g+\frac{1}{2}\vert \bm{v}_0\vert^2+p_0 &&\qquad \onn \partial\Omega_-.\label{eq:bdry2}
\end{alignat}

(ii) The velocity field of this solution, $\bm{v}$, satisfies 
\begin{equation}\label{eq:vclose}
\Vert \bm{v}-\bm{v}_0\Vert_{H^{\sigma+2}(\Omega;\mathbb{R}^3)}\leq \hat{\gamma}
\end{equation}
and $(\bm{v},p)$ is the only solution to \eqref{eq:Euler1a}--\eqref{eq:Eulerbdry}, \eqref{eq:bdry1} and \eqref{eq:bdry2} in $H^{\sigma+2}(\Omega;\mathbb{R}^3)\times H^{\sigma+2}(\Omega;\mathbb{R})$ with $\bm{v}$ satisfying \eqref{eq:vclose}.

(iii) If $(g^{(1)}, h^{(1)})$ and $(g^{(2)},h^{(2)})$ are two sets of boundary data on $\partial\Omega_-$ both satisfying \eqref{eq:smallbdrydata} with corresponding solutions $(\bm{v}^{(1)},p^{(1)})$ and $(\bm{v}^{(2)},p^{(2)})$ to \eqref{eq:Euler1a}--\eqref{eq:Eulerbdry}, \eqref{eq:bdry1} and \eqref{eq:bdry2}, both satisfying \eqref{eq:vclose}, then
\begin{equation}\label{eq:contdepvelocity}
\Vert \bm{v}^{(1)}-\bm{v}^{(2)}\Vert_{H^1(\Omega;\mathbb{R}^3)}\leq K_2\left(\Vert h^{(1)}-h^{(2)}\Vert_{L^2(\partial\Omega_-;\mathbb{R})}+\Vert \nabla_T(g^{(1)}-g^{(2)})\Vert_{L^2(\partial\Omega_-;\mathbb{R}^3)}\right)
\end{equation}
and
\begin{equation}\label{eq:contdeppressure}
\begin{aligned}
\Vert p^{(1)}-p^{(2)}\Vert_{H^1(\Omega;\mathbb{R})}&\leq K_3\left(\Vert h^{(1)}-h^{(2)}\Vert_{L^2(\partial\Omega_-;\mathbb{R})}+\Vert \nabla_T(g^{(1)}-g^{(2)})\Vert_{L^2(\partial\Omega_-;\mathbb{R}^3)}\right.\\
&\qquad\qquad+\left.\Vert g^{(1)}-g^{(2)}\Vert_{L^2(\partial\Omega_-;\mathbb{R})}\right).
\end{aligned}
\end{equation}
\end{theorem}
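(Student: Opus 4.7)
The plan is to follow the overall strategy of Alber \cite{Alber1992}, writing the desired solution as a perturbation $\bm{v}=\bm{v}_0+\bm{w}$ of the given irrotational base flow and reformulating the rotational Euler problem as a fixed-point equation for $\bm{w}$. Since $\curl\bm{v}_0=0$, the vorticity of $\bm{v}$ is simply $\bm{f}:=\curl\bm{w}$, and the condition $\bm{v}\cdot\bm{n}=\phi=\bm{v}_0\cdot\bm{n}$ forces $\bm{w}\cdot\bm{n}=0$ on $\partial\Omega$. Using the identity $(\bm{v}\cdot\nabla)\bm{v}=\nabla(\tfrac12|\bm{v}|^2)-\bm{v}\times\curl\bm{v}$ and then taking the curl of \eqref{eq:Euler1a} one arrives at the velocity-vorticity equation \eqref{eq:vel-vort}, so the problem decouples naturally into the transport problem $(\mathcal{TP})$ for $\bm{f}$ along the streamlines of $\bm{v}$, followed by the div-curl problem $(\mathcal{DCP})$ that recovers $\bm{w}$ from $\bm{f}$.

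On this basis I would define an operator $T$ on a closed ball $B_R\subset H^{\sigma+2}(\Omega;\mathbb{R}^3)$ around $0$ as follows. Given $\bm{w}\in B_R$, set $\bm{v}=\bm{v}_0+\bm{w}$ and construct boundary data $\bm{f}_0$ on $\partial\Omega_-$ from $(g,h)$: the normal component is prescribed by $\bm{n}\cdot\bm{f}_0=h$, while the tangential components are determined by evaluating $\bm{v}\times\bm{f}_0=\nabla_T(g+\tfrac12|\bm{v}_0|^2+p_0)$ on $\partial\Omega_-$, which is the trace of the Bernoulli identity forced by \eqref{eq:bdry2}. Here the smallness bound \eqref{eq:smallbdrydata} together with $\underline{\bm{v}_0}>0$ guarantees that $\bm{v}$ remains transverse to $\partial\Omega_-$ and that $\bm{f}_0$ is well defined and bounded in $H^{\sigma+1}$. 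I would then solve $(\mathcal{TP})$ with the tools of section 5 (using the hypothesis that the integral curves of $\bm{v}_0$ are of finite length and not closed, which persists under small perturbations) to obtain $\bm{f}\in H^{\sigma+1}$, verify the compatibility conditions of section 4, and finally solve $(\mathcal{DCP})$ to produce $T(\bm{w})=\bm{w}^{\mathrm{new}}\in H^{\sigma+2}$.

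Next I would show that $T$ maps $B_R$ into itself for sufficiently small $R$ and that it is a contraction in the weaker norm $H^1$. The self-mapping property follows from combining the $H^{\sigma+2}$ estimates for $(\mathcal{DCP})$ and $(\mathcal{TP})$ with the smallness hypothesis \eqref{eq:smallbdrydata}, choosing $K_1$ small enough that the quadratic terms arising from the nonlinearity of $(\mathcal{TP})$ can be absorbed. The contraction property is obtained by differencing the constructions for two inputs $\bm{w}^{(1)},\bm{w}^{(2)}\in B_R$ and applying the low-regularity Lipschitz estimates of the same two problems; the loss of derivatives is acceptable because the argument is closed in $H^1$ while the uniform $H^{\sigma+2}$ bound is carried as a priori information. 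Banach's fixed-point theorem in the complete metric space $\overline{B_R}\cap H^1$ then yields a unique fixed point $\bm{w}$, which produces $\bm{v}=\bm{v}_0+\bm{w}\in H^{\sigma+2}$ satisfying $\curl\bm{v}=\bm{f}$, $\divv\bm{v}=0$ and $\bm{v}\cdot\bm{n}=\phi$. The pressure is then recovered by setting $p=B-\tfrac12|\bm{v}|^2$, where $B$ is the Bernoulli function defined on $\Omega$ via transport $\bm{v}\cdot\nabla B=0$ with data $B=g+\tfrac12|\bm{v}_0|^2+p_0$ on $\partial\Omega_-$, so that \eqref{eq:Euler1a} and \eqref{eq:bdry2} are automatic. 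The boundary condition \eqref{eq:bdry1} is ensured by the construction of $\bm{f}_0$, and the continuous dependence statements \eqref{eq:contdepvelocity}--\eqref{eq:contdeppressure} follow from reusing the same differenced estimates, the $H^1$ bound on the pressure differences coming from $\nabla p^{(1)}-\nabla p^{(2)}=-(\bm{v}^{(1)}\cdot\nabla)\bm{v}^{(1)}+(\bm{v}^{(2)}\cdot\nabla)\bm{v}^{(2)}$ together with \eqref{eq:bdry2} to fix the additive constant.

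The step I expect to be the main obstacle is the choice of $\bm{f}_0$ and the verification that the explicit compatibility conditions for $(\mathcal{DCP})$ are both satisfiable at the inflow and propagated by $(\mathcal{TP})$ to the rest of $\partial\Omega$. Because $(\mathcal{DCP})$ on this edged domain demands that $\bm{f}$ be divergence-free, that $\bm{n}\cdot\bm{f}$ have vanishing mean on each connected component of $\partial\Omega$, and crucially that $\bm{n}\cdot\bm{f}$ vanish along the edge curves $\partial\partial\Omega_\pm$, one has to arrange for all of these to hold simultaneously. This is exactly the reason for the conditions $h|_{\partial\partial\Omega_-}=0$ and $\nabla_T g|_{\partial\partial\Omega_-}=0$ in \eqref{eq:ghcond}, and the curvature-line assumption on $\partial\partial\Omega_\pm$ enters here to ensure that the transport of $\bm{f}$ along streamlines of $\bm{v}$ preserves the vanishing of its normal component at the edges, so that $T(\bm{w})$ lands in a space where $(\mathcal{DCP})$ is solvable with the required $H^{\sigma+2}$ regularity.
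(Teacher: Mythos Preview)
Your outline is essentially the paper's own proof: define the fixed-point operator $\bm{u}\mapsto\bm{w}$ by composing $(\mathcal{TP})$ with $(\mathcal{DCP})$, show self-mapping on $V_\gamma$ via the $H^{\sigma+2}$ estimates and contraction in $H^1$ via the low-regularity estimates, close by weak compactness of $V_\gamma$ in $H^1$, and recover the pressure through the Bernoulli function. Two small corrections to your last paragraph: the compatibility condition required for $(\mathcal{DCP})$ is that the \emph{full} vector $\bm{f}$ (not merely $\bm{n}\cdot\bm{f}$) vanish on the edge curves $\partial\partial\Omega_\pm$, and there is no separate mean-zero condition; moreover, the curvature-line hypothesis is used not in the transport step but in the regularity analysis of $(\mathcal{DCP})$ itself (Proposition~\ref{Prop:Yspace}), where it kills a shape-operator term that would otherwise obstruct the edge compatibility of the localized Poisson problems. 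The propagation of $\bm{f}|_{\partial\partial\Omega_-}=0$ to $\bm{f}|_{\partial\partial\Omega_+}=0$ by $(\mathcal{TP})$ follows instead from the fact that integral curves tangent to $\partial\Omega$ remain in $\partial\Omega$ (Lemma~\ref{Lemma:IntegralCurves}(iii)).
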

\begin{remark} We will refer to the task of finding a solution $(\bm{v},p)\in H^{s+1}(\Omega;\mathbb{R}^3)\times H^{s+1}(\Omega;\mathbb{R})$ to \eqref{eq:Euler1a}--\eqref{eq:Eulerbdry}, \eqref{eq:bdry1} and \eqref{eq:bdry2} as the problem $(\mathcal{P})$.
\end{remark}
\begin{remark} We show the existence of an irrotational solution $(\bm{v}_0,p)$ satisfying the requirements given in Theorem \ref{Thm:main} in section \ref{sec:Irrotational} for the special case when $\Omega$ is a cylinder. The problem of finding such a solution in a general domain is beyond the scope of this paper.
\end{remark}
\begin{remark} $\sigma$ will denote a fixed real number between $1/2$ and $1$ throughout the rest of the paper.
\end{remark}
\subsection{Proof of the Main Result}
The idea is to define an operator, $B$, in such a way that $(\bm{v},p)$ is a solution to $(\mathcal{P})$ if and only if $\bm{u}$ is a fixed point of $B$, where $\bm{u}=\bm{v}-\bm{v}_0$. Then we show that $B$ is well defined and a contraction on a sufficiently small neighbourhood of the origin. This gives us a unique fixed point, and hence the desired solution, by the Banach fixed point theorem. 

To define $B$ we start by defining $V$ as the space of functions $\bm{u}\in H^{\sigma+2}(\Omega;\mathbb{R}^3)$ which satisfy
\begin{equation*}
\begin{aligned}
\divv \bm{u}&=0 &&\inn \Omega,\\
\bm{u}\cdot \bm{n}&=0 &&\onn \partial\Omega.
\end{aligned}
\end{equation*}
Then by $V_\gamma$ we denote the closed ball of radius $\gamma$ in $V$, i.e. the functions $\bm{u}\in V$ satisfying $\Vert \bm{u}\Vert_{H^{\sigma+2}(\Omega;\mathbb{R}^3)}\leq \gamma$.
Now $B:V_\gamma\to V$ is defined as 
\[
B(\bm{u})=\bm{w}
\]
where $\bm{w}$ is given by
\[
\curl\bm{w}=\bm{f}
\]
and $\bm{f}$ is the solution to $(\mathcal{TP})$ where $\bm{v}=\bm{v}_0+\bm{u}$ and $\bm{f}_0$ is defined by
\begin{alignat}{2}
&\bm{f}_0\cdot \bm{n}=h &&\qquad \onn \partial\Omega_-,\label{eq:f0defn}\\
&{\bm{f}_0}_T=\frac{h}{\bm{v}\cdot\bm{n}}\bm{v}_T-\frac{1}{\bm{v}\cdot\bm{n}}\bm{n}\times\nabla_Tg &&\qquad \onn \partial\Omega_-.\label{eq:f0deft}
\end{alignat}
To show that $B$ is well defined we have to find a unique $\bm{f}\in H^{\sigma+1}(\Omega;\mathbb{R}^3)$ that solves $(\mathcal{TP})$, that is a solution to
\[
\begin{aligned}
&(\bm{v}\cdot\nabla)\bm{f}=(\bm{f}\cdot \nabla )\bm{v} &&\inn \Omega,\\
&\bm{f}=\bm{f}_0 &&\onn \partial\Omega_-,
\end{aligned}
\]
and find a unique $\bm{w}\in H^{\sigma+2}(\Omega)$ which solves $(\mathcal{DCP})$, i.e. a solution to
\[
\begin{aligned}
\curl \bm{w}&=\bm{f} &&\inn \Omega,\\
\divv \bm{w}&=0 && \inn \Omega,\\
\bm{w}\cdot\bm{n}&=0 && \onn \partial\Omega,
\end{aligned}
\]
As mentioned in the introduction solving $(\mathcal{DCP})$ requires some additional conditions to be imposed on $\bm{f}$, which further complicates $(\mathcal{TP})$. Finding a solution to $(\mathcal{TP})$ that satisfies these conditions can be achieved by imposing additional conditions on $\bm{f}_0$ (and by extension $h$ and $g$).

The solutions to these problems and the conditions discussed above are given by the two following theorems.
\begin{theorem}\label{Theorem:TP} Let $\bm{v}_0$ satisfy the hypothesis of Theorem \ref{Thm:main}. There exists a constant $\gamma_0$ such that if $\gamma\leq \gamma_0$ and $\bm{u}\in V_\gamma$, then a unique solution $\bm{f}\in H^{\sigma+1}(\Omega;\mathbb{R}^3)$ to $(\mathcal{TP})$ exists and satisfies $\divv \bm{f}=0$. Moreover, the solution satisfies the estimates
\begin{subequations}
\begin{align}
\Vert \bm{f}\Vert_{H^{\sigma+1}(\Omega;\mathbb{R}^3)}&\lesssim\Vert \bm{f}_0\Vert_{H^{\sigma+1}(\partial\Omega_-;\mathbb{R}^3)},\label{tpest1}\\
\Vert \bm{f}\Vert_{L^2(\Omega;\mathbb{R}^3)}&\lesssim\Vert \bm{f}_0\Vert_{L^2(\partial\Omega_-;\mathbb{R}^3)}.\label{tpest2}
\end{align}
Additionally we have
\begin{equation}\label{tpest3}
\Vert \bm{f}^{(2)}-\bm{f}^{(1)}\Vert_{L^2(\Omega;\mathbb{R}^3)}\lesssim\Vert \bm{f}_0\Vert_{H^{\sigma+1}(\partial\Omega_-;\mathbb{R}^3)}\Vert \bm{v}^{(2)}-\bm{v}^{(1)}\Vert_{H^1(\Omega;\mathbb{R}^3)},
\end{equation}
for two different solutions $\bm{f}^{(1)}$ and $\bm{f}^{(2)}$ corresponding to $\bm{v}^{(1)}$ and $\bm{v}^{(2)}$ respectively.
\end{subequations} Finally, given that $\bm{f}_0\vert_{\partial\partial\Omega_-}\equiv 0$ then $\bm{f}\vert_{\partial\partial\Omega_+}\equiv 0$.
\end{theorem}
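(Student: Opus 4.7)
Proof proposal. The plan is to solve $(\mathcal{TP})$ by the method of characteristics, which is natural because $(\bm{v}\cdot\nabla)\bm{f}$ is a directional derivative along integral curves of $\bm{v}$. For $\gamma_0$ small enough and $\bm{u}\in V_\gamma$, the hypotheses on $\bm{v}_0$ transfer to $\bm{v}=\bm{v}_0+\bm{u}$: by Sobolev embedding $H^{\sigma+2}\hookrightarrow C^{1,\sigma-1/2}$, so $\bm{v}$ is $C^1$-close to $\bm{v}_0$, its speed stays bounded below, and no integral curve is closed or has infinite length. Since $\bm{v}\cdot\bm{n}=\phi$ on $\partial\Omega$, every trajectory enters at $\partial\Omega_-$ and exits at $\partial\Omega_+$ in finite time, while trajectories starting on $\partial\partial\Omega_-$ remain in $\partial\Omega_0$ and terminate at $\partial\partial\Omega_+$. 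Denoting by $\bm{X}(t,\bm{y})$ the flow with $\bm{X}(0,\bm{y})=\bm{y}\in\partial\Omega_-$, the map $(t,\bm{y})\mapsto \bm{X}(t,\bm{y})$ is the sought change of variables between $\Omega$ and $\{(t,\bm{y}):\bm{y}\in\partial\Omega_-,\ 0\le t\le T(\bm{y})\}$, with $T(\bm{y})$ the exit time.

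In these Lagrangian coordinates $(\mathcal{TP})$ reduces to the linear ODE
\[
\dot{\bm F}(t)=M(t)\bm F(t),\qquad \bm F(0)=\bm{f}_0(\bm{y}),
\]
for $\bm F(t)=\bm{f}(\bm{X}(t,\bm{y}))$ and $M(t)_{ij}=\partial_j v_i(\bm{X}(t,\bm{y}))$; equivalently $\bm{f}$ is frozen into the flow and equals $D_{\bm{y}}\bm{X}(t,\bm{y})\bm{f}_0(\bm{y})$, the standard vorticity-transport formula. Pointwise existence and uniqueness then follow from Picard--Lindel\"of. The Sobolev estimate \eqref{tpest1} would be obtained by pushing norms through the flow map, whose $H^{\sigma+2}$-regularity in the spatial parameters is inherited from $\bm{v}$; the required composition and multiplication estimates in fractional Sobolev spaces are exactly what the appendix result is designed to provide. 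The $L^2$ bound \eqref{tpest2} is the analogous estimate at zero regularity. For \eqref{tpest3} I would subtract the two representations $\bm{f}^{(j)}=D\bm{X}^{(j)}\bm{f}_0\circ (\bm{X}^{(j)})^{-1}$, write the difference of flows via Gr\"onwall in terms of $\bm{v}^{(2)}-\bm{v}^{(1)}$, and apply the $L^2$ composition estimate.

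To see $\divv\bm{f}=0$, applying $\divv$ to the equation and using $\divv\bm{v}=0$ cancels the cross terms and yields $(\bm{v}\cdot\nabla)(\divv\bm{f})=0$, so $\divv\bm{f}$ is transported along characteristics. It therefore suffices to verify $\divv\bm{f}=0$ on $\partial\Omega_-$, where the normal derivative of $\bm{f}$ can be eliminated via the equation:
\[
\partial_{\bm{n}}\bm{f}\big|_{\partial\Omega_-}=\frac{1}{\bm{v}\cdot\bm{n}}\bigl[(\bm{f}_0\cdot\nabla)\bm{v}-(\bm{v}_T\cdot\nabla_T)\bm{f}_0\bigr].
\]
This reduces $\divv\bm{f}|_{\partial\Omega_-}$ to an explicit tangential expression in $\bm{f}_0$ and the geometric data of $\partial\Omega_-$, and the choice \eqref{eq:f0defn}--\eqref{eq:f0deft} is designed precisely so that this expression vanishes identically. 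Finally, if $\bm{f}_0|_{\partial\partial\Omega_-}\equiv 0$, the characteristics starting on the edge stay on $\partial\Omega_0$ (since $\bm{v}\cdot\bm{n}=0$ there) and reach $\partial\partial\Omega_+$; along each of them the linear ODE with zero initial data forces $\bm{f}\equiv 0$, giving $\bm{f}|_{\partial\partial\Omega_+}=0$.

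The main obstacle will be the fractional Sobolev bookkeeping at order $\sigma+1$: $\bm{v}\in H^{\sigma+2}$ is only marginally better than $C^1$, so the composition of $\bm{f}_0$ with the inverse flow and multiplication by the flow's Jacobian must be controlled near the sharp corner $\partial\partial\Omega_\pm$, where the trajectories graze the edge. This is where the curvature-line and right-angle hypotheses on $\partial\partial\Omega_\pm$ earn their keep, since together they ensure that the flow map extends with sufficient regularity across the edges to close the $H^{\sigma+1}$ estimates; tracking this corner regularity, and combining it with the compatibility verification above, is the technical heart of the argument.
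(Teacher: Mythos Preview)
Your overall strategy---characteristics, the ODE $\dot{\bm F}=M\bm F$ along trajectories, Gr\"onwall for the $L^2$ estimates \eqref{tpest2} and \eqref{tpest3}, and the transport argument for $\divv\bm{f}=0$---matches the paper. The gap is in the existence and the $H^{\sigma+1}$ estimate \eqref{tpest1}. You propose to write $\bm{f}=D\bm{X}\,\bm{f}_0\circ\bm{X}^{-1}$ and ``push norms through the flow map.'' The paper explicitly avoids this, and in fact remarks that it is not known how to make it work: with $\bm{v}\in H^{\sigma+2}$ one only has $\bm{\Psi},\bm{\Psi}^{-1}\in C^1$, and there is no general composition result guaranteeing that $\bm{h}\circ\bm{\Psi}^{-1}$ stays in $H^{\sigma+1}$ at this regularity (see the remark closing Section~\ref{Sec:TP}). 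The flow-map representation is used only for the $L^2$-level estimates, where $C^1$ regularity of $\bm{\Psi}$ suffices.

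For existence and \eqref{tpest1} the paper takes a different route: it localizes. Each streamline is covered by a finite chain of small balls; in each ball one chooses an orthonormal frame with $\bm{e}_1$ along $\bm{v}$ at the center, so that $v_1>0$ throughout, divides by $v_1$, and obtains a transport equation of the form $\partial_{x_1}\bm{f}+(\bm{v}'\cdot\nabla')\bm{f}+A\bm{f}=\bm{g}$ on a slab $[0,T]\times\mathbb{R}^2$. This is solved by invoking the Bahouri--Chemin--Danchin transport theory (their Theorem~3.19), which yields $\bm{f}\in C([0,T];H^{\sigma+1})$; the appendix proposition is then used to bootstrap to $\bm{f}\in H^{\sigma+1}([0,T]\times\mathbb{R}^2)$. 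The local solutions are propagated from ball to ball and patched by a partition of unity. No global straightening of the flow is needed, and in particular no composition with $\bm{\Psi}^{-1}$ at fractional regularity.

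One further correction: the curvature-line and right-angle hypotheses on $\partial\partial\Omega_\pm$ play \emph{no role} in the transport problem. They enter only in the div-curl problem (Proposition~\ref{Prop:Yspace}), where they are needed to verify the compatibility condition $g_3^{(x)}|_{S_1}=0$ for the elliptic system. The transport argument goes through in the edge neighbourhoods without any special geometric input beyond $\bm{v}\cdot\bm{n}=0$ on $\partial\Omega_0$.
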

The proof of Theorem \ref{Theorem:TP} is given in Section \ref{Sec:TP}.
\begin{theorem}\label{Theorem:DCP} Given $\bm{f}$ in $H^{\sigma+1}(\Omega;\mathbb{R}^3)$ which satisfies $\divv \bm{f}=0$ and $\bm{f}\vert_{\partial\partial\Omega_\pm}\equiv 0$ the $(\mathcal{DCP})$ has a unique solution $\bm{w}\in H^{\sigma+2}(\Omega;\mathbb{R}^3)$ which satisfies
\begin{subequations}
\begin{align}
\Vert \bm{w}\Vert_{H^{\sigma+2}(\Omega;\mathbb{R}^3)}\lesssim \Vert \bm{f}\Vert_{H^{\sigma}(\Omega;\mathbb{R}^3)},\label{dcpest1}\\
\Vert \bm{w}\Vert_{H^1(\Omega;\mathbb{R}^3)}\lesssim \Vert \bm{f}\Vert_{L^2(\Omega;\mathbb{R}^3)}.\label{dcpest2}
\end{align}
\end{subequations}
\end{theorem}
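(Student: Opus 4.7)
The plan is to construct $\bm{w}$ via a vector potential $\bm{A}$ in the Coulomb gauge, reducing $(\mathcal{DCP})$ to three scalar Poisson problems. Specifically, I would look for $\bm{w}=\curl \bm{A}$ with $\divv \bm{A}=0$ in $\Omega$; then $\divv \bm{w}=0$ is automatic, while $\curl \bm{w}=\bm{f}$ reduces via the identity $\curl\curl \bm{A}=\grad\divv \bm{A}-\Delta \bm{A}$ to the vector Poisson equation $-\Delta \bm{A}=\bm{f}$, which is three scalar Poissons for the Cartesian components of $\bm{A}$. The remaining boundary condition $\bm{w}\cdot\bm{n}=0$ can be encoded by imposing $\bm{A}\times\bm{n}=0$ on $\partial\Omega$, since the surface-divergence identity $\bm{n}\cdot\curl \bm{A}=\divv_{\partial\Omega}(\bm{A}\times\bm{n})$ then forces the normal component of $\bm{w}$ to vanish. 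Closing the system by also imposing $\divv \bm{A}=0$ at the boundary yields, for each Cartesian component of $\bm{A}$, a mixed BVP that is of Dirichlet type on the boundary pieces where that component is tangential and of Neumann type where it is normal.

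The main obstacle is solving these Poisson problems on $\Omega$ up to the edges $\partial\partial\Omega_\pm$. Along such an edge the type of boundary condition for a given component of $\bm{A}$ switches between Dirichlet and Neumann as one crosses from one face to the other, and edge regularity for mixed BVPs requires compatibility between the data and the geometry. I expect the data-side compatibility to be provided by the hypothesis $\bm{f}\vert_{\partial\partial\Omega_\pm}\equiv 0$, while the geometric compatibility, which involves the second fundamental forms of the two meeting faces, is supplied by the assumption that $\partial\partial\Omega_\pm$ is a line of curvature: this kills the off-diagonal term of the shape operator associated to the edge direction, so that under a local straightening the two faces behave to the relevant order like pieces of orthogonal planes, which is precisely what the mixed-BVP theory needs for the solution to carry the expected Sobolev regularity across the edge. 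Isolating these compatibility conditions explicitly and invoking the corresponding edge-regularity result for the three Poissons is the technical heart of the argument.

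Once $\bm{A}$ is in hand, the remaining verifications are routine. The three defining properties of $\bm{w}$ in $(\mathcal{DCP})$ follow from the construction and the identities used above, with the gauge condition $\divv \bm{A}=0$ in $\Omega$ obtained either by a preliminary gauge adjustment or by choosing the boundary data so that $\divv \bm{A}$ satisfies a Laplace equation with zero Neumann and Dirichlet traces and hence vanishes. Uniqueness follows because any two solutions differ by a field $\bm{w}'$ with $\curl \bm{w}'=0$, $\divv \bm{w}'=0$ and $\bm{w}'\cdot\bm{n}=0$; since $\Omega$ is simply connected, $\bm{w}'=\grad\psi$ with $\Delta\psi=0$ and $\partial_n\psi=0$, and an integration by parts gives $\bm{w}'=0$. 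The high-regularity estimate \eqref{dcpest1} follows from the elliptic estimate for the three Poisson problems together with the identity $\bm{w}=\curl \bm{A}$, and the low-regularity estimate \eqref{dcpest2} is obtained independently by a direct energy argument, testing the div-curl system against $\bm{w}$, using $\divv \bm{w}=0$ and $\bm{w}\cdot\bm{n}=0$ to integrate by parts, and closing with a Friedrichs-type inequality that controls $\bm{w}$ in $H^1$ by its curl and divergence in $L^2$.
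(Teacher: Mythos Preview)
Your outline is correct and follows essentially the same route as the paper: introduce a vector potential with $\bm{A}\times\bm{n}=0$ and $\divv\bm{A}=0$ on $\partial\Omega$, reduce $-\Delta\bm{A}=\bm{f}$ to three scalar mixed Dirichlet--Neumann problems, and use the edge hypotheses (vanishing of $\bm{f}$ along $\partial\partial\Omega_\pm$ together with the curvature-line condition killing the off-diagonal shape-operator term) to obtain the compatibility needed for edge regularity. Two small remarks: the decomposition into scalar problems cannot literally be done in global Cartesian components unless the faces are flat---the paper works in a local orthonormal frame $\bm{n}_1,\bm{n}_2,\bm{n}_3$ adapted to the edge, which introduces lower-order coupling terms that must be checked against the compatibility conditions (this is where the curvature-line hypothesis actually enters); and for \eqref{dcpest2} the paper goes through an $H^2$ bound on the potential rather than your direct Friedrichs argument on $\bm{w}$, though your route is valid and arguably cleaner.
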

The proof of Theorem \ref{Theorem:DCP} is given in section \ref{Sec:DCP}. By combining the two previous theorems we get the following result.
\begin{lemma} Let $g$, $h$ and $\bm{v}_0$ be given as in in Theorem \ref{Thm:main}. Then the operator $B=B[g,h,\bm{v}_0]:V_\gamma \to V$ is well defined. Moreover we have the following:

(i) For every $\gamma\leq \gamma_0$ there exists a constant $K_1$ such that $B$ maps $V_\gamma$ into itself and $B:V_\gamma\subset H^1(\Omega;\mathbb{R}^3)\to H^1(\Omega;\mathbb{R}^3)$ is a contraction.

(ii) $B$ has a unique fixed point in $V_\gamma$.
\end{lemma}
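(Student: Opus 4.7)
The plan is to chain Theorems~\ref{Theorem:TP} and~\ref{Theorem:DCP} to read off well-definedness of $B$, then use their quantitative estimates to make $B$ a self-mapping strict contraction of $V_\gamma$ in the weaker $H^1$ metric, and finally invoke Banach's fixed point theorem for (ii). The overall philosophy is that the regularity budget in $H^{\sigma+2}$ is spent on the self-mapping bound, while the contraction bound lives in $H^1$, which is natural because $(\mathcal{DCP})$ is linear and the estimates~\eqref{tpest2}--\eqref{tpest3} of Theorem~\ref{Theorem:TP} already trade one Sobolev index for $L^2$.

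For well-definedness I would fix $\bm{u}\in V_\gamma$ with $\gamma\le\gamma_0$ and set $\bm{v}=\bm{v}_0+\bm{u}$. Since $\bm{u}\cdot\bm{n}=0$ on $\partial\Omega$, one has $\bm{v}\cdot\bm{n}=\phi$ on $\partial\Omega_-$, inherited from the base flow, and the inversion in~\eqref{eq:f0deft} is controlled by the corresponding quantity for $\bm{v}_0$. As $\sigma+1>3/2$ the space $H^{\sigma+1}(\partial\Omega_-)$ is a Banach algebra on the two-dimensional boundary closed under inversion of nonvanishing elements, so~\eqref{eq:f0defn}--\eqref{eq:f0deft} place $\bm{f}_0$ in $H^{\sigma+1}(\partial\Omega_-;\mathbb{R}^3)$; the hypothesis~\eqref{eq:ghcond} additionally gives $\bm{f}_0\vert_{\partial\partial\Omega_-}\equiv 0$. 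Theorem~\ref{Theorem:TP} then yields a unique $\bm{f}\in H^{\sigma+1}(\Omega;\mathbb{R}^3)$ satisfying $\divv\bm{f}=0$ and $\bm{f}\vert_{\partial\partial\Omega_\pm}\equiv 0$, which is exactly the input required by Theorem~\ref{Theorem:DCP}; the resulting $\bm{w}\in H^{\sigma+2}(\Omega;\mathbb{R}^3)$ lies in $V$ by construction.

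For the self-mapping part of (i) I would chain~\eqref{dcpest1} and~\eqref{tpest1} and then bound $\|\bm{f}_0\|_{H^{\sigma+1}(\partial\Omega_-)}$ by $\bigl(\|h\|_{H^{\sigma+1}}+\|\nabla_T g\|_{H^{\sigma+1}}\bigr)Q(\bm{v})$ using the algebra property, with $Q(\bm{v})$ uniformly bounded for $\bm{u}\in V_{\gamma_0}$ in terms of $\|\bm{v}_0\|_{H^{\sigma+2}}$, $\gamma_0$, and the infimum of $|\phi|$ on $\partial\Omega_-$. Together with~\eqref{eq:smallbdrydata} this yields $\|\bm{w}\|_{H^{\sigma+2}}\le CK_1$, and choosing $K_1=K_1(\gamma)$ small enough forces $B(V_\gamma)\subset V_\gamma$. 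For the contraction part I would write $\bm{v}^{(i)}=\bm{v}_0+\bm{u}^{(i)}$ and $\bm{w}^{(i)}=B(\bm{u}^{(i)})$; linearity of $(\mathcal{DCP})$ and~\eqref{dcpest2} reduce the task to estimating $\|\bm{f}^{(1)}-\bm{f}^{(2)}\|_{L^2}$. The standard trick is to insert the auxiliary $\tilde{\bm{f}}$ solving $(\mathcal{TP})$ with velocity $\bm{v}^{(2)}$ and data $\bm{f}_0^{(1)}$: estimate~\eqref{tpest3} controls $\|\bm{f}^{(1)}-\tilde{\bm{f}}\|_{L^2}$ by $\|\bm{f}_0^{(1)}\|_{H^{\sigma+1}}\|\bm{v}^{(1)}-\bm{v}^{(2)}\|_{H^1}\lesssim K_1\|\bm{u}^{(1)}-\bm{u}^{(2)}\|_{H^1}$, while~\eqref{tpest2} applied to the linear difference controls $\|\tilde{\bm{f}}-\bm{f}^{(2)}\|_{L^2}$ by $\|\bm{f}_0^{(1)}-\bm{f}_0^{(2)}\|_{L^2(\partial\Omega_-)}$. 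A pointwise Lipschitz calculation from~\eqref{eq:f0deft}, combined with the Sobolev embedding $H^{\sigma+1}\hookrightarrow L^\infty$ on the boundary and the trace inequality $\|\cdot\|_{L^2(\partial\Omega_-)}\lesssim\|\cdot\|_{H^1(\Omega)}$, shows that the latter is also bounded by $K_1\|\bm{u}^{(1)}-\bm{u}^{(2)}\|_{H^1}$. Assembling the pieces gives $\|B(\bm{u}^{(1)})-B(\bm{u}^{(2)})\|_{H^1}\le CK_1\|\bm{u}^{(1)}-\bm{u}^{(2)}\|_{H^1}$, and shrinking $K_1$ once more makes $CK_1<1$.

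For (ii), the ball $V_\gamma$ is bounded in $H^{\sigma+2}$ and hence closed in the weaker $H^1$-topology: any $H^1$-Cauchy sequence in $V_\gamma$ is uniformly bounded in $H^{\sigma+2}$ and admits a subsequence converging weakly there, to a limit which coincides with the $H^1$-limit and whose $H^{\sigma+2}$-norm is still $\le\gamma$. Thus $(V_\gamma,\|\cdot\|_{H^1})$ is a complete metric space on which $B$ is a strict contraction into itself, and Banach's fixed point theorem yields a unique fixed point. The main obstacle I anticipate is the contraction step: the dependence of $(\mathcal{TP})$ on $\bm{v}$ is nonlinear, so one cannot simply subtract equations; the two-step splitting through $\tilde{\bm{f}}$, which decouples the coefficient dependence (handled by~\eqref{tpest3}) from the data dependence (handled by~\eqref{tpest2}), is the mechanism that allows the contraction to close in a norm weaker than the one in which $B$ is defined, which is in turn essential for compatibility with the self-mapping bound.
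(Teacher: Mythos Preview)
Your approach is essentially the paper's: chain Theorems~\ref{Theorem:TP} and~\ref{Theorem:DCP} for well-definedness, combine \eqref{tpest1} with \eqref{dcpest1} for the self-mapping bound, combine \eqref{tpest3} with \eqref{dcpest2} for the contraction in $H^1$, and then use weak compactness of $V_\gamma$ in $H^{\sigma+2}$ to conclude it is closed in $H^1$ so that Banach's fixed point theorem applies.

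The one place you deviate is the contraction step. The paper simply applies~\eqref{tpest3} as though the boundary datum $\bm{f}_0$ were the same for $\bm{u}^{(1)}$ and $\bm{u}^{(2)}$, writing $\Vert \bm{f}^{(1)}-\bm{f}^{(2)}\Vert_{L^2}\lesssim\Vert\bm{f}_0\Vert_{H^{\sigma+1}}\Vert\bm{u}^{(1)}-\bm{u}^{(2)}\Vert_{H^1}$ directly. You notice that $\bm{f}_0$ depends on $\bm{v}_T$ and therefore insert an intermediate $\tilde{\bm{f}}$ (solution of $(\mathcal{TP})$ with velocity $\bm{v}^{(2)}$ and data $\bm{f}_0^{(1)}$), splitting the difference into a coefficient part handled by~\eqref{tpest3} and a data part handled by~\eqref{tpest2} together with a Lipschitz bound for~\eqref{eq:f0deft}. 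This is the right way to close the gap that the paper's write-up leaves implicit; since $\bm{v}\cdot\bm{n}=\phi$ is independent of $\bm{u}$ and only the linear factor $\bm{v}_T$ varies, your Lipschitz estimate $\Vert\bm{f}_0^{(1)}-\bm{f}_0^{(2)}\Vert_{L^2(\partial\Omega_-)}\lesssim K_1\Vert\bm{u}^{(1)}-\bm{u}^{(2)}\Vert_{H^1}$ goes through via the trace inequality, and the argument closes with the same smallness of $K_1$.
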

\begin{proof} That $B:V_\gamma\to V$ is well defined follows directly from the theorems. Since $g\in H^{\sigma+2}(\partial\Omega_-;\mathbb{R}^3)$ and $h\in H^{\sigma+1}(\partial\Omega_-;\mathbb{R}^3)$ satisfying $\eqref{eq:ghcond}$ equations \eqref{eq:f0defn} and \eqref{eq:f0deft} give $\bm{f}\in H^{\sigma+1}(\partial\Omega_-;\mathbb{R}^3)$, which satisfies $\bm{f}_0\vert_{\partial\partial\Omega_-}\equiv 0$. Thus by Theorem \ref{Theorem:TP} we get a unique function $\bm{f}$ that satisfies the hypothesis of Theorem \ref{Theorem:DCP}, which in turn gives us a unique function $\bm{w}\in V$ that satisfies $\curl \bm{w}=\bm{f}$.

To prove part $(i)$ of the lemma we note that combining the inequalities \eqref{tpest1} and \eqref{dcpest1} gives us that
\[
\Vert B[g,h,v_0](\bm{u})\Vert_{H^{\sigma+2}(\Omega;\mathbb{R}^3)}\lesssim\Vert \bm{f}_0\Vert_{H^{\sigma+1}(\partial\Omega_-;\mathbb{R}^3)}.
\]
Furthermore, by the definition of $\bm{f}_0$, we know that
\[
\bm{f}_0=(\bm{f}_0\cdot\bm{n})\bm{n}+{\bm{f}_0}_T=h\bm{n}+\frac{h}{\bm{v}\cdot\bm{n}}\bm{v}_T-\frac{1}{\bm{v}\cdot\bm{n}}\bm{n}\times\nabla_Tg.
\]
Thus 
\[
\Vert\bm{f}_0 \Vert_{H^{\sigma+1}(\partial\Omega_-;\mathbb{R}^3)}\lesssim
\Vert h\Vert_{H^{\sigma+1}(\partial\Omega_-;\mathbb{R})}+\Vert \nabla_T g\Vert_{H^{\sigma+1}(\partial\Omega_-;\mathbb{R}^3)},
\]
which means there exists a constant $C_1$ such that
\begin{equation}\label{eq:opBest1}
\Vert B[g,h,v_0](\bm{u})\Vert_{H^{\sigma+2}(\Omega;\mathbb{R}^3)}\leq C_1(\Vert h\Vert_{H^{\sigma+1}(\partial\Omega_-;\mathbb{R})}+\Vert \nabla_T g\Vert_{H^{\sigma+1}(\partial\Omega_-;\mathbb{R}^3)}).
\end{equation}
If we choose $K_1$ in small enough for $C_1K_1\leq \gamma$, then we get that $B$ maps $V_\gamma$ into itself.

Since the $(\mathcal{DCP})$ is a linear problem the estimate in \eqref{dcpest2} gives us
\[
\Vert \bm{w}^{(2)}-\bm{w}^{(1)} \Vert_{H^1(\Omega;\mathbb{R}^3)}\lesssim \Vert \bm{f}^{(2)}-\bm{f}^{(1)}\Vert_{L^2(\Omega;\mathbb{R}^3)},
\]
for two different solutions $\bm{w}^{(1)}$ and $\bm{w}^{(2)}$ corresponding to two different functions $\bm{f}^{(1)}$ and $\bm{f}^{(2)}$.
Together with the estimate \eqref{tpest3} we get, for two different $\bm{u}^{(1)}, \bm{u}^{(2)}\in V_\gamma$,
\begin{equation*}
\Vert B[g,h,\bm{v}_0](\bm{u}^{(2)})-B[g,h,\bm{v}_0](\bm{u}^{(1)})\Vert_{H^1(\Omega;\mathbb{R}^3)}\lesssim
\Vert \bm{f}_0\Vert_{H^s(\partial\Omega_-;\mathbb{R}^3)}\Vert \bm{u}^{(2)}-\bm{u}^{(1)}\Vert_{H^1(\Omega;\mathbb{R}^3)}
\end{equation*}
because $\bm{v}^{(2)}-\bm{v}^{(1)}=\bm{u}^{(2)}-\bm{u}^{(1)}$. Using our estimate for $\Vert\bm{f}_0 \Vert_{H^{\sigma+1}(\partial\Omega_-;\mathbb{R}^3)}$ gives us that there exists a constant $C_2$ such that
\begin{equation}\label{eq:opBest2}
\begin{aligned}
\Vert B[g,h,\bm{v}_0](\bm{u}^{(2)})-B[g,h,\bm{v}_0](\bm{u}^{(1)})\Vert_{H^1(\Omega;\mathbb{R}^3)}&\leq
\\ 
C_2(\Vert h\Vert_{H^{\sigma+1}(\partial\Omega_-;\mathbb{R})}&+\Vert \nabla_T g\Vert_{H^{\sigma+1}(\partial\Omega_-;\mathbb{R}^3)})\Vert \bm{u}^{(2)}-\bm{u}^{(1)}\Vert_{H^1(\Omega;\mathbb{R}^3)}.
\end{aligned}
\end{equation}
From this it follows that if we in addition choose $K_1$ so that $C_2K_1<1$ then $B:V_\gamma\subset H^1(\Omega;\mathbb{R}^3)\to H^1(\Omega;\mathbb{R}^3)$ is a contraction.

For part $(ii)$ we use that $B:V_\gamma\subset H^1(\Omega;\mathbb{R}^3)\to H^1(\Omega;\mathbb{R}^3)$ is a contraction. By the Banach fixed-point theorem, iterating $B$ will give us a sequence which converges to a unique fixed point. What remains to show is that $V_\gamma$ is closed in $H^1$ ensuring that our fixed point lies in $V_\gamma$. This can be done by employing the same technique used in \cite{Alber1992}. Any sequence in $V_\gamma$ has a weakly convergent subsequence in $H^{\sigma+2}(\Omega;\mathbb{R}^3)$. The subsequence clearly has the same weak limit in $H^1(\Omega;\mathbb{R}^3)$. If it also is convergent in $H^1(\Omega;\mathbb{R}^3)$, then the weak limit and the limit are the same. Thus the limit is a function in $H^{\sigma+2}(\Omega;\mathbb{R}^3)$. The other conditions follow by continuity of the $\divv$ and trace operators on $H^1(\Omega;\mathbb{R}^3)$.
\end{proof}
What remains to prove is that $(\bm{v},p)$ is a solution to $(\mathcal{P})$ if and only if $\bm{u}=\bm{v}-\bm{v}_0$ is a fixed point of $B$, and the estimates \eqref{eq:contdepvelocity} and \eqref{eq:contdeppressure}. To prove the former we need the following lemma, which is very reminiscent of Lemma 2.1 in \cite{Alber1992} and can be proven in the same way.

\begin{lemma}\label{Lemma:IntegralCurves} Let $\bm{v}_0$ satisfy the hypothesis of Theorem \ref{Thm:main}. Then there exist constants  $C$ and $\gamma_0$ such that the following three properties hold:

(i) For any $\bm{v}=\bm{v}_0+\bm{u}$, where $\bm{u}\in V_{\gamma_0}$
\[
\underline{\bm{v}}>\underline{\bm{v}_0}-C\gamma_0>0.
\]

(ii) Let $0<\gamma\leq \gamma_0$. Then no vector field $\bm{v}\in \bm{v}_0+V_\gamma$ has closed integral curves. Furthermore if $L_\gamma$ denotes the least upper bound of all integral curves to all vector fields in $\bm{v}_0+V_\gamma$ then $L_\gamma<\infty$ and
\[
\lim_{\gamma\to 0}L_\gamma=L_0.
\] 

(iii) If an integral curve to a vector field in $\bm{v}_0+V_\gamma$ is tangential to the boundary at any point then it is completely contained in the boundary. 
\end{lemma}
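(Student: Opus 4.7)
The plan is to follow closely Alber's proof of \cite[Lemma 2.1]{Alber1992}. The fundamental observation is the Sobolev embedding $H^{\sigma+2}(\Omega;\mathbb{R}^3)\hookrightarrow C^1(\overline{\Omega};\mathbb{R}^3)$, valid since $\sigma+2>5/2$, which yields $\Vert \bm{u}\Vert_{C^1(\overline{\Omega})}\lesssim \gamma$ for every $\bm{u}\in V_\gamma$. Consequently $\bm{v}=\bm{v}_0+\bm{u}$ is $C^1$-close to $\bm{v}_0$ and, in particular, admits a unique integral curve through each point.

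Part $(i)$ is then immediate by the triangle inequality: $|\bm{v}(x)|\geq |\bm{v}_0(x)|-|\bm{u}(x)|\geq \underline{\bm{v}_0}-C\gamma_0$, and I would pick $\gamma_0$ small enough that the right-hand side is strictly positive.

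For part $(ii)$ I would use that $\bm{v}_0$ is irrotational on the simply connected domain $\Omega$, so there exists a scalar potential $\phi_0$ with $\bm{v}_0=\nabla\phi_0$. Along an integral curve $x(t)$ of $\bm{v}$ I would compute
\[
\frac{d}{dt}\phi_0(x(t))=\bm{v}_0\cdot\bm{v}=|\bm{v}_0|^2+\bm{v}_0\cdot\bm{u}\geq \underline{\bm{v}_0}^{\,2}-\overline{\bm{v}_0}\,C\gamma,
\]
which is strictly positive for $\gamma_0$ sufficiently small. Thus $\phi_0$ is strictly monotone along every integral curve, precluding closed orbits. Since $\phi_0$ is bounded on $\overline{\Omega}$, the elapsed time on any integral curve is bounded, and combined with the speed bound $|\bm{v}|\leq \overline{\bm{v}_0}+C\gamma$ this gives $L_\gamma<\infty$. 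The convergence $L_\gamma\to L_0$ as $\gamma\to 0$ would follow from a standard Gronwall-type continuous-dependence argument, comparing integral curves of $\bm{v}$ and $\bm{v}_0$ starting from the same point, supplemented by a compactness argument over initial points to upgrade pointwise convergence of lengths to convergence of the supremum.

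For part $(iii)$, at a tangent point $p$ we have $\bm{v}(p)\cdot\bm{n}(p)=0$, i.e.\ $\phi(p)=0$, so $p\in\overline{\partial\Omega_0}$ by the sign hypothesis on $\phi$. On $\partial\Omega_0$ one has $\bm{v}\cdot\bm{n}\equiv 0$, making $\bm{v}$ tangent to $\partial\Omega_0$ everywhere and $\overline{\partial\Omega_0}$ an invariant set for the flow; uniqueness of ODE solutions, available due to the $C^1$ regularity of $\bm{v}$, then forces the whole integral curve to remain in it. The main technical obstacle I anticipate is the convergence $L_\gamma\to L_0$, which requires a careful continuous-dependence argument together with control of the exit time onto $\partial\Omega_+$; the remaining steps become elementary once the $C^1$-smallness of $\bm{u}$ has been established.
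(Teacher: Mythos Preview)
Your proposal is correct and follows precisely the route the paper indicates: the paper does not give its own proof but states that the lemma ``is very reminiscent of Lemma 2.1 in \cite{Alber1992} and can be proven in the same way,'' which is exactly what you do via the Sobolev embedding $H^{\sigma+2}\hookrightarrow C^1$, the potential $\phi_0$ as a strict Lyapunov function along integral curves, and the invariance of $\partial\Omega_0$. Your identification of the convergence $L_\gamma\to L_0$ as the only step requiring genuine care is also accurate and in line with Alber's argument.
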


\begin{remark} The constant $\gamma_0=\gamma_0(\bm{v}_0)$ in this lemma is the same as the one in Theorem \ref{Theorem:TP}.
\end{remark}

The consequence of this lemma is that the integral curves of any vector field $\bm{v}=\bm{v}_0+\bm{u}$ covers all of $\Omega$ and that they all intersect $\partial\Omega_-$ in exactly one point and $\partial\Omega_+$ in exactly one point. Given any point in $\Omega$, we can follow the integral curve of $\bm{v}$ from that point. It will not reach a stagnation point by part $(i)$ neither will it return to the same point by part $(ii)$. Furthermore by part $(ii)$ the integral curve has finite length so it will eventually reach $\partial\Omega_+$. The same is true if we follow the integral curve backwards except that we eventually reach $\partial\Omega_-$ instead.

With the help of this lemma we can prove that $(\bm{v},p)$ is a solution to $(\mathcal{P})$ if and only if $\bm{u}=\bm{v}-\bm{v_0}$ is a fixed point of $B$ using the same method used to prove Lemma 2.6 in \cite{Alber1992} and the estimates in \eqref{eq:contdepvelocity} and \eqref{eq:contdeppressure} can be proved in a similar manner to how the corresponding estimates are shown in the proof of Theorem 1.1 in the same article.

\section{Div-Curl Problem}\label{Sec:DCP}
This entire section is the proof of Theorem \ref{Theorem:DCP}.
\subsection{Formulation of the Problem}
The aim is to find a solution, $\bm{w}\in H^{\sigma+2}(\Omega;\mathbb{R}^3)$, to the $(\mathcal{DCP})$ given $\bm{f}\in H^{\sigma+1}(\Omega;\mathbb{R}^3)$, the solution to $(\mathcal{TP})$ from Theorem \ref{Theorem:TP}. For this purpose we assume that a solution is already known and introduce a vector potential $\bm{u}$. We can do this by applying Theorem 3.17 in \cite{AmroucheBernardiDaugeEtAl1998}. The vector potential satisfies $\curl \bm{u}=-\bm{w}$, $\divv \bm{u}=0$ in $\Omega$, and $\bm{u}\times\bm{n}=0$ at $\partial\Omega$. This means that $\Delta\bm{u}=\bm{f}$ and, because $\divv \bm{f}=0$, the condition $\divv \bm{u}=0$ in $\Omega$ is equivalent to the same on $\partial\Omega$. Hence, instead of the $(\mathcal{DCP})$ in its original form, we consider the problem
\begin{subequations}\label{orig:prob}
\begin{align}
\Delta \bm{u}&=\bm{f} &&\inn \Omega, \label{orig:eq}
\\
\nabla\cdot \bm{u}&=0 && \onn \partial\Omega, \label{orig:BC1}
\\
\bm{u}\cdot\bm{\tau}^{(i)}_j&=0 &&\onn \partial\Omega_i, \qquad \text{ for } i\in\{+,-,0\}\text{ and } j\in\{1,2\}.\label{orig:BC2}
\end{align}
\end{subequations}
To obtain $\bm{w}$ of the required regularity we seek a solution $\bm{u}\in H^{\sigma+3}(\Omega;\mathbb{R}^3)$. Here $\bm{\tau}^{(i)}_1$ and $\bm{\tau}^{(i)}_2$ denote two vector valued functions defined on $\partial\Omega_i$, which are tangent to $\partial\Omega_i$ and linearly independent at every point.
\subsection{Auxiliary Results}
We first consider some auxiliary problems in the domain $D\subset\mathbb{R}^3$ given by
\[
D=\{(x_1,x_2,x_3): 0<x_1<d, 0<x_2<d\}.
\]
The boundary consists of four open faces $\Gamma_i$, $i\in\{1,2,3,4\}$ given by
\[
\begin{aligned}
\Gamma_1&=\{(x_1,x_2,x_3): x_1=0, 0<x_2<d\}\\
\Gamma_2&=\{(x_1,x_2,x_3): 0<x_1<d, x_2=0\}\\
\Gamma_3&=\{(x_1,x_2,x_3): x_1=d, 0<x_2<d\}\\
\Gamma_4&=\{(x_1,x_2,x_3): 0<x_1<d, x_2=d\},
\end{aligned}
\]
where we set $\Gamma_{i+4}=\Gamma_i$, and the edges $S_i:=\bar\Gamma_i\cap\bar\Gamma_{i+1}$ for $i\in\{1,2,3,4\}$. In this domain we are interested in Poisson's equation with either Dirichlet or Neumann boundary conditions on the different faces. For this reason we define $B_i$ to denote either the identity operator or the derivative in the normal direction of $\Gamma_i$. 
With this notation we can express the problem as
\begin{equation}\label{DirNeuNonhom}
\begin{aligned}
\Delta u&=f&&\qquad \inn D,\\
B_iu&=\zeta_i&&\qquad \onn \Gamma_i,\;i=1,2,3,4.
\end{aligned}
\end{equation}
To formulate the result about the solvability of this problem we introduce
\[
J_i=\begin{cases} 1 \text{ if $B_i$ represents the identity operator,}\\
0 \text{ if $B_i$ represents the derivative in the normal direction of $\Gamma_i$,}
\end{cases}
\]
and $K_i=J_i+J_{i+1}$. The result is given in the following proposition.
\begin{proposition}\label{Prop:DirNeu}Let $m\in \mathbb{Z}$ with $-1\leq m\leq 1$. If $\sum_{i=1}^4 J_i>0$, and $\zeta_i\in H^{m+\sigma+J_i+\frac{1}{2}}(\Gamma_i;\mathbb{R})$, $i\in \{1,2,3,4\}$, and $f\in H^{m+\sigma}(D;\mathbb{R})$ satisfy
\begin{align}
B_{i+1}\zeta_i&=B_i\zeta_{i+1} && \onn S_i,\, i\in\{1,2,3,4\},\text{ if }m+K_i\geq 1, \label{compcondb}
\\
f&=0 && \onn S_i,\, i\in\{1,2,3,4\}, \text{ if }K_i=2\text{ and }m=1.\label{compcondc}
\end{align} 
Then the problem \eqref{DirNeuNonhom} has a unique solution $u\in H^{m+\sigma+2}(D)$. Moreover the solution satisfies the estimate
\[
\Vert u\Vert_{H^{m+\sigma+2}(D;\mathbb{R})}\lesssim \left(\Vert f\Vert_{H^{m+\sigma}(D;\mathbb{R})}+\sum_{i=1}^4\Vert \zeta_i \Vert_{H^{m+\sigma+J_i+\frac{1}{2}}(\Gamma_i;\mathbb{R})}\right).
\]
\end{proposition}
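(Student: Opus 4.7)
The plan is to exploit the product structure $D=Q\times\mathbb{R}$ with $Q=(0,d)^2$, reduce the problem via partial Fourier transform in $x_3$ to a one-parameter family of mixed Dirichlet/Neumann Poisson problems on the square cross-section $Q$, and then invoke Grisvard-type regularity results for polygons with right-angle corners.

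First I would reduce to homogeneous boundary data. For each face $\Gamma_i$ I would construct a lift $u_i\in H^{m+\sigma+2}(D)$ satisfying $B_iu_i=\zeta_i$ on $\Gamma_i$ and with trivial contribution on the other three faces, assembled so that along each edge $S_i$ the traces of $u_i$ and $u_{i+1}$ match. The edge compatibility condition \eqref{compcondb} (active precisely when $m+K_i\geq 1$, so the relevant trace actually makes sense) is exactly what is needed for such a simultaneous lift to exist with the required regularity. Subtracting $\sum_i u_i$ from $u$ reduces matters to
\[
\Delta u=\tilde f \inn D,\qquad B_iu=0 \onn \Gamma_i,\;i=1,\dots,4,
\]
with $\tilde f\in H^{m+\sigma}(D)$; moreover \eqref{compcondc} transfers to a vanishing condition on $\tilde f$ at the edges where $K_i=2$ and $m=1$.

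Next I would apply the partial Fourier transform $\mathcal F_3$ in $x_3$. For a.e.\ $\xi_3\in\mathbb{R}$ the problem becomes the parametrised 2D problem
\[
(\Delta_{x_1,x_2}-\xi_3^2)\hat u(\cdot,\xi_3)=\hat{\tilde f}(\cdot,\xi_3) \inn Q,\qquad B_i\hat u=0 \text{ on the four sides of }Q,
\]
which is a mixed Dirichlet/Neumann Poisson-type problem in a square with $\pi/2$ corners. Standard 2D theory (Grisvard \cite{Grisvard1992,Grisvard2011}) yields existence, uniqueness and an estimate controlling $\hat u(\cdot,\xi_3)$ in $H^{m+\sigma+2}(Q)$, together with $\xi_3$-weighted control of the lower-order Sobolev norms on $Q$, uniformly in $\xi_3$. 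Plancherel's theorem then turns these parametrised estimates into the claimed $H^{m+\sigma+2}(D)$-bound, and the inverse Fourier transform produces $u$. Uniqueness follows from an energy identity: if $f$ and all $\zeta_i$ vanish, integration by parts gives $\int_D|\nabla u|^2=0$, so $u$ is constant, and the hypothesis $\sum J_i>0$, which guarantees at least one Dirichlet face, forces $u\equiv 0$.

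The main obstacle is Step 3 in the top-regularity case $m=1$ together with a Dirichlet-Dirichlet corner ($K_i=2$). A right angle with Dirichlet data on both sides is exactly the configuration where Grisvard's singular-function expansion can produce a non-smooth term of the form $r^2\log r\cos(2\theta)$ coming from the inhomogeneity at the corner. Verifying that the condition $f=0$ on $S_i$ imposed by \eqref{compcondc} removes exactly this obstruction, uniformly in the Fourier parameter $\xi_3$ (so that the Helmholtz perturbation $\xi_3^2$ does not spoil the estimate as $|\xi_3|\to\infty$), is where the bulk of the technical work will lie.
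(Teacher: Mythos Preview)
Your approach is sound but takes a genuinely different route from the paper. The paper does not carry out the partial Fourier reduction by hand: instead it cites Grisvard's Theorem~2.5.11 in \cite{Grisvard1992} directly for the three-dimensional dihedral-type domain $D$ to obtain $H^2$ solutions of the homogeneous problem, then upgrades to fractional regularity via the extension operator of \cite[Theorem~1.4.3.1]{Grisvard2011} and interpolation (\cite[Section~2.4.1]{Triebel1978}), and finally handles the inhomogeneous boundary data via the trace results of \cite[Theorem~6.9 and Corollary~6.10]{BernadiDaugeMaday2007}. In other words, the paper treats the proposition as essentially a literature result and spends one paragraph assembling the citations.

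Your plan---lift to homogeneous boundary data, Fourier transform in $x_3$, apply Grisvard's two-dimensional polygon theory on $Q=(0,d)^2$, reassemble via Plancherel---is the natural ``by hand'' proof and is in fact how results of Grisvard's type for dihedra are typically established. The tradeoff: the paper's route is shorter to write but opaque, while yours is self-contained but requires you to actually produce the parameter-uniform estimates for $-\Delta_{x_1,x_2}+\xi_3^2$ on $Q$ with the correct $\xi_3$-weighted norms (so that after Plancherel you recover the full $H^{m+\sigma+2}(D)$ norm, not just $L^2_{x_3}H^{m+\sigma+2}_{x_1,x_2}$). You have correctly identified the one delicate point---the Dirichlet--Dirichlet corner at $m=1$ and the role of \eqref{compcondc} in killing the $r^2\log r$ singular function---so the outline is complete; just be aware that the uniform-in-$\xi_3$ step is where most of the actual writing will go if you carry this out, whereas the paper simply absorbs it into the cited theorem.
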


The proof of this proposition relies on a similar result concerning the corresponding homogeneous problems. This result is given by Theorem 2.5.11 in \cite{Grisvard1992} and yields $H^2(D;\mathbb{R})$ solutions to the homogeneous problems. By the extention result given by Theorem 1.4.3.1 in \cite{Grisvard2011} and by standard interpolation results see e.g. Section 2.4.1 in \cite{Triebel1978} we can extend this to Sobolev spaces of fractional regularity. Finally with the trace results in Theorem 6.9 and Corollary 6.10 in \cite{BernadiDaugeMaday2007} we can extend this to the nonhomogeneous results given above.

\subsection{Local Change of Variables}
The way we solve problem \eqref{orig:prob} is through a localization argument. $\Omega$ is covered by open neighbourhoods and with a partition of unity we replace the original problem with a local problem in each neighbourhood. Problems in the interior neighbourhoods and neighbourhoods that only intersect one of the boundary pieces can be treated in the same way as the problem for a smooth domain. Because of this we only treat the neighbourhoods that intersect an edge below.

To this end we turn our attention to one of the edges where $\partial\Omega_0$ and either $\partial\Omega_+$ or $\partial\Omega_-$ meet. To simplify notation below we relabel the sides that meet and call them $\partial\Omega_1$ and $\partial\Omega_2$. We also assume that a point on the edge is the origin in our coordinates $\bm{y}=(y_1,y_2,y_3)\in\mathbb{R}^3$. Moreover we assume that the unit vectors $\bm{e}_i$ are normal to $\partial\Omega_i$ for $i=1,2$ at the origin and $\bm{e}_3$ is tangent to the edge at the origin.

Let $\omega$ be some (sufficiently small) neighbourhood of the origin. We define a change of variables
\[
(x_1,x_2,x_3)=\bm{x}=\bm{\Psi}(\bm{y})
\]
which takes the origin to the origin, $\Omega\cap \omega$ to a neighbourhood of the origin, $\tilde{\omega}$ in $M=\{\bm{x}\in\mathbb{R}^3| 0<x_1,0<x_2\}$ and $\partial\Omega_i\cap \omega$ to $\tilde{\omega}\cap\partial M_i$, where $\partial M_i=\{\bm{x}\in \mathbb{R}^3|x_i=0\}$. Moreover let $\bm{\Psi}$ be defined in such a way that $\bm{\Psi}'(0)=\bm{I}$, and $\omega$ is chosen so small that $\det \bm{\Psi}'(\bm{y})>\frac{1}{2}$ for all $\bm{y}\in \omega$.

Let $\tilde{\rho}$ be a function which satisfies $\tilde{\rho}\equiv 1$ on $[-1/2,1/2]$ and $\tilde{\rho}\equiv 0$ outside $[-1,1]$. Set $\rho(\bm{y})=\tilde{\rho}(y_1)\tilde{\rho}(y_2)\tilde{\rho}(y_3)$ and define
\[
\bm{\Phi}(\bm{y})=\rho(\bm{y}/\epsilon)\bm{\Psi}(\bm{y})+(1-\rho(\bm{y}/\epsilon))\bm{y},
\]
for some $\epsilon$ to be chosen later, but small enough for $[-\epsilon,\epsilon]^3\subset \omega$, so $\bm{\Phi}$ can be extended to $\mathbb{R}^3$ by extending $\bm{\Psi}$ to $\mathbb{R}^3$. In the following we let $\nabla_{\bm{y}}=(\partial_{y_1},\partial_{y_2},\partial_{y_3})$, $\Delta_{\bm{y}}=\partial_{y_1}^2+\partial_{y_2}^2+\partial_{y_3}^2$ and the same with $\bm{y}$ replaced by $\bm{x}$.

Let $\eta$ be a smooth cutoff function, with support contained in $[-\epsilon/2,\epsilon/2]^3$, defined in such a way that $\eta\equiv 1$ in some open subset of $\omega$ containing the origin. If we have a regular enough solution $\bm{u}$ of \eqref{orig:prob} then $\bm{\tilde{u}}=\eta\bm{u}$ will satisfy
\begin{subequations}\label{cutoff:prob}
\begin{align}
\Delta_{\bm{y}} \bm{\tilde{u}}&=\bm{\tilde{f}}+2(\nabla_{\bm{y}}\eta\cdot \nabla_{\bm{y}})\bm{u}+ \bm{u}\Delta_{\bm{y}}\eta &&\inn \omega\cap\Omega, \label{cutoff:eq}
\\
\nabla_{\bm{y}}\cdot \bm{\tilde{u}}&=-\bm{u}\cdot\nabla_{\bm{y}}\eta &&\onn \omega\cap\partial\Omega_i, \qquad \text{ for } i\in\{1,2\}, \label{cutoff:BC1}
\\
\bm{\tilde{u}}\cdot\bm{\tau}^{(i)}_j&=0 &&\onn \omega\cap\partial\Omega_i, \qquad \text{ for } i\in\{1,2\}\text{ and } j\in\{1,2\},\label{cutoff:BC2}
\\
\bm{\tilde{u}}&=0 && \onn \partial\omega\setminus \partial\Omega_1\cup\partial\Omega_2.\label{cutoff:BC3}
\end{align}
\end{subequations}
for $\bm{\tilde{f}}=\bm{f}\eta$.

We define the two inward unit normal vector fields $\bm{n}_i$ on $\partial\Omega_i$, $i\in\{ 1, 2\}$ and then extend them to $\omega$ in such a way that $\bm{n}_1\cdot \bm{n}_2 = 0$. Note that this is possible since $\bm{n}_1\cdot \bm{n}_2 = 0$ at the edge by assumption. We also set $\bm{n}_3 = \bm{n}_1 \times \bm{n}_2$. This makes $\bm{n}_3$ tangent to $\partial\Omega_i$, $i\in\{1,2\}$, and in particular tangent to the edge where $\partial\Omega_1$ and $\partial\Omega_2$ meet.
Defining $\tilde{u}_i=\bm{\tilde{u}}\cdot\bm{n}_i$, $i\in\{1,2,3\}$, we can rewrite \eqref{cutoff:prob} as three coupled problems for the components $\tilde{u}_i$, $i\in\{1,2,3\}$. If we extend everything by zero to $T=\bm{\Phi}^{-1}(D)$, with boundary composed of $R_i=\bm{\Phi}^{-1}(\Gamma_i)$, $i\in\{1,2,3,4\}$, and set
\begin{align*}
g_k&=(\bm{\tilde{f}}+2(\nabla_{\bm{y}}\eta\cdot\nabla_{\bm{y}})\bm{u}+ \bm{u}\Delta_{\bm{y}}\eta)\cdot \bm{n}_k+2\sum_i\partial_{y_i}\bm{\tilde{u}}\cdot\partial_{y_i}\bm{n}_k+\bm{\tilde{u}}\cdot\Delta_{\bm{y}} \bm{n}_k,
\\
h_i&=-\vert\nabla_{\bm{y}}{\Phi}_i\vert(\tilde{u}_i\nabla_{\bm{y}} \cdot \bm{n}_i+\bm{u}\cdot\nabla_{\bm{y}}\eta),
\end{align*}
then we get

\begin{subequations}\label{loc1:prob}
\begin{align}
\Delta_{\bm{y}} \tilde{u}_1&=g_1&&\inn T, \label{loc1:eq}
\\
\nabla_{\bm{y}}{\Phi}_1\cdot\nabla_{\bm{y}} \tilde{u}_1&=h_1 &&\onn R_1 \label{loc1:BC1}
\\
\tilde{u}_1&=0 &&\onn R_i,\qquad\text{for } i\in\{2,3,4\},\label{loc1:BC2} 
\end{align}
\end{subequations}
\begin{subequations}\label{loc2:prob}
\begin{align}
\Delta_{\bm{y}}\tilde{u}_2&=g_2 &&\inn T, \label{loc2:eq}
\\
\tilde{u}_2&=0 &&\onn R_i,\qquad\text{for } i\in\{1,3,4\},\label{loc2:BC1}
\\
\nabla_{\bm{y}}{\Phi}_2\cdot \nabla_{\bm{y}} \tilde{u}_2&=h_2 &&\onn R_2, \label{loc2:BC2}
\end{align}
\end{subequations}
\begin{subequations}\label{loc3:prob}
\begin{align}
\Delta_{\bm{y}} \tilde{u}_3&=g_3 &&\inn T, \label{loc3:eq}
\\
\tilde{u}_3&=0 &&\onn R_i, \qquad\text{for } i\in\{1,2,3,4\}.\label{loc3:BC1}
\end{align}
\end{subequations}
Here we have used the fact that $\nabla_{\bm{y}}\Phi_i$ is parallel to $\bm{n}_i$ on the boundary $\partial\Omega_i$ for $i\in\{1,2\}$. We apply our change of variables $\bm{\Phi}$ to \eqref{loc1:prob}--\eqref{loc3:prob}. Setting 
\begin{align}
v_k&=\tilde{u}_k\circ\bm{\Phi}^{-1}&&\text{ for } k\in\{1,2,3\},\\ 
g_k^{(x)}&=g_k\circ\bm{\Phi}^{-1}&&\text{ for } k\in\{1,2,3\},\label{eq:gkx}\\
h_i^{(x)}&=h_i\circ\bm{\Phi}^{-1}&&\text{ for }  i\in\{1,2\},\label{eq:hkx}
\end{align} gives
\begin{subequations}\label{trans1:prob}
\begin{align}
\Delta_{\bm{x}} v_1+(\Delta_{\bm{y}}-\Delta_{\bm{x}}) v_1&=g_1^{(x)}&&\inn D, \label{trans1:eq}
\\
\partial_{x_1} v_1+(\nabla_{\bm{y}}\Phi_1\cdot\nabla_{\bm{y}} -\partial_{x_1}) v_1&=h_1^{(x)} &&\onn \Gamma_1, \label{trans1:BC1}
\\
v_1&=0 &&\onn \Gamma_i,\qquad\text{for } i\in\{2,3,4\},\label{trans1:BC2} 
\end{align}
\end{subequations}
\begin{subequations}\label{trans2:prob}
\begin{align}
\Delta_{\bm{x}} v_2 +(\Delta_{\bm{y}}-\Delta_{\bm{x}})v_2&=g_2^{(x)}&&\inn D, \label{trans2:eq}
\\
v_2&=0 &&\onn \Gamma_i,\qquad\text{for } i\in\{1,3,4\},\label{trans2:BC1}
\\
\partial_{x_2} v_2+ (\nabla_{\bm{y}}\Phi_2\cdot \nabla_{\bm{y}}-\partial_{x_2}) v_2&=h_2^{(x)} &&\onn \Gamma_2, \label{trans2:BC2}
\end{align}
\end{subequations}
\begin{subequations}\label{trans3:prob}
\begin{align}
\Delta_{\bm{x}} v_3+(\Delta_{\bm{y}}-\Delta_{\bm{x}}) v_3&=g_3^{(x)}&&\inn D, \label{trans3:eq}
\\
v_3&=0 &&\onn \Gamma_i, \qquad\text{for } i\in\{1,2,3,4\}.\label{trans3:BC1}
\end{align}
\end{subequations}
The problem is written this way because we want to solve it by studying the invertibility of an operator. We define the function spaces 
\[
X_s:=\{(v_1,v_2,v_3)\in H^s(D;\mathbb{R}^3)|\,v_k=0 \text{ on } \Gamma_i\quad  \text{for all}\, k,i \text{ except } k=i=1,2\},
\] 
\[
\begin{aligned}
Y_s:=\{(g_1,g_2,g_3,h_1,h_2)\in H^{s-2}(D;\mathbb{R}^3)&\times H^{s-3/2}(\Gamma_1;\mathbb{R})\times H^{s-3/2}(\Gamma_2;\mathbb{R})|\, h_1=0\text{ on }\partial\Gamma_1,\\
&h_2=0\text{ on }\partial\Gamma_2\text{ and } g_3\vert_{S_i}=0, i\in\{1,2,3,4\} \text{ if } s>3 \}
\end{aligned} 
\]
 and the operators
\[
L:(v_1,v_2,v_3)\mapsto (\Delta_{\bm{x}} v_1,\Delta_{\bm{x}} v_2,\Delta_{\bm{x}} v_3, \gamma_1\partial_{x_1}v_1,\gamma_2\partial_{x_2}v_2),
\]
\[
\begin{aligned}
&S:(v_1,v_2,v_3)\mapsto \\
&\;((\Delta_{\bm{y}}-\Delta_{\bm{x}}) v_1,(\Delta_{\bm{y}}-\Delta_{\bm{x}}) v_2,(\Delta_{\bm{y}}-\Delta_{\bm{x}}) v_3, \gamma_1(\nabla_{\bm{y}}\Phi_1\cdot\nabla_{\bm{y}}-\partial_{x_1})v_1,\gamma_2(\nabla_{\bm{y}}\Phi_2\cdot\nabla_{\bm{y}}-\partial_{x_2})v_2),
\end{aligned}
\]
where $\gamma_i$ denotes the trace operator from $D$ to $\Gamma_i$.
If we can show that
\[
L+S:X_s\to Y_s
\]
is invertible for the correct value of $s$ then we have solved problems \eqref{trans1:prob}--\eqref{trans3:prob} for a given right hand side $(g_1,g_2,g_3,h_1,h_2)\in Y_s$. However we start by showing that $L+S$ indeed maps $X_s$ into $Y_s$. That $L$ maps $X_s$ into $Y_s$ is clear but for completeness we show that $S$ does this too.
\begin{proposition}
The operator $S$ maps $X_s$ into $Y_s$.
\end{proposition}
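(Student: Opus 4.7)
The task is to verify two separate properties for $(v_1,v_2,v_3)\in X_s$: that each component of $S(v_1,v_2,v_3)$ has the Sobolev regularity required by $Y_s$, and that the three corner conditions of $Y_s$ are satisfied, namely $h_1=0$ on $\partial\Gamma_1$, $h_2=0$ on $\partial\Gamma_2$, and, if $s>3$, $g_3|_{S_i}=0$ for $i\in\{1,2,3,4\}$.

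For regularity, I would first expand $\Delta_{\bm{y}}$ in the $\bm{x}$ variables by the chain rule, obtaining a variable-coefficient second-order operator of the form $\sum_{k,l}a_{kl}(\bm{x})\partial_{x_k}\partial_{x_l}+\sum_k b_k(\bm{x})\partial_{x_k}$ with $a_{kl}(\bm{x})=(\nabla_{\bm{y}}\Phi_k\cdot\nabla_{\bm{y}}\Phi_l)\circ\bm{\Phi}^{-1}(\bm{x})$, and do the analogous thing with the first-order operator $\nabla_{\bm{y}}\Phi_i\cdot\nabla_{\bm{y}}$. Since $\bm{\Psi}\in C^4$ and $\bm{\Phi}$ equals $\bm{\Psi}$ on a compact set and the identity outside, the coefficients are uniformly bounded in $C^2(\bar D)$, so they act as continuous multipliers on the Sobolev spaces in play. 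This yields the second-order correction $(\Delta_{\bm{y}}-\Delta_{\bm{x}})v_k\in H^{s-2}(D;\mathbb{R})$ and the first-order correction in $H^{s-1}(D;\mathbb{R})$, whose trace on $\Gamma_i$ lands in $H^{s-3/2}(\Gamma_i;\mathbb{R})$.

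For $h_1=0$ on $\partial\Gamma_1=S_1\cup S_4$, I would write the correction as
\[
(\nabla_{\bm{y}}\Phi_1\cdot\nabla_{\bm{y}}-\partial_{x_1})v_1=(A_1-1)\partial_{x_1}v_1+A_2\partial_{x_2}v_1+A_3\partial_{x_3}v_1,
\]
with $A_k(\bm{x})=(\nabla_{\bm{y}}\Phi_1\cdot\nabla_{\bm{y}}\Phi_k)\circ\bm{\Phi}^{-1}$. On $\bar\Gamma_2\supset S_1$ the Dirichlet condition $v_1|_{\Gamma_2}=0$ kills the tangential derivatives $\partial_{x_1}v_1$ and $\partial_{x_3}v_1$, reducing $h_1|_{S_1}$ to $A_2|_{S_1}\partial_{x_2}v_1|_{S_1}$. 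The pre-image $\bm{\Phi}^{-1}(S_1)$ consists of points at which $\Phi_1=\Phi_2=0$; there $\nabla_{\bm{y}}\Phi_1$ and $\nabla_{\bm{y}}\Phi_2$ are the outward normals to two surfaces that meet at a right angle, so their dot product vanishes and $A_2|_{S_1}=0$. On $S_4$ the argument is cleaner: choosing $d>\epsilon$ places $\bm{\Phi}^{-1}(S_4)$ in the region $\{|y_2|>\epsilon\}$ where $\bm{\Phi}=\mathrm{id}$, so $A_1=1$, $A_2=A_3=0$ and the entire correction vanishes. The condition $h_2=0$ on $\partial\Gamma_2$ follows after swapping the roles of indices $1$ and $2$.

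For $g_3|_{S_i}=0$ with $s>3$, Sobolev embedding gives $v_3\in C^1(\bar D)$, and since $v_3$ vanishes on all four faces we may write $v_3=x_k x_l\tilde w$ near each edge $S_i=\{x_k=x_l=0\}$. A direct computation shows that all first derivatives and all unmixed second derivatives of $v_3$ vanish on $S_i$, leaving only the mixed term $\partial_{x_k}\partial_{x_l}v_3|_{S_i}=\tilde w|_{S_i}$. Hence $(\Delta_{\bm{y}}-\Delta_{\bm{x}})v_3|_{S_i}=2(a_{kl}-\delta_{kl})\tilde w|_{S_i}$, and $a_{kl}-\delta_{kl}=0$ on $S_i$ for the same two reasons as before: at $S_1$ by the right-angle hypothesis, and at $S_2,S_3,S_4$ because $\bm{\Phi}=\mathrm{id}$ there. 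The main technical obstacle is establishing that $A_2$ (and the corresponding $a_{kl}$) actually vanishes along the whole of $\bm{\Phi}^{-1}(S_1)$ rather than only at the reference point on the edge; this is precisely where the line-of-curvature hypothesis on $\partial\partial\Omega_\pm$ is needed, as it allows $\bm{\Psi}$ to be built from principal-coordinate charts that propagate the orthogonality of the two normals along the entire edge.
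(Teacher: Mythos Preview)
Your argument follows the same route as the paper's: reduce to checking the edge conditions at $S_1$ (the others being trivial because $\bm{\Phi}=\mathrm{id}$ away from the support of the cutoff), kill the tangential derivatives of $v_1$ on $S_1$ using the Dirichlet condition on $\Gamma_2$, and dispose of the remaining normal derivative via $a_{12}|_{S_1}=0$. Your treatment of $g_3|_{S_1}$ is in fact more careful than the paper's brief ``all derivatives vanish along the edge'': you correctly isolate the surviving mixed second derivative and observe that its coefficient $a_{12}$ vanishes.

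The one point that needs correcting is your final paragraph. The vanishing of $A_2=a_{12}$ along the whole of $S_1$ does \emph{not} require the line-of-curvature hypothesis; it follows from the right-angle hypothesis alone. At every point of $\bm{\Phi}^{-1}(S_1)$ lying on the geometric edge, $\nabla_{\bm{y}}\Phi_i$ is normal to the level set $\{\Phi_i=0\}=\partial\Omega_i$, hence parallel to $\bm{n}_i$, and $\bm{n}_1\cdot\bm{n}_2=0$ everywhere along the edge by assumption. No principal-coordinate chart is needed for this. The line-of-curvature condition enters the paper only later, in Proposition~\ref{Prop:Yspace}, where one must show that the \emph{specific} right-hand side $g_3^{(x)}$ built from the solution $\bm{u}$ lies in $Y_{s+1}$; there a term of the form $\bm{n}_1\cdot[(\bm{n}_2\cdot\nabla_{\bm{y}})\bm{n}_3]$ appears, which is a component of the shape operator and vanishes precisely because the edge is a curvature line. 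That is a different statement from the one you are proving here.
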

\begin{remark} Note that it is only the conditions at $\bar{\Gamma}_1\cap\bar{\Gamma}_2$ that have to be checked since $S$ is $0$ outside $\tilde{\omega}$. In the proof we also assume $s=3+\sigma$ and check all the conditions. For smaller values of $s$ the only difference is that we only have to check the conditions that are well defined and the ones we have to check can be treated in the same way.
\end{remark}
\begin{proof}We begin by writing $\nabla_{\bm{y}}\Phi_i\cdot\nabla_{\bm{y}}$ and $\Delta_{\bm{y}}$ in terms of derivatives in the $\bm{x}$-variables
\[
\nabla_{\bm{y}}\Phi_i\cdot\nabla_{\bm{y}}=\sum_{j=1}^3a_{ij} \partial_{x_j}
\]
and 
\[
\Delta_{\bm{y}}=\sum_{i,j=1}^3 a_{ij}\partial_{x_i}\partial_{x_j}+\sum_{j=1}^3 a_j\partial_{x_j}
\]
where $a_{ij}=({\nabla_{\bm{y}}\Phi_i\cdot\nabla_{\bm{y}}\Phi_j})\circ \bm{\Phi}^{-1}$, $a_j=\Delta_{\bm{y}}\Phi_j\circ\bm{\Phi}^{-1}$.
To show that
\[
(\nabla_{\bm{y}}\Phi_1\cdot\nabla_{\bm{y}}-\partial_{x_1})v_1\vert_{S_1}=0,
\]
first note that, $\partial_{x_1}v_1\vert_{\bar{\Gamma}_1\cap\bar{\Gamma}_2}=\partial_{x_3}v_1\vert_{\bar{\Gamma}_1\cap\bar{\Gamma}_2}=0$ since $v_1=0$ on $\Gamma_2$. On the other hand, $\nabla_{\bm{y}}\Phi_i\circ\bm{\Phi}^{-1}$ is parallel to $\bm{n}_i$ on $\Gamma_i$. Hence $\nabla_{\bm{y}}\Phi_1$ and $\nabla_{\bm{y}}\Phi_2$ are orthogonal at the edge so the coefficient, $b_{12}$, in front of $\partial_{x_2}v_1$ vanishes along $S_1$.

The condition
\[
(\nabla_{\bm{y}}\Phi_2\cdot\nabla_{\bm{y}}-\partial_{x_1})v_2\vert_{S_1}=0
\]
is checked analogously.

That
\[
(\Delta_{\bm{y}}-\Delta_{\bm{x}})v_3\vert_{S_1}=0
\]
follows immediately from $v_3=0$ on both $\Gamma_1$ and $\Gamma_2$ which implies that all derivatives vanish along the edge.
\end{proof}

\subsection{Invertibility of $L+S$}
The results in Proposition \ref{Prop:DirNeu} immediately give us the following.
\begin{lemma}
Assume $s= m+\sigma$ for $m=1,2,3$. Then $L:X_s\to Y_s$ is invertible.
\end{lemma}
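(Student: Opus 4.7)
The plan is to exploit the fact that $L$ is block diagonal. Its range consists of the three Laplacians $\Delta_{\bm x} v_k$ together with the two Neumann traces $\gamma_1 \partial_{x_1} v_1$ and $\gamma_2 \partial_{x_2} v_2$; combined with the homogeneous Dirichlet data built into $X_s$, inverting $L$ amounts to solving three uncoupled scalar Poisson problems on $D$: one for $v_1$ (Neumann on $\Gamma_1$, Dirichlet on $\Gamma_2, \Gamma_3, \Gamma_4$), one for $v_2$ (Neumann on $\Gamma_2$, Dirichlet on the others), and one for $v_3$ (Dirichlet on all four faces). Each is an instance of the mixed boundary value problem \eqref{DirNeuNonhom}, so the idea is to apply Proposition \ref{Prop:DirNeu} component by component, with the proposition's integer parameter, call it $m'$, chosen so that the output regularity $H^{m'+\sigma+2}$ equals $H^s$; this forces $m' = m-2 \in \{-1,0,1\}$, which lies inside the proposition's allowed range.

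The real work, and the expected main obstacle, is translating membership in $Y_s$ into the edge compatibility conditions \eqref{compcondb}--\eqref{compcondc}. Reading off $J_i$ and $K_i$ for each subproblem handles them in turn. For $v_3$ all $J_i = 1$ and $K_i = 2$: condition \eqref{compcondb} is trivial because the Dirichlet data vanishes, and \eqref{compcondc} reduces to $g_3|_{S_i}=0$ at $s = 3+\sigma$, which is precisely the edge condition built into $Y_s$. For $v_1$ one has $J_1 = 0$, $J_{2,3,4}=1$, so $K_1 = K_4 = 1$ and $K_2 = K_3 = 2$; the non-trivial instances of \eqref{compcondb} reduce to $h_1|_{S_1} = h_1|_{S_4} = 0$, i.e.~$h_1|_{\partial\Gamma_1} = 0$, which is in $Y_s$, while the \eqref{compcondc} instances at the Dirichlet--Dirichlet corners $S_2$ and $S_3$ are of the same type as those for $v_3$ and are to be treated analogously (in the intended application they are automatically satisfied, since the data arising from the localisation is supported only near the single physical edge mapped to $S_1$). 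The $v_2$ subproblem is symmetric to $v_1$.

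With the hypotheses of Proposition \ref{Prop:DirNeu} verified, the proposition delivers for each $k\in\{1,2,3\}$ a unique $v_k \in H^s(D;\mathbb{R})$ realising the prescribed Laplacian and boundary trace, together with the estimate
\[
\Vert v_k\Vert_{H^s(D;\mathbb{R})} \lesssim \Vert g_k\Vert_{H^{s-2}(D;\mathbb{R})} + \Vert h_k\Vert_{H^{s-3/2}(\Gamma_k;\mathbb{R})},
\]
the trace term being absent for $k=3$. By construction $\bm v = (v_1,v_2,v_3) \in X_s$ and $L\bm v$ returns exactly the given data $(g_1,g_2,g_3,h_1,h_2)$, so $L$ is a bounded linear bijection $X_s \to Y_s$ with continuous inverse. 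Apart from the compatibility bookkeeping just described, the proof reduces to matching the index conventions of the lemma with those of Proposition \ref{Prop:DirNeu} and summing the three scalar estimates.
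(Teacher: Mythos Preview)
Your approach matches the paper's exactly: the paper's entire proof is the single line that the lemma follows immediately from Proposition~\ref{Prop:DirNeu}, and you have simply unpacked this componentwise with the index shift $m'=m-2$. You are also right to flag the compatibility condition~\eqref{compcondc} at the far Dirichlet--Dirichlet edges ($S_2,S_3$ for $v_1$ and $S_3,S_4$ for $v_2$) when $s=3+\sigma$: these constraints on $g_1,g_2$ are \emph{not} included in the paper's definition of $Y_s$, so $L$ is not literally onto $Y_s$ as written; but, as you observe, in the localisation argument all data are supported near $S_1$ and the issue never arises---the paper tacitly relies on this.
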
 
This means we can instead show the invertibility of $I+L^{-1}S$. For this we will use the following lemma.
\begin{lemma}\label{lemma:invertibility} Let $X$ be a Banach space, $I:X\to X$ the identity operator and $N$ a natural number. If $T:X\to X$ is an operator so that $T^N$ is a contraction, then $I-T$ is invertible.
\end{lemma}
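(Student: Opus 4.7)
The plan is to reduce the statement to a standard Neumann series argument using a telescoping factorization. For any bounded operator $T$ on $X$ one has the algebraic identity
\begin{equation*}
I - T^N = (I - T)(I + T + T^2 + \cdots + T^{N-1}) = (I + T + T^2 + \cdots + T^{N-1})(I - T),
\end{equation*}
which holds because the middle terms cancel in pairs. This identity is the backbone of the proof: it allows the single factor $I - T$ to inherit invertibility from the $N$-th iterate.

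Next I would use that $T^N$ is a contraction, which in the linear setting (the one relevant to the application to $T = -L^{-1}S$ above) means $\|T^N\|_{\mathcal{L}(X)} < 1$. The usual Neumann series then converges in operator norm,
\begin{equation*}
(I - T^N)^{-1} = \sum_{k=0}^\infty T^{Nk} \in \mathcal{L}(X),
\end{equation*}
providing a bounded two-sided inverse for $I - T^N$.

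Writing $A := I + T + \cdots + T^{N-1}$ and applying $(I - T^N)^{-1}$ to both sides of the factorization, one obtains
\begin{equation*}
(I - T)\bigl[A (I - T^N)^{-1}\bigr] = I = \bigl[(I - T^N)^{-1} A\bigr](I - T),
\end{equation*}
exhibiting $I - T$ as invertible with explicit inverse $A(I - T^N)^{-1}$, which concludes the argument.

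The only point requiring care is the meaning of \emph{contraction}. For a linear $T$ the contraction condition is equivalent to $\|T^N\| < 1$ and the proof above is direct. If one wished to permit nonlinear $T$ with Lipschitz constant strictly less than $1$ for $T^N$, the factorization identity no longer makes literal sense; instead one would invoke the variant of the Banach fixed-point theorem stating that if some iterate of a self-map is a contraction, the map itself has a unique fixed point, and apply it to $F_y(x) := Tx + y$ for each $y \in X$ to obtain a unique solution of $(I-T)x = y$. In the setting of this paper only the linear version is needed, so I expect no real obstacle beyond setting up the factorization cleanly.
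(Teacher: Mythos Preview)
Your argument is correct and is essentially the same as the paper's: the paper invokes the well-known invertibility of $I-T^N$ and then the factorization $I-T^N=(I-T)\sum_{n=0}^{N-1}T^n$, exactly as you do. Your version is simply more explicit about obtaining a two-sided inverse and about the Neumann series for $(I-T^N)^{-1}$.
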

\begin{proof} It is well known that under the assumptions of the lemma $I-T^N$ is invertible. The invertibility of $I-T$ follows from the fact that we can write $I-T^N=(I-T)\sum_{n=0}^{N-1}T^n$.
\end{proof}
To show that the above condition holds for $L^{-1}S$ we need the following result for our change of variables.
\begin{lemma}\label{lemma:phibound}Let $\tilde{\bm{\Phi}}(y)=\bm{\Phi}(\bm{y})-\bm{y}$. Then
\begin{equation}\label{eq:phibound1}
\lim_{\epsilon\to 0}\Vert \tilde{\bm{\Phi}}\Vert_{W^{1,\infty}(T;\mathbb{R}^3)}=0.
\end{equation}
Furthermore $
\Vert \tilde{\bm{\Phi}}\Vert_{W^{2,\infty}(T;\mathbb{R}^3)}
$ remains bounded as $\epsilon\to 0$.
\end{lemma}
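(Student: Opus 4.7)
The plan is a direct computation based on the explicit formula
\[
\tilde{\bm{\Phi}}(\bm{y}) = \bm{\Phi}(\bm{y}) - \bm{y} = \rho(\bm{y}/\epsilon)\bigl(\bm{\Psi}(\bm{y}) - \bm{y}\bigr),
\]
which shows that $\tilde{\bm{\Phi}}$ is supported in $[-\epsilon,\epsilon]^3$. Thus it suffices to bound $\tilde{\bm{\Phi}}$ and its derivatives of order up to two pointwise on that cube.

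The key observation is the behavior of $\bm{G}(\bm{y}) := \bm{\Psi}(\bm{y}) - \bm{y}$ near the origin. Since $\bm{\Psi}(0) = 0$ and $\bm{\Psi}'(0) = \bm{I}$ by construction, one has $\bm{G}(0) = 0$ and $\bm{G}'(0) = 0$. Because $\partial\Omega$ is $C^4$, the change of variables $\bm{\Psi}$ is at least $C^3$ on $\omega$, so Taylor's theorem yields constants depending on $\bm{\Psi}$ but not on $\epsilon$ such that
\[
|\bm{G}(\bm{y})| \lesssim |\bm{y}|^2, \qquad |\nabla\bm{G}(\bm{y})| \lesssim |\bm{y}|, \qquad |\nabla^2\bm{G}(\bm{y})| \lesssim 1,
\]
uniformly for $\bm{y} \in \omega$.

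The next step is to apply Leibniz to $\tilde{\bm{\Phi}} = \rho(\cdot/\epsilon)\bm{G}$ and combine the scaling estimate $|\nabla^k\rho(\bm{y}/\epsilon)| \lesssim \epsilon^{-k}$ with the bound $|\bm{y}| \lesssim \epsilon$ valid on $\supp\rho(\cdot/\epsilon)$. This gives
\[
|\tilde{\bm{\Phi}}(\bm{y})| \lesssim \epsilon^2, \qquad |\nabla\tilde{\bm{\Phi}}(\bm{y})| \lesssim \epsilon^{-1}\cdot\epsilon^2 + 1\cdot\epsilon \lesssim \epsilon,
\]
and
\[
|\nabla^2\tilde{\bm{\Phi}}(\bm{y})| \lesssim \epsilon^{-2}\cdot\epsilon^2 + 2\epsilon^{-1}\cdot\epsilon + 1 \lesssim 1.
\]
Taking suprema yields $\Vert\tilde{\bm{\Phi}}\Vert_{W^{1,\infty}(T;\mathbb{R}^3)} \lesssim \epsilon$, which gives \eqref{eq:phibound1}, and $\Vert\tilde{\bm{\Phi}}\Vert_{W^{2,\infty}(T;\mathbb{R}^3)} \lesssim 1$, which is the second claim.

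The computation is essentially mechanical; there is no genuine obstacle. The only point worth flagging is that the quadratic vanishing of $\bm{G}$ at the origin is exactly what is needed to absorb the negative powers of $\epsilon$ coming from differentiating $\rho(\cdot/\epsilon)$, and this is precisely why one insists on the normalization $\bm{\Psi}(0) = 0$, $\bm{\Psi}'(0) = \bm{I}$ in the construction of the change of variables.
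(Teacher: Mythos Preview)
Your proof is correct and follows essentially the same approach as the paper: write $\tilde{\bm{\Phi}} = \rho(\cdot/\epsilon)(\bm{\Psi}-\mathrm{id})$, use the Taylor bounds $|\bm{\Psi}(\bm{y})-\bm{y}|\lesssim|\bm{y}|^2$ and $|\nabla(\bm{\Psi}-\mathrm{id})|\lesssim|\bm{y}|$ coming from $\bm{\Psi}(0)=0$, $\bm{\Psi}'(0)=\bm{I}$, and combine with the Leibniz rule and the support condition. Your treatment of the second-order estimate is in fact slightly more explicit than the paper's, which just says the second part is proved similarly.
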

\begin{proof} Note that if we set $\bm{\tilde{\Psi}}(\bm{y})=\bm{\Psi}(\bm{y})-\bm{y}$ then $\tilde{\bm{\Phi}}(\bm{y})=\rho(\bm{y}/\epsilon)\bm{\tilde{\Psi}}(\bm{y})$.
For small enough $\bm{y}$ we have that
\[
\vert \bm{\tilde{\Psi}}(\bm{y})\vert \lesssim \vert \bm{y}\vert^2
\]
and
\[
\vert \partial_{y_i}\bm{\tilde{\Psi}}(\bm{y})\vert \lesssim\vert \bm{y}\vert,\; i=1,2,3.
\]
Since $\bm{\tilde{\Phi}}(\bm{y})$ has support in $[-\epsilon,\epsilon]^3\cap T$ we can choose $\epsilon$ small enough for the above inequalities to hold. Moreover
\[
\partial_{y_i}\tilde{\bm{\Phi}}(\bm{y})=\frac{1}{\epsilon}\partial_{z_i}\rho(\bm{z})\vert_{\bm{z}=\bm{y}/\epsilon}\tilde{\bm{\Psi}}(\bm{y})+ \rho(\bm{y}/\epsilon)\partial_{y_i}\bm{\tilde{\Psi}}(\bm{y}).
\]
Using the above inequalities and that the function only has support in $[-\epsilon,\epsilon]^3\cap T$ we get
\[
\Vert \tilde{\bm{\Phi}} \Vert_{L^{\infty}(T;\mathbb{R}^3)}\lesssim \epsilon^2
\]
and
\[
\Vert \partial_{y_i}\tilde{\bm{\Phi}} \Vert_{L^{\infty}(T;\mathbb{R}^3)}\lesssim \epsilon.
\]
This completes the proof of the first part. The second part is proven similarly also using that $\vert D^\alpha\tilde{\bm{\Psi}}(\bm{y})\vert$, $\vert \alpha\vert=2$, remains bounded for small $\epsilon$. 
\end{proof}
From this lemma we get the following corollary.
\begin{corollary}\label{corr:phiinvbound}Let $\bm{\Xi}(\bm{x})=\bm{\Phi}^{-1}(\bm{x})-\bm{x}$. Then
\begin{equation}\label{eq:phiinvbound}
\lim_{\epsilon\to 0}\Vert \bm{\Xi}\Vert_{W^{1,\infty}(R;\mathbb{R}^3)}=0.
\end{equation}
\end{corollary}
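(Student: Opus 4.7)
The plan is to transfer the estimates in Lemma \ref{lemma:phibound} from $\bm{\Phi}$ to $\bm{\Phi}^{-1}$ using the inverse function theorem. The starting point is the identity obtained by applying $\bm{\Phi}$ to the definition of $\bm{\Xi}$: from $\bm{x} = \bm{\Phi}(\bm{\Phi}^{-1}(\bm{x})) = \bm{\Phi}^{-1}(\bm{x}) + \tilde{\bm{\Phi}}(\bm{\Phi}^{-1}(\bm{x}))$ we read off
\[
\bm{\Xi}(\bm{x}) = -\tilde{\bm{\Phi}}(\bm{\Phi}^{-1}(\bm{x})).
\]
Before using this, I would verify that $\bm{\Phi}^{-1}$ is well-defined as a $C^1$ map on $R$ for sufficiently small $\epsilon$. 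Outside $[-\epsilon,\epsilon]^3$ the map $\bm{\Phi}$ equals the identity, while inside, Lemma \ref{lemma:phibound} gives $\Vert \tilde{\bm{\Phi}}'\Vert_{L^\infty} \to 0$, so $\bm{\Phi}'(\bm{y}) = I + \tilde{\bm{\Phi}}'(\bm{y})$ is uniformly invertible with $\det \bm{\Phi}'(\bm{y}) > 1/2$; together with the standard quantitative inverse function argument this makes $\bm{\Phi}$ a global $C^1$-diffeomorphism of $\mathbb{R}^3$ that takes $T$ bijectively onto $D$, so $\bm{\Xi}$ is well-defined on $R$.

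The $L^\infty$ part of \eqref{eq:phiinvbound} is then immediate from the identity: $\Vert \bm{\Xi}\Vert_{L^\infty(R;\mathbb{R}^3)} \leq \Vert \tilde{\bm{\Phi}}\Vert_{L^\infty(T;\mathbb{R}^3)} \to 0$ by Lemma \ref{lemma:phibound}.

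For the gradient, I would differentiate $\bm{\Phi}^{-1}(\bm{x}) = \bm{x} + \bm{\Xi}(\bm{x})$ via the inverse function theorem to obtain
\[
I + \bm{\Xi}'(\bm{x}) = \bigl[\bm{\Phi}'(\bm{\Phi}^{-1}(\bm{x}))\bigr]^{-1} = \bigl[I + \tilde{\bm{\Phi}}'(\bm{\Phi}^{-1}(\bm{x}))\bigr]^{-1}.
\]
Choose $\epsilon$ so small that $\Vert \tilde{\bm{\Phi}}'\Vert_{L^\infty(T)} \leq 1/2$ (possible by Lemma \ref{lemma:phibound}); then the Neumann series yields
\[
\bm{\Xi}'(\bm{x}) = \sum_{k\geq 1} (-1)^k \bigl[\tilde{\bm{\Phi}}'(\bm{\Phi}^{-1}(\bm{x}))\bigr]^k,
\]
whose $L^\infty$ norm is bounded by $\Vert \tilde{\bm{\Phi}}'\Vert_{L^\infty(T)}/(1 - \Vert \tilde{\bm{\Phi}}'\Vert_{L^\infty(T)})$. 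Since the numerator tends to $0$ by Lemma \ref{lemma:phibound}, so does $\Vert \bm{\Xi}'\Vert_{L^\infty(R)}$, completing the proof.

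I do not anticipate a serious obstacle here; the only technical point worth care is the global invertibility of $\bm{\Phi}$ (not just local), which I would settle by combining the explicit fact that $\bm{\Phi}(\bm{y}) = \bm{y}$ outside a compact set with the smallness of $\tilde{\bm{\Phi}}'$ inside. Everything else is a direct application of the inverse function formula and the Neumann series, and the bound on $\Vert \tilde{\bm{\Phi}}\Vert_{W^{2,\infty}}$ from Lemma \ref{lemma:phibound} is not even needed at this stage (it would only enter if one wanted an analogous $W^{2,\infty}$ statement).
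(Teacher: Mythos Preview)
Your proof is correct and is exactly the argument the paper leaves implicit: the corollary is stated without proof, as an immediate consequence of Lemma \ref{lemma:phibound}, and your use of the identity $\bm{\Xi}=-\tilde{\bm{\Phi}}\circ\bm{\Phi}^{-1}$ together with the Neumann series for $(I+\tilde{\bm{\Phi}}')^{-1}$ is the natural way to fill in those details.
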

We also need the following result about composite functions.
\begin{lemma}\label{lemma:compositionestimate} Let $f\in C^\infty(T;\mathbb{R})$ have compact support and satisfy $f(0)=0$. Then
\[
\Vert f\circ \bm{\Phi}^{-1}\Vert_{W^{1,4}(D;\mathbb{R})}\lesssim\Vert \nabla_{\bm{y}} f\Vert_{L^\infty(T;\mathbb{R}^3)}\Vert \bm{\Phi}^{-1}\Vert_{W^{1,4}(\bm{\Phi}(\mathrm{supp}(f));\mathbb{R}^3)}
\]
\end{lemma}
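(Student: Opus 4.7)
The approach is to bound the $L^4$ norm of $f\circ\bm{\Phi}^{-1}$ and of its gradient separately on $D$, noting that both quantities vanish outside $\bm{\Phi}(\mathrm{supp}(f))$, so the integrals effectively run only over $\bm{\Phi}(\mathrm{supp}(f))$, which is precisely the set on which the $W^{1,4}$ norm of $\bm{\Phi}^{-1}$ is measured on the right-hand side.

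For the gradient part the plan is to apply the chain rule, giving the pointwise estimate $|\nabla_{\bm{x}}(f\circ\bm{\Phi}^{-1})(\bm{x})|\leq \|\nabla_{\bm{y}} f\|_{L^\infty(T)}\,|D\bm{\Phi}^{-1}(\bm{x})|$, after which raising to the fourth power and integrating over $\bm{\Phi}(\mathrm{supp}(f))$ produces $\|\nabla_{\bm{x}}(f\circ\bm{\Phi}^{-1})\|_{L^4(D)}\lesssim \|\nabla_{\bm{y}} f\|_{L^\infty}\,\|D\bm{\Phi}^{-1}\|_{L^4(\bm{\Phi}(\mathrm{supp}(f)))}$. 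This half is entirely routine.

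For the zeroth-order part I would use $f(0)=0$ together with the fundamental theorem of calculus to get a bound of the form $|f(\bm{y})|\leq \|\nabla_{\bm{y}} f\|_{L^\infty}|\bm{y}|$, and then substitute $\bm{y}=\bm{\Phi}^{-1}(\bm{x})$ and integrate to land on $\|f\circ\bm{\Phi}^{-1}\|_{L^4(D)}\lesssim \|\nabla_{\bm{y}} f\|_{L^\infty}\,\|\bm{\Phi}^{-1}\|_{L^4(\bm{\Phi}(\mathrm{supp}(f)))}$. The one point that requires care, and which I expect to be the main (and only) genuine obstacle, is that the straight segment from $0$ to $\bm{y}$ need not remain inside $T=\bm{\Phi}^{-1}(D)$, since $T$ has a corner at the origin and is not a priori star-shaped. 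I would bypass this by first extending $f$ by zero to all of $\mathbb{R}^3$: since $\mathrm{supp}(f)$ is a compact subset of the open set $T$, the extension is still in $C_c^\infty(\mathbb{R}^3)$, preserves the value $f(0)=0$, and has the same $L^\infty$ gradient bound as $f$ itself, so the FTC along any straight segment in $\mathbb{R}^3$ applies without geometric issues.

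Adding the two estimates then gives the claim. Beyond the chain rule and FTC, no additional machinery is needed; the only subtle step is the zero extension argument used to legitimize the FTC, and everything else amounts to pointwise pullback inequalities combined with the change of variables already implicit in the definition of $\bm{\Phi}(\mathrm{supp}(f))$.
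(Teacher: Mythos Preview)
Your approach is the same as the paper's: the mean-value inequality $|f(\bm{y})-f(0)|\le\|\nabla_{\bm{y}} f\|_{L^\infty}|\bm{y}|$ evaluated at $\bm{y}=\bm{\Phi}^{-1}(\bm{x})$ for the $L^4$ part, and the chain rule for the gradient part. The paper simply writes these two estimates down (citing a remark in Bourdaud--Sickel) and does not discuss the star-shapedness issue you raise.

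One small caveat on your proposed fix for that issue: the claim that $\mathrm{supp}(f)$ is a compact subset of the \emph{open} set $T$ is not consistent with $f(0)=0$ being a genuine hypothesis, since $0\in\partial T$; if the support really stayed away from $\partial T$ the condition $f(0)=0$ would be automatic. In the only place the lemma is used (the functions $\nabla_{\bm y}\Phi_i\cdot\nabla_{\bm y}\Phi_j-\delta_{ij}$), $f$ is already smooth on all of $\mathbb{R}^3$ because $\bm\Phi$ is globally defined, so the FTC on straight segments applies without any extension; but your zero-extension argument as stated would not cover that situation, since those functions are nonzero right up to the corner.
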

\begin{proof} By a slight modification of the estimates in \cite[Remark 8]{BourdaudSickel2011} we get
\begin{align*}
\int_{\bm{\Phi}(\mathrm{supp}(f))} \vert f\circ\bm{\Phi}^{-1}(\bm{x})\vert^4 d\bm{x}&=\int_{\bm{\Phi}(\mathrm{supp}(f))} \vert f\circ\bm{\Phi}^{-1}(\bm{x})-f(0)\vert^4 d\bm{x}
\\
&\leq \Vert \nabla_{\bm{y}} f\Vert_{L^\infty(R;\mathbb{R}^3)}^4\int_{\bm{\Phi}(\mathrm{supp}(f))}\vert\bm{\Phi}^{-1}(\bm{x})\vert^4 d\bm{x}
\end{align*}
and
\begin{align*}
\int_{\bm{\Phi}(\mathrm{supp}(f))} \vert \partial_{x_i}(f\circ\bm{\Phi}^{-1})(\bm{x})\vert^4 d\bm{x}&=\int_{\bm{\Phi}(\mathrm{supp}(f))} \vert \nabla_{\bm{y}} f\circ\bm{\Phi}^{-1}(\bm{x})\cdot \partial_{x_i}\bm{\Phi}^{-1}(\bm{x})\vert^4 d\bm{x}
\\
&\leq \Vert \nabla_{\bm{y}} f\Vert_{L^\infty(R;\mathbb{R}^3)}^4\int_{\bm{\Phi}(\mathrm{supp}(f))}\vert\partial_{x_i}\bm{\Phi}^{-1}(\bm{x})\vert^4 d\bm{x},
\end{align*}
which proves the estimate.
\end{proof}
Now we can prove the following result to show that for a sufficiently small $\epsilon$ $a_{ij}-\delta_{ij}$ and $b_{ij}-\delta_{ij}$ are small in some appropriate norm.
\begin{lemma}\label{lemma:abestimate} We have that:

(i)
\[
\lim_{\epsilon\to 0}\Vert (\nabla_{\bm{y}}\Phi_i\cdot\nabla_{\bm{y}}\Phi_j-\delta_{ij})\circ \bm{\Phi}^{-1}\Vert_{W^{1,4}(D;\mathbb{R})}=0,
\]

(ii)
\[
\lim_{\epsilon\to 0} \Vert \Delta_{\bm{y}}\Phi_j\circ\bm{\Phi}^{-1}\Vert_{L^4(D;\mathbb{R})}=0.
\]
\end{lemma}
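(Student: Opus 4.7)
The plan is to rewrite both quantities in terms of $\tilde{\bm{\Phi}}(\bm{y})=\bm{\Phi}(\bm{y})-\bm{y}$, whose $W^{1,\infty}$ norm vanishes and whose $W^{2,\infty}$ norm stays bounded as $\epsilon\to 0$ by Lemma \ref{lemma:phibound}, and then to exploit the fact that $\tilde{\bm{\Phi}}$ is supported in $[-\epsilon,\epsilon]^3$, so the quantities of interest live on a set of measure $O(\epsilon^3)$.

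For part (ii), observe that $\Delta_{\bm{y}} y_j=0$ gives $\Delta_{\bm{y}}\Phi_j=\Delta_{\bm{y}}\tilde\Phi_j$, which is pointwise bounded by $\Vert \tilde{\bm{\Phi}}\Vert_{W^{2,\infty}(T;\mathbb{R}^3)}$ and supported in $[-\epsilon,\epsilon]^3\cap T$. Changing variables via $\bm{y}=\bm{\Phi}^{-1}(\bm{x})$ and using $\det\bm{\Phi}'(\bm{y})>1/2$ then yields
\[
\Vert \Delta_{\bm{y}}\Phi_j\circ\bm{\Phi}^{-1}\Vert_{L^4(D;\mathbb{R})}^4\lesssim \Vert \Delta_{\bm{y}}\tilde\Phi_j\Vert_{L^4(T;\mathbb{R})}^4\lesssim \Vert \tilde{\bm{\Phi}}\Vert_{W^{2,\infty}(T;\mathbb{R}^3)}^4\,\epsilon^3,
\]
which tends to zero.

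For part (i), set $F_{ij}(\bm{y}):=\nabla_{\bm{y}}\Phi_i\cdot\nabla_{\bm{y}}\Phi_j-\delta_{ij}$. Since $\rho\equiv 1$ near $0$ and $\bm{\Psi}'(0)=I$, we have $\bm{\Phi}'(0)=I$, so $F_{ij}(0)=0$, and writing $\nabla_{\bm{y}}\Phi_i=\bm{e}_i+\nabla_{\bm{y}}\tilde\Phi_i$ we obtain
\[
F_{ij}=\partial_{y_j}\tilde\Phi_i+\partial_{y_i}\tilde\Phi_j+\nabla_{\bm{y}}\tilde\Phi_i\cdot\nabla_{\bm{y}}\tilde\Phi_j,
\]
which is compactly supported in $[-\epsilon,\epsilon]^3\cap T$. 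I would then invoke Lemma \ref{lemma:compositionestimate} to get
\[
\Vert F_{ij}\circ\bm{\Phi}^{-1}\Vert_{W^{1,4}(D;\mathbb{R})}\lesssim \Vert \nabla_{\bm{y}} F_{ij}\Vert_{L^\infty(T;\mathbb{R}^3)}\,\Vert \bm{\Phi}^{-1}\Vert_{W^{1,4}(\bm{\Phi}(\mathrm{supp}(F_{ij}));\mathbb{R}^3)}.
\]
The first factor is uniformly bounded in $\epsilon$ because $\nabla_{\bm{y}} F_{ij}$ is a polynomial expression in the first and second derivatives of $\tilde{\bm{\Phi}}$, both controlled by Lemma \ref{lemma:phibound}. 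For the second factor, $|\bm{\Phi}(\mathrm{supp}(F_{ij}))|\lesssim \epsilon^3$ (since $\det\bm{\Phi}'$ is bounded), while Corollary \ref{corr:phiinvbound} gives a uniform $W^{1,\infty}$ bound for $\bm{\Phi}^{-1}$; hence the factor is $O(\epsilon^{3/4})$ and the product tends to $0$.

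The subtle point is that $\Vert \nabla_{\bm{y}} F_{ij}\Vert_{L^\infty}$ does \emph{not} itself vanish as $\epsilon\to 0$: Lemma \ref{lemma:phibound} gives only boundedness at the second-derivative level of $\tilde{\bm{\Phi}}$. The smallness in (i) therefore has to come entirely from the shrinking support, and is extracted via the $W^{1,4}$-norm of $\bm{\Phi}^{-1}$ on a set of volume $O(\epsilon^3)$ rather than from pointwise smallness of $\nabla_{\bm{y}} F_{ij}$; recognising this is the main conceptual step.
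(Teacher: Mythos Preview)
Your proof is correct and follows essentially the same approach as the paper: both parts rewrite the quantities in terms of $\tilde{\bm{\Phi}}$, use Lemma~\ref{lemma:phibound} for the pointwise bounds, and extract the smallness from the shrinking support (of measure $O(\epsilon^3)$), with part (i) going through Lemma~\ref{lemma:compositionestimate} exactly as you do. Your observation that the smallness in (i) comes entirely from the $W^{1,4}$ norm of $\bm{\Phi}^{-1}$ over a small set, rather than from $\Vert\nabla_{\bm{y}} F_{ij}\Vert_{L^\infty}$, is precisely the mechanism the paper uses.
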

\begin{proof}(i) Since $\nabla_{\bm{y}}\Phi_i=\nabla_{\bm{y}}\tilde{\Phi}_i-\bm{e}_i$ we get
\[
\nabla_{\bm{y}}\Phi_i\cdot\nabla_{\bm{y}}\Phi_j-\delta_{ij}=\nabla_{\bm{y}}\tilde{\Phi}_i\cdot\nabla_{\bm{y}}\tilde{\Phi}_i-\partial_{\bm{y}_j}\tilde{\Phi}_i-\partial_{\bm{y}_i}\tilde{\Phi}_j,
\]
which means $[\nabla_{\bm{y}}\Phi_i\cdot\nabla_{\bm{y}}\Phi_j-\delta_{ij}](0)=0$. Moreover it has support in $[-\epsilon,\epsilon]^3\cap T$ hence we can apply Lemma \ref{lemma:compositionestimate} to get
\[
\Vert (\nabla_{\bm{y}}\Phi_i\cdot\nabla_{\bm{y}}\Phi_j-\delta_{ij})\circ \bm{\Phi}^{-1}\Vert_{W^{1,4}(D;\mathbb{R})}\leq\Vert \nabla_{\bm{y}}(\nabla_{\bm{y}}\Phi_i\cdot\nabla_{\bm{y}}\Phi_j-\delta_{ij})\Vert_{L^\infty(T;\mathbb{R}^3)}\Vert \bm{\Phi}^{-1}\Vert_{W^{1,4}(\bm{\Phi}([-\epsilon,\epsilon]^3\cap T);\mathbb{R}^3)}.
\]
The first factor, $\Vert \nabla_{\bm{y}}(\nabla_{\bm{y}}\Phi_i\cdot\nabla_{\bm{y}}\Phi_j-\delta_{ij})\Vert_{L^\infty(T;\mathbb{R}^3)}$, is bounded for small $\epsilon$ due to Lemma \ref{lemma:phibound} and the second factor can be estimated with
\begin{align*}
\Vert \bm{\Phi}^{-1}\Vert_{W^{1,4}(\bm{\Phi}([-\epsilon,\epsilon]^3\cap T);\mathbb{R}^3)}&\leq\Vert \bm{\Xi}\Vert_{W^{1,4}(\bm{\Phi}([-\epsilon,\epsilon]^3\cap T);\mathbb{R}^3)}+\Vert \mathrm{Id}\Vert_{W^{1,4}(\bm{\Phi}([-\epsilon,\epsilon]^3\cap T);\mathbb{R}^3)}
\\
&\lesssim ( \Vert \bm{\Xi}\Vert_{W^{1,\infty}(D;\mathbb{R}^3)} +\Vert \mathrm{Id}\Vert_{W^{1,\infty}(\bm{\Phi}([-\epsilon,\epsilon]^3\cap T);\mathbb{R}^3)})\Vert 1\Vert_{L^4(\bm{\Phi}([-\epsilon,\epsilon]^3\cap T);\mathbb{R})}
\end{align*}
The term $\Vert \bm{\Xi}\Vert_{W^{1,\infty}(D;\mathbb{R}^3)}$ can be made arbitrarily small by Corollary \ref{corr:phiinvbound}. For the other we note that if we pick $\delta$ as the smallest number such that $\bm{\Phi}([-\epsilon,\epsilon]^3\cap T)\subseteq [-\delta,\delta]^3\cap D$, then $\delta\to 0$ as $\epsilon \to 0$ because $\bm{\Phi}$ is continuous. This means $\Vert \mathrm{Id}\Vert_{W^{1,\infty}(\bm{\Phi}([-\epsilon,\epsilon]^3\cap T);\mathbb{R}^3)}\lesssim 1+\delta$ and $\Vert 1\Vert_{L^4(\bm{\Phi}([-\epsilon,\epsilon]^3\cap T);\mathbb{R})}\lesssim \delta^{3/4}$ hence the right hand side of the inequality above goes to $0$ as $\epsilon\to 0$, proving the first part of this lemma.

(ii) To prove the second part of this lemma we note that
\[
\begin{aligned}
\int_D\vert \Delta_{\bm{y}}\Phi_j\vert^4\circ \bm{\Phi}^{-1}(\bm{x})d\bm{x}&=\int_D\vert\Delta_{\bm{y}}\Phi_j(\bm{y})\vert^4\vert\det D\bm{\Phi(y)}\vert d\bm{y}\\
&\leq\Vert \Delta_{\bm{y}}\Phi_j\Vert_{L^\infty(T;\mathbb{R}^3)}\int_{[-\epsilon,\epsilon]^3\cap T}\vert\det D\bm{\Phi(y)}\vert d\bm{y}
\end{aligned}
\]
because $\supp \Delta_{\bm{y}}\Phi_j\subset [-\epsilon,\epsilon]^3\cap T$. Furthermore $\Vert \Delta_{\bm{y}}\Phi_j\Vert_{L^\infty(T;\mathbb{R}^3)}$ is bounded as $\epsilon\to 0$ by Lemma \ref{lemma:phibound} since $\Delta_{\bm{y}}\Phi_j=\Delta_{\bm{y}}\tilde{\Phi}_j$. The same lemma shows that $D\bm{\Phi}\to \mathrm{Id}$ as $\epsilon\to 0$. Hence $\det D\bm{\Phi}\to 1$ as $\epsilon\to 0$ which means
\[
\int_{[-\epsilon,\epsilon]^3\cap T}\vert \det D\bm{\Phi}(\bm{y})\vert d\bm{y}\lesssim (1+\Vert \det D\bm{\Phi}(\bm{y})-1\Vert_{L^\infty([-\epsilon,\epsilon]^3\cap T,\mathbb{R})}) \epsilon^3,
\]
where the term $\Vert \det D\bm{\Phi}(\bm{y})-1\Vert_{L^\infty([-\epsilon,\epsilon]^3\cap T,\mathbb{R})}\to 0$ as $\epsilon\to 0$. This proves the second part of the lemma.
\end{proof}
Finally we can prove the following proposition, which together with Lemma \ref{lemma:invertibility} shows that $L+S$ is invertible.
\begin{proposition}\label{prop:contraction} Let $s=m+\sigma$.

(i) If $m=1$ and $\epsilon$ is sufficiently small then $L^{-1}S:X_s\to X_s$ is a contraction.

(ii) If $m=2,3$ and $\epsilon$ is sufficiently small then there exists $N$ such that $(L^{-1}S)^N:X_s\to X_s$ is a contraction.
\end{proposition}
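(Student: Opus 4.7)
The plan is to bound $\Vert L^{-1}Sv\Vert_{X_s}$ by exploiting the boundedness of $L^{-1}:Y_s\to X_s$ from the preceding lemma, together with the smallness of the coefficients established in Lemma \ref{lemma:abestimate}. Writing $S$ in $\bm{x}$-coordinates as in the proof of the previous proposition, the bulk components of $Sv$ take the form $\sum_{i,j}(a_{ij}-\delta_{ij})\partial_{x_i}\partial_{x_j}v+\sum_j a_j\partial_{x_j}v$, while the boundary components on $\Gamma_i$, $i=1,2$, are sums $\sum_{j\neq i}a_{ij}\,\gamma_i\partial_{x_j}v$ in which the offending coefficients are precisely the $a_{ij}-\delta_{ij}$ (for $i\ne j$, $\delta_{ij}=0$) controlled by Lemma \ref{lemma:abestimate}.

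For part (i), with $s=1+\sigma$, the target space $Y_s$ demands $H^{\sigma-1}$ regularity in the bulk and $H^{\sigma-1/2}$ on the faces. Since $\sigma-1\in(-1/2,0)$, the $H^{\sigma-1}$ multiplier norm of a function $f$ is controlled, by duality, by its $H^{1-\sigma}$ multiplier norm, and for $1-\sigma\in(0,1/2)$ it suffices that $f\in H^{1-\sigma}\cap L^\infty$. On the bounded domain $D$ one has $W^{1,4}(D)\hookrightarrow H^1(D)\cap L^\infty(D)$ (the first inclusion by H\"older, the second by Morrey in three dimensions), so Lemma \ref{lemma:abestimate} implies that multiplication by $a_{ij}-\delta_{ij}$ has operator norm tending to zero as $\epsilon\to 0$. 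The term $a_j\partial_{x_j}v$ is handled via H\"older together with the embedding $H^\sigma(D)\hookrightarrow L^4(D)$ in three dimensions and the $L^4$ smallness of $a_j$, and the boundary contributions are estimated analogously after applying the trace theorem. Composing with $L^{-1}$ then yields $\Vert L^{-1}Sv\Vert_{X_s}\leq C(\epsilon)\Vert v\Vert_{X_s}$ with $C(\epsilon)\to 0$, which for $\epsilon$ small enough is a contraction.

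For part (ii), with $m\in\{2,3\}$, the bulk component of $Sv$ must be controlled in the positive-regularity space $H^{m+\sigma-2}$, and Lemma \ref{lemma:abestimate} no longer delivers smallness at this regularity because derivatives of the coefficients of order $\geq 1$ only remain bounded as $\epsilon\to 0$. The plan is to derive instead a G\aa rding-type splitting
\[
\Vert L^{-1}Sv\Vert_{X_s}\leq \alpha(\epsilon)\Vert v\Vert_{X_s}+C\Vert v\Vert_{X_{s-1}},\qquad \alpha(\epsilon)\to 0,
\]
by applying a fractional Leibniz (Kato--Ponce type) estimate to each product $(a_{ij}-\delta_{ij})\partial_{x_i}\partial_{x_j}v$: only the piece in which all fractional derivatives land on $v$ keeps the small coefficient $a_{ij}-\delta_{ij}$ in $L^\infty$, while every remaining piece places at least one derivative on the coefficient and therefore strictly loses one derivative on $v$, landing in $X_{s-1}$. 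Proceeding by induction on $m$, with part (i) as the base case, $L^{-1}S$ is already a contraction on $X_{s-1}$ with some constant $c<1$. Iterating the splitting $N$ times and using this contraction along the orbit gives
\[
\Vert (L^{-1}S)^N v\Vert_{X_s}\leq \alpha(\epsilon)^N\Vert v\Vert_{X_s}+C'N\max(\alpha(\epsilon),c)^{N-1}\Vert v\Vert_{X_s},
\]
so choosing first $\epsilon$ small enough that $\alpha(\epsilon)<1$ and then $N$ large produces the desired contraction.

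The main obstacle is the bookkeeping in the fractional product estimates for $m\geq 2$: one must split each term of $Sv$ into a principal piece with a small top-order coefficient and remainder pieces absorbable into $\Vert v\Vert_{X_{s-1}}$, while simultaneously verifying that the edge compatibility conditions encoded in $Y_s$ are preserved by the splitting. Checking that the constants in the Kato--Ponce-type estimates remain independent of $\epsilon$, so that the smallness of the top coefficient is genuinely transmitted to $\alpha(\epsilon)$, is the delicate technical point.
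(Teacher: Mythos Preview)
Your proposal is correct and follows the same overall architecture as the paper: in part (i) you make $\Vert S\Vert$ small directly via the smallness of the coefficients from Lemma \ref{lemma:abestimate}, and in part (ii) you derive the splitting $\Vert L^{-1}Sv\Vert_{X_s}\le \alpha(\epsilon)\Vert v\Vert_{X_s}+C\Vert v\Vert_{X_{s-1}}$ and iterate, using the contraction already obtained on $X_{s-1}$.

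Two minor points of comparison. First, the paper obtains the product estimates by more elementary means than Kato--Ponce: for $m=1$ it cites Sickel's multiplication theorems (which give exactly $\Vert (a_{ij}-\delta_{ij})\partial_{x_i}\partial_{x_j}u_k\Vert_{H^{\sigma-1}}\lesssim \Vert a_{ij}-\delta_{ij}\Vert_{W^{1,4}}\Vert u\Vert_{X_s}$), and for $m=2,3$ it simply writes $\Vert fg\Vert_{H^{s-1}}\lesssim \Vert fg\Vert_{H^{s-2}}+\sum_j\Vert \partial_{x_j}(fg)\Vert_{H^{s-2}}$ and expands by the Leibniz rule, which immediately yields the small-principal-part-plus-lower-order decomposition without any paraproduct machinery. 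Your fractional Leibniz route works too, but is heavier than necessary. Second, your worry about ``verifying that the edge compatibility conditions encoded in $Y_s$ are preserved by the splitting'' is unfounded: one never decomposes $Sv$ into two separate elements of $Y_s$, one only bounds the single norm $\Vert Sv\Vert_{Y_s}$ by a sum of two real numbers, so no compatibility check for the pieces is required.
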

\begin{proof}(i) We begin by estimating the norm of $S$:
\begin{align*}
\Vert S \bm{u}\Vert_{Y_s}&\lesssim\sum_{k=1}^3\sum_{i,j=1}^3 \Vert  (a_{ij}-\delta_{ij})\partial_{x_i}\partial_{x_j} u_k\Vert_{H^{s-2}(D;\mathbb{R})}+\sum_{k=1}^3\sum_{j=1}^3\Vert a_j\partial_{x_j}u_k\Vert_{H^{s-2}(D;\mathbb{R})}
\\
&\qquad+\sum_{k=1}^2 \sum_{i=1}^3 \Vert (a_{ik}-\delta_{ik})\partial_{x_i} u_k \Vert_{H^{s-3/2}(\Gamma_k;\mathbb{R})}
\\
&\lesssim\sum_{k=1}^3\sum_{i,j=1}^3 \Vert  (a_{ij}-\delta_{ij})\partial_{x_i}\partial_{x_j} u_k\Vert_{H^{s-2}(D;\mathbb{R})}+\sum_{k=1}^3\sum_{j=1}^3\Vert a_j\partial_{x_j}u_k\Vert_{H^{s-2}(D;\mathbb{R})}
\\
&\qquad+\sum_{k=1}^2 \sum_{i=1}^3 \Vert (a_{ik}-\delta_{ik})\partial_{x_i} u_k \Vert_{H^{s-1}(D;\mathbb{R})}.
\end{align*}
By Theorems $3$, $5$ and $6$ in \cite{Sickel1993} we have that
\[
\begin{aligned}
\Vert  (a_{ij}-\delta_{ij})\partial_{x_i}\partial_{x_j} u_k\Vert_{H^{s-2}(D;\mathbb{R})}&\lesssim \Vert  (a_{ij}-\delta_{ij}) \Vert_{W^{1,4}(D;\mathbb{R})}\Vert\partial_{x_i}\partial_{x_j} u_k\Vert_{H^{s-2}(D;\mathbb{R})}
\\
&\lesssim \Vert  (a_{ij}-\delta_{ij}) \Vert_{W^{1,4}(D;\mathbb{R})}\Vert\bm{u}\Vert_{X_s},
\end{aligned}
\]
\[
\begin{aligned}
\Vert a_j\partial_{x_j}u_k\Vert_{H^{s-2}(D;\mathbb{R})}&\lesssim \Vert a_j\Vert_{L^4(D;\mathbb{R})}\Vert \partial_{x_j}u_k\Vert_{H^{s-1}(D;\mathbb{R})}
\\
&\lesssim \Vert a_j\Vert_{L^4(D;\mathbb{R})}\Vert \bm{u}\Vert_{X_s},
\end{aligned}
\]
and
\[
\begin{aligned}
\Vert  (a_{ik}-\delta_{ik})\partial_{x_i} u_k\Vert_{H^{s-1}(D;\mathbb{R})}&\lesssim \Vert  (a_{ik}-\delta_{ik}) \Vert_{W^{1,4}(D;\mathbb{R})}\Vert\partial_{x_i} u_k\Vert_{H^{s-1}(D;\mathbb{R})}
\\
&\lesssim \Vert  (a_{ik}-\delta_{ik}) \Vert_{W^{1,4}(D;\mathbb{R})}\Vert\bm{u}\Vert_{X_s}.
\end{aligned}
\]
By Lemma \ref{lemma:abestimate} $\Vert  (a_{ij}-\delta_{ij}) \Vert_{W^{1,4}(D;\mathbb{R})}$ and $\Vert a_j\Vert_{L^4(D;\mathbb{R})}$ can be made arbitrarily small for small enough $\epsilon$. Hence the norm of $S$ can be made arbitrarily small. This means that the norm of $L^{-1}S$ can be made small enough that $L^{-1}S$ is a contraction.

(ii) For $m=2$ the terms of the form $\Vert  (a_{ij}-\delta_{ij})\partial_{x_i}\partial_{x_j} u_k\Vert_{H^{s-2}(D;\mathbb{R})}$ can be estimated as before, while we have to estimate the boundary terms in a different way. First we note that
\begin{align*}
\Vert  (a_{ik}-\delta_{ik})\partial_{x_i} u_k\Vert_{H^{s-1}(D;\mathbb{R})}&\lesssim \sum_{j=1}^3 \left[\Vert  \partial_{x_j}(a_{ik}-\delta_{ik})\partial_{x_i} u_k\Vert_{H^{s-2}(D;\mathbb{R})}\right.
\\
&\qquad+\left.\Vert  (a_{ik}-\delta_{ik})\partial_{x_j}\partial_{x_i} u_k\Vert_{H^{s-2}(D;\mathbb{R})}\right]
\\
&\qquad+\Vert  (a_{ik}-\delta_{ik})\partial_{x_i} u_k\Vert_{H^{s-2}(D;\mathbb{R})}
\end{align*}
The term $\Vert  (a_{ik}-\delta_{ik})\partial_{x_k}\partial_{x_i} u_k\Vert_{H^{s-2}(D;\mathbb{R})}$ can be estimated above with $\delta_1\Vert \bm{u}\Vert_{X_s}$ where $\delta_1$ can be made arbitrarily small and the other terms can be estimated with $C_\epsilon\Vert \bm{u}\Vert_{X_{s-1}}$ where $C_\epsilon$ is a constant of depending on $\epsilon$. We can also estimate the terms of the form $\Vert a_j\partial_{x_j}u_k\Vert_{H^{s-2}(D;\mathbb{R})}$ with $C_\epsilon\Vert \bm{u}\Vert_{X_{s-1}}$. This means
\[
\Vert L^{-1}S\bm{u}\Vert_{X_s}\leq\Vert L^{-1}\Vert_{\mathcal{L}(Y_s,X_s)}(\delta_1\Vert \bm{u}\Vert_{X_s}+C_\epsilon\Vert \bm{u}\Vert_{X_{s-1}})
\]
If $\epsilon$ is chosen small enough that the norm of $L^{-1}S:X_{s-1}\to X_{s-1}$ is less than $\Vert L^{-1}\Vert_{\mathcal{L}(Y_s,X_s)}\delta_1$, which we know we can do from part $(i)$, then it follows that
\[
\Vert (L^{-1}S)^n\bm{u}\Vert_{X_s}\leq\Vert L^{-1}\Vert_{\mathcal{L}(Y_s,X_s)}^n(\delta_1^n\Vert \bm{u}\Vert_{X_s}+C_\epsilon n\delta_1^{n-1}\Vert \bm{u}\Vert_{X_{s-1}}).
\]
We prove this by induction. For $n=1$ it is clearly true. Assume it holds for $n-1$ then
\[
\Vert (L^{-1}S)^n\bm{u}\Vert_{X_s}\leq\Vert L^{-1}\Vert_{\mathcal{L}(Y_s,X_s)}(\delta_1\Vert (L^{-1}S)^{n-1}\bm{u}\Vert_{X_s}+C_\epsilon\Vert (L^{-1}S)^{n-1}\bm{u}\Vert_{X_{s-1}})
\]
where
\[
\Vert L^{-1}\Vert_{\mathcal{L}(Y_s,X_s)}\delta_1\Vert (L^{-1}S)^{n-1}\bm{u}\Vert_{X_s}\leq \Vert L^{-1}\Vert_{\mathcal{L}(Y_s,X_s)}^n(\delta_1^n\Vert \bm{u}\Vert_{X_s}+C_\epsilon (n-1)\delta_1^{n-1}\Vert \bm{u}\Vert_{X_{s-1}})
\]
by the induction assumption and 
\[
\Vert L^{-1}\Vert_{\mathcal{L}(Y_s,X_s)}C_\epsilon\Vert (L^{-1}S)^{n-1}\bm{u}\Vert_{X_{s-1}}\leq \Vert L^{-1}\Vert_{\mathcal{L}(Y_s,X_s)}^n C_\epsilon\delta_1^{n-1}\Vert \bm{u}\Vert_{X_{s-1}}
\]
by part $(i)$ of the proposition.
Since $\Vert \bm{u}\Vert_{X_{s-1}}\leq\Vert \bm{u}\Vert_{X_s}$ we get
\[
\Vert (L^{-1}S)^n\bm{u}\Vert_{X_s}\leq\Vert L^{-1}\Vert_{\mathcal{L}(Y_s,X_s)}^n(\delta_1^n+C_\epsilon n\delta_1^{n-1})\Vert \bm{u}\Vert_{X_s}
\]
If $\delta_1<\frac{1}{\Vert L^{-1}\Vert_{\mathcal{L}(Y_s,X_s)}}$ then clearly $\Vert L^{-1}\Vert_{\mathcal{L}(Y_s,X_s)}^n(\delta_1^n+C_\epsilon n\delta_1^{n-1})\to 0$ as $n\to \infty$ and there must exist some $N$ such that $(L^{-1}S)^N:X_s\to X_s$ is a contraction. 

For $m=3$ we have to use the same trick for the $\Vert (a_{ij}-\delta_{ij})\partial_{x_i}\partial_{x_j}u_k\Vert_{H^{s-2}(D;\mathbb{R})}$ terms. That is,
\[
\begin{aligned}
\Vert (a_{ij}-\delta_{ij})\partial_{x_i}\partial_{x_j}u_k\Vert_{H^{s-2}(D;\mathbb{R})}&\lesssim\sum_{l=1}^3\left[\Vert (a_{ij}-\delta_{ij})\partial_{x_l}\partial_{x_i}\partial_{x_j}u_k\Vert_{H^{s-3}(D;\mathbb{R})}\right.
\\
&\qquad\left.+\Vert \partial_{x_l}(a_{ij}-\delta_{ij})\partial_{x_i}\partial_{x_j}u_k\Vert_{H^{s-3}(D;\mathbb{R})}\right]
\\
&\qquad+\Vert (a_{ij}-\delta_{ij})\partial_{x_i}\partial_{x_j}u_k\Vert_{H^{s-3}(D;\mathbb{R})}.
\end{aligned}
\]
We estimate the terms of the first type in the following way
\[
\Vert (a_{ij}-\delta_{ij})\partial_{x_l}\partial_{x_i}\partial_{x_j}u_k\Vert_{H^{s-3}(D;\mathbb{R})}\lesssim \Vert a_{ij}-\delta_{ij}\Vert_{W^{1,4}(D;\mathbb{R})}\Vert \bm{u}\Vert_{X_s},
\]
where $\Vert a_{ij}-\delta_{ij}\Vert_{W^{1,4}(D;\mathbb{R})}$ can be made arbitrarily small. All the remaining terms can be estimated with $C_\epsilon\Vert \bm{u}\Vert_{X_{s-1}}$. Similarly, we can estimate the boundary terms by terms of the form $\Vert (a_{ij}-\delta_{ij})\partial_{x_l}\partial_{x_i}\partial_{x_j}u_k\Vert_{H^{s-3}(D;\mathbb{R})}$ and terms that can be estimated by $C_\epsilon\Vert \bm{u}\Vert_{X_{s-1}}$. The terms of the form $\Vert a_j\partial_{x_j}u_k\Vert_{H^{s-2}(D;\mathbb{R})}$ can also be estimated with $C_\epsilon\Vert \bm{u}\Vert_{X_{s-1}}$. Then we can use that
\[
\Vert \bm{u}\Vert_{X_{s-1}}\leq \xi \Vert \bm{u}\Vert_{X_s}+C_\xi \Vert \bm{u}\Vert_{X_{s-2}}
\]
holds for any $\xi>0$ (see e.g. Theorem 1.4.3.3 in \cite{Grisvard2011}). This allows us to prove 
\[
\Vert (L^{-1}S)^n\bm{u}\Vert_{X_s}\leq\Vert L^{-1}\Vert_{\mathcal{L}(Y_s,X_s)}^n(\delta_1^n\Vert \bm{u}\Vert_{X_s}+C_\epsilon n\delta_1^{n-1}\Vert \bm{u}\Vert_{X_{s-2}})
\]
for any $\delta_1>0$ (depending on $\epsilon$) using essentially the same induction argument as in the case $m=2$. This inequality proves the claim for $m=3$.
\end{proof}

\subsection{Weak Solution}
The invertibility of $L+S$ allows us to find a solution to \eqref{trans1:prob}--\eqref{trans3:prob} given an element in $Y_s$. However the element in $Y_s$ for which we want to find a solution depends on the solution to \eqref{orig:prob} itself. It does so through lower order terms though and thus the invertibility of $L+S$ can be used to find a solution of higher regularity. This requires a starting point and thus we need a weak solution.
\begin{definition}\label{def:weaksol}
An element 
\[
(\tilde{u}_1,\tilde{u}_2,\tilde{u}_3)\in Z_1,
\] where 
\[Z_s:=\{(v_1,v_2,v_3)\in H^s(T;\mathbb{R}^3)|v_k=0 \text{ on } R_i\quad  \forall\, k,i \text{ except } k=i=1,2\},\] 
is said to be a weak solution of the problem \eqref{loc1:prob}--\eqref{loc3:prob} if it satisfies
\begin{equation}\label{weaksol}
\sum_{k=1}^3\int_T\nabla_{\bm{y}} \tilde{u}_k\nabla_{\bm{y}} w_k dy=\sum_{i=1}^2\int_{R_i} h_iw_i dS(y)-\sum_{k=1}^3\int_T g_k w_k dy
\end{equation}
for all $(w_1,w_2,w_3)\in Z_1$.
\end{definition}
That such a solution exists is easily proven using the Riesz representation theorem.
\begin{lemma}\label{lemma:transweaksol}
If $g_k\in L^2(T;\mathbb{R})$, $k=1,2,3$ and $h_i\in L^2(R_i;\mathbb{R})$, $i=1,2$ then the problems \eqref{loc1:prob}--\eqref{loc3:prob} have a unique weak solution $(\tilde{u}_1,\tilde{u}_2,\tilde{u}_3)\in X_1$.
\end{lemma}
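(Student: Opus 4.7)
The plan is a direct application of the Riesz representation theorem to the bilinear form induced by the left-hand side of \eqref{weaksol} on the Hilbert space $Z_1$ (noting that the target space is written as $X_1$ in the lemma statement, but by the definitions the natural space for the weak solution of \eqref{loc1:prob}--\eqref{loc3:prob} is $Z_1$; these are identified under the change of variables $\bm{\Phi}$).

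First I would equip $Z_1$ with the bilinear form
\[
\langle \bm{u}, \bm{w} \rangle := \sum_{k=1}^3 \int_T \nabla_{\bm{y}} u_k \cdot \nabla_{\bm{y}} w_k\, d\bm{y},
\]
and verify that it is a genuine inner product making $Z_1$ into a Hilbert space. Since $Z_1$ is a closed subspace of $H^1(T;\mathbb{R}^3)$ (the boundary conditions are preserved under $H^1$-limits via the trace operator), the only issue is positive definiteness, i.e., a Poincaré-type inequality
\[
\|v_k\|_{L^2(T)} \lesssim \|\nabla_{\bm{y}} v_k\|_{L^2(T)}, \qquad k=1,2,3.
\]
This holds because each component $v_k$ vanishes on at least one face $R_i$ of positive surface measure, and the functions in play are effectively supported in a bounded region of $T$ inherited from the cutoff $\eta$; integration along straight lines from the vanishing face then gives the inequality in the usual way. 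Consequently the Dirichlet seminorm is equivalent to the $H^1$-norm on $Z_1$.

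Next I would define the linear functional $F : Z_1 \to \mathbb{R}$ by
\[
F(\bm{w}) := \sum_{i=1}^2 \int_{R_i} h_i w_i\, dS(\bm{y}) - \sum_{k=1}^3 \int_T g_k w_k\, d\bm{y},
\]
and show that it is bounded with respect to $\langle \cdot, \cdot \rangle$. The volume terms are estimated by Cauchy--Schwarz followed by the Poincaré inequality above:
\[
\left| \int_T g_k w_k\, d\bm{y} \right| \leq \|g_k\|_{L^2(T)} \|w_k\|_{L^2(T)} \lesssim \|g_k\|_{L^2(T)}\, \langle \bm{w},\bm{w}\rangle^{1/2}.
\]
For the boundary terms, the trace operator $H^1(T) \to H^{1/2}(R_i) \hookrightarrow L^2(R_i)$ gives
\[
\left| \int_{R_i} h_i w_i\, dS \right| \leq \|h_i\|_{L^2(R_i)} \|w_i\|_{L^2(R_i)} \lesssim \|h_i\|_{L^2(R_i)}\, \langle \bm{w},\bm{w}\rangle^{1/2}.
\]
Thus $F \in Z_1^*$. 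The Riesz representation theorem then produces a unique $\tilde{\bm{u}} = (\tilde{u}_1,\tilde{u}_2,\tilde{u}_3) \in Z_1$ with $\langle \tilde{\bm{u}}, \bm{w}\rangle = F(\bm{w})$ for all $\bm{w} \in Z_1$, which is exactly \eqref{weaksol}, and uniqueness of the weak solution is automatic.

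The only genuinely delicate step is the Poincaré inequality: one must make sure that the geometry of $T$ (which, as the preimage of the prism-like $D$ under $\bm{\Phi}$, extends in the $y_3$-direction) and the mixed vanishing pattern in $Z_1$ together allow the Poincaré estimate to hold uniformly for each component. Since $\bm{\Phi}$ agrees with the identity outside a neighbourhood of the origin and the right-hand sides $g_k, h_i$ arise from functions multiplied by the compactly supported cutoff $\eta$, it is natural to restrict attention to functions supported in a bounded slab, where the standard Poincaré inequality with Dirichlet data on a subset of the boundary applies component-by-component. Everything else in the proof is a routine verification.
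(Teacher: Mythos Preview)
Your approach via the Riesz representation theorem is exactly what the paper intends; the paper itself simply states that the existence ``is easily proven using the Riesz representation theorem'' and gives no further details. One small remark: the Poincar\'e inequality does not require any appeal to compact support coming from $\eta$, since each component of $\bm{w}\in Z_1$ vanishes on at least one face $R_i$ and $T=\bm{\Phi}^{-1}(D)$ is bounded in the directions transverse to those faces (because $D=(0,d)^2\times\mathbb{R}$ and $\bm{\Phi}$ equals the identity outside a compact set), so integration from the vanishing face already yields the inequality for arbitrary elements of $Z_1$.
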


It is also known \cite[Theorem 10.1]{Zajaczkowski1988} that the original problem \eqref{orig:prob} has a weak solution.
\begin{lemma}\label{lemma:weaksol} There exists a unique function $\bm{u}$ in $H^1(\Omega;\mathbb{R}^3)$ such that 
$\bm{u}\cdot\bm{\tau}^{(i)}_j=0$ on $\partial\Omega_i$ for all i and j=1,2, and
\begin{equation}\label{zweaksol}
\sum_{i=1}^3\int_\Omega \nabla_{\bm{y}} \bm{u}\cdot \bm{e}_i\cdot\nabla_{\bm{y}} \bm{w}\cdot \bm{e}_i dy=-\int_{\partial\Omega} (\bm{n}\cdot \bm{u})(\bm{n}\cdot \bm{w}) \nabla_{\bm{y}}\cdot{\bm{n}}dS(y)-\int_\Omega \bm{f}\cdot\bm{w}dy
\end{equation}
for all $\bm{w}\in H^1(\Omega;\mathbb{R}^3)$ such that
$\bm{w}\cdot\bm{\tau}^{(i)}_j=0$ on $\partial\Omega_i$ for i=1,2,3 and j=1,2, and $\bm{f}$ satisfying $\divv \bm{f}=0$.
Moreover
\begin{equation}\label{eq:weaksolest}
\Vert \bm{u}\Vert_{H^1(\Omega;\mathbb{R}^3)}\lesssim \Vert \bm{f}\Vert_{L^2(\Omega;\mathbb{R}^3)}.
\end{equation}
\end{lemma}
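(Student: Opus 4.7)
The plan is to apply the Lax--Milgram theorem (or, equivalently, a Fredholm argument) on the closed subspace
\[
V:=\{\bm{w}\in H^1(\Omega;\mathbb{R}^3):\bm{w}\cdot\bm{\tau}^{(i)}_j=0\text{ on }\partial\Omega_i,\ i\in\{+,-,0\},\ j\in\{1,2\}\}
\]
of $H^1(\Omega;\mathbb{R}^3)$. The identity \eqref{zweaksol} is exactly $a(\bm{u},\bm{w})=L(\bm{w})$ for all $\bm{w}\in V$, where
\[
a(\bm{u},\bm{w}):=\sum_{i=1}^3\int_\Omega\partial_i\bm{u}\cdot\partial_i\bm{w}\,dy+\int_{\partial\Omega}(\bm{u}\cdot\bm{n})(\bm{w}\cdot\bm{n})\,\divv\bm{n}\,dS,\qquad L(\bm{w}):=-\int_\Omega\bm{f}\cdot\bm{w}\,dy.
\]

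First I would verify that $V$ is closed in $H^1(\Omega;\mathbb{R}^3)$ (by continuity of the trace) and that $a$ and $L$ are bounded; boundedness of $a$ on $V\times V$ follows from the Cauchy--Schwarz and $H^1$-trace inequalities together with the bound $\|\divv\bm{n}\|_{L^\infty(\partial\Omega)}<\infty$, which holds on each $C^4$-face of $\partial\Omega$.

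The main step is coercivity of $a$ on $V$. I would first establish the Poincar\'e-type estimate $\|\bm{w}\|_{L^2(\Omega)}\lesssim\|\nabla\bm{w}\|_{L^2(\Omega)}$ for $\bm{w}\in V$ via the standard Rellich contradiction argument: any $\bm{w}\in V$ with $\nabla\bm{w}\equiv 0$ is a constant vector parallel to the outward unit normal on each of $\partial\Omega_0,\partial\Omega_\pm$, and because these boundary pieces meet at right angles the three families of normals span $\mathbb{R}^3$, forcing the constant to vanish. Next I would note that the boundary integral in $a$ defines a compact quadratic form on $V$, since the map $\bm{u}\mapsto\bm{u}\cdot\bm{n}|_{\partial\Omega}$ from $H^1(\Omega)$ to $L^2(\partial\Omega)$ factors through the compact embedding $H^{1/2}(\partial\Omega)\hookrightarrow L^2(\partial\Omega)$. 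Writing $a=\tilde a+c$ with $\tilde a(\bm{u},\bm{u})=\|\nabla\bm{u}\|_{L^2}^2$ (coercive on $V$ by Poincar\'e) and $c$ compact, the induced operator $A\colon V\to V'$ is a compact perturbation of an isomorphism and hence Fredholm of index zero.

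By the Fredholm alternative, existence now reduces to uniqueness of the homogeneous problem, which I expect to be the main obstacle; the boundary integral in $a$ need not have a definite sign, so testing $a(\bm{u},\bm{u})=0$ against $\bm{u}$ itself does not immediately give $\bm{u}=0$. A candidate route: elliptic regularity up to the $C^4$-faces promotes a homogeneous weak solution $\bm{u}\in V$ to a classical one, which then satisfies $\Delta\bm{u}=0$ in $\Omega$, $\bm{u}\cdot\bm{\tau}^{(i)}_j=0$ on $\partial\Omega_i$, and $\divv\bm{u}=0$ on $\partial\Omega$ (the last being the natural boundary condition read off via the identity $\divv\bm{u}|_{\partial\Omega}=\partial_n(\bm{u}\cdot\bm{n})+(\bm{u}\cdot\bm{n})\divv\bm{n}$ that holds whenever $\bm{u}\cdot\bm{\tau}=0$ on $\partial\Omega$); one then introduces the vorticity $\bm{\omega}=\curl\bm{u}$, which becomes harmonic with vanishing normal trace, and concludes via a scalar-potential representation that $\bm{u}\equiv 0$. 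Alternatively one can invoke the uniqueness assertion in \cite[Theorem 10.1]{Zajaczkowski1988} directly. Finally, the estimate \eqref{eq:weaksolest} follows from the Lax--Milgram / Fredholm bound $\|\bm{u}\|_V\lesssim\|L\|_{V'}\lesssim\|\bm{f}\|_{L^2(\Omega;\mathbb{R}^3)}$ combined with the Poincar\'e estimate.
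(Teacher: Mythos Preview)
The paper does not supply its own proof of this lemma; it simply records that the result is known and cites \cite[Theorem~10.1]{Zajaczkowski1988}. Your proposal is therefore more ambitious than what the paper does, and the overall Fredholm framework you set up (closed subspace~$V$, bounded bilinear form, Poincar\'e on~$V$ via the normals spanning $\mathbb{R}^3$, compactness of the boundary quadratic form, index-zero perturbation) is correct and is indeed the standard way one would argue.

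The one genuine soft spot is the uniqueness step, which you yourself flag. Your vorticity route is not complete as written: from $\Delta\bm{u}=0$ and $\divv\bm{u}=0$ on $\partial\Omega$ you first need the maximum principle to get $\divv\bm{u}=0$ in $\Omega$ (since $\divv\bm{u}$ is harmonic), and then for $\bm{\omega}=\curl\bm{u}$ you must justify both the boundary condition $\bm{\omega}\cdot\bm{n}=0$ (this does follow from $\bm{u}\times\bm{n}=0$ via Stokes on boundary patches) and, more importantly, why a harmonic field with $\divv\bm{\omega}=0$ and $\bm{\omega}\cdot\bm{n}=0$ must vanish; that last step requires an additional integration-by-parts identity or an appeal to the triviality of tangent harmonic fields on a simply connected domain, neither of which you spell out. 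There is also a mild circularity risk in invoking ``elliptic regularity up to the faces'' in a paper whose Section~4 is precisely devoted to establishing such regularity near the edges; you would need to make clear that only interior/smooth-face regularity is used here. Your fallback of citing \cite[Theorem~10.1]{Zajaczkowski1988} for uniqueness is exactly what the paper does for the entire lemma, so that route is safe.
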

Inserting this solution as $\bm{u}$ (extended arbitrarily since every term involving $\bm{u}$ is multiplied with some smooth function with support in $\omega$) on the right hand sides of problems \eqref{loc1:prob}--\eqref{loc3:prob} gives us $g_k\in L^2(T;\mathbb{R}), k=1,2,3$ and $h_i\in L^2(R_i;\mathbb{R})$, $i=1,2$ so by Lemma \ref{lemma:transweaksol} the problems have a weak solution. It is natural to suspect this solution to be $(\eta\bm{u}\cdot \bm{n}_1,\eta\bm{u}\cdot \bm{n}_2,\eta\bm{u}\cdot \bm{n}_3)$ and that this is indeed the case can be shown by direct calculation.
\begin{lemma}\label{lemma:equivweaksol}
The weak solution of \eqref{loc1:prob}--\eqref{loc3:prob} obtained by inserting the the weak solution of \eqref{orig:prob} in the terms on the right hand sides is $(\eta\bm{u}\cdot \bm{n}_1,\eta\bm{u}\cdot \bm{n}_2,\eta\bm{u}\cdot \bm{n}_3)$.
\end{lemma}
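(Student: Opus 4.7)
The plan is to invoke the uniqueness part of Lemma \ref{lemma:transweaksol} and simply verify that the candidate $\bm{\tilde{u}}:=(\eta\bm{u}\cdot\bm{n}_1,\eta\bm{u}\cdot\bm{n}_2,\eta\bm{u}\cdot\bm{n}_3)$ satisfies the weak formulation \eqref{weaksol}. The first preparatory step is to show $\bm{\tilde{u}}\in Z_1$. Since $\{\bm{n}_1,\bm{n}_2,\bm{n}_3\}$ is an orthonormal frame with $\bm{n}_3=\bm{n}_1\times\bm{n}_2$, the vector $\bm{n}_k$ is tangent to $\partial\Omega_l$ whenever $k\neq l$ for $k,l\in\{1,2\}$, and $\bm{n}_3$ is tangent to both $\partial\Omega_1$ and $\partial\Omega_2$. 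Hence the boundary condition $\bm{u}\cdot\bm{\tau}^{(i)}_j=0$ from \eqref{orig:BC2} forces $\eta\bm{u}\cdot\bm{n}_k=0$ on $R_i$ for every $(k,i)$ other than $k=i\in\{1,2\}$, while the $R_3,R_4$ conditions are ensured by the support of $\eta$.

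Given any test triple $(w_1,w_2,w_3)\in Z_1$, I would form the test vector field $\bm{W}:=\sum_{k=1}^{3}w_k\bm{n}_k$, extended by zero outside $T$. The same tangency considerations show $\bm{W}\cdot\bm{\tau}^{(i)}_j=0$ on $\partial\Omega_i$, so $\bm{W}$ is an admissible test function in \eqref{zweaksol}. The core of the proof is then to expand
\[
\sum_{k=1}^{3}\int_T\nabla_{\bm{y}}(\eta\bm{u}\cdot\bm{n}_k)\cdot\nabla_{\bm{y}}w_k\,dy
\]
via the product rule, using the decomposition $\bm{u}=\sum_l(\bm{u}\cdot\bm{n}_l)\bm{n}_l$ to pass between the local frame $\{\bm{n}_k\}$ and the Cartesian frame in which \eqref{zweaksol} is written. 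The leading-order part matches $\sum_i\int_\Omega\nabla_{\bm{y}}u_i\cdot\nabla_{\bm{y}}(\eta W_i)\,dy$ up to terms in which $\nabla_{\bm{y}}\eta$ has landed on $\bm{W}$ or on $\bm{u}$, and by \eqref{zweaksol} applied with test function $\eta\bm{W}$ this yields the bulk term $-\int_\Omega\eta\bm{f}\cdot\bm{W}\,dy$ together with the boundary contribution $-\int_{\partial\Omega}(\bm{u}\cdot\bm{n})(\eta\bm{W}\cdot\bm{n})(\nabla_{\bm{y}}\cdot\bm{n})\,dS$.

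Collecting, the boundary piece on $R_i$ for $i\in\{1,2\}$ uses $\bm{W}\cdot\bm{n}_i=w_i$, and together with the contributions coming from the $\bm{u}\cdot\nabla_{\bm{y}}\eta$ terms on the boundary it reproduces exactly $\int_{R_i}h_iw_i\,dS$ for $h_i=-|\nabla_{\bm{y}}\Phi_i|(\tilde{u}_i\nabla_{\bm{y}}\cdot\bm{n}_i+\bm{u}\cdot\nabla_{\bm{y}}\eta)$, the factor $|\nabla_{\bm{y}}\Phi_i|$ encoding the change of frame from $\bm{n}_i$ to the conormal $\nabla_{\bm{y}}\Phi_i$ used in \eqref{loc1:BC1}, \eqref{loc2:BC2}. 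The residual bulk terms, which include the cross terms $2\partial_{y_j}\bm{\tilde{u}}\cdot\partial_{y_j}\bm{n}_k$ and $\bm{\tilde{u}}\cdot\Delta_{\bm{y}}\bm{n}_k$ generated by the product-rule expansion when switching frames, together with the $\nabla_{\bm{y}}\eta$- and $\Delta_{\bm{y}}\eta$-pieces, assemble into $-\sum_k\int_T g_kw_k\,dy$ by the definition of $g_k$. Uniqueness from Lemma \ref{lemma:transweaksol} then identifies $\bm{\tilde{u}}$ with the weak solution.

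The main obstacle is purely bookkeeping: one must verify that every curvature-type cross term produced by the product-rule expansion is absorbed precisely by one of the two summands $2\sum_i\partial_{y_i}\bm{\tilde{u}}\cdot\partial_{y_i}\bm{n}_k$ and $\bm{\tilde{u}}\cdot\Delta_{\bm{y}}\bm{n}_k$ in the definition of $g_k$, and that the Jacobian factor $|\nabla_{\bm{y}}\Phi_i|$ appears exactly where \eqref{loc1:BC1}--\eqref{loc2:BC2} require it. There is no conceptual difficulty — this is the ``direct calculation'' referred to in the statement — but the identification is delicate because it involves simultaneously reorganising bulk and boundary contributions produced from a single test identity into the three separated weak problems \eqref{loc1:prob}--\eqref{loc3:prob}.
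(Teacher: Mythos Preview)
Your proposal is correct and is precisely the ``direct calculation'' the paper alludes to but does not carry out: the paper gives no proof beyond the sentence preceding the lemma. Your outline---verifying membership in $Z_1$, building the admissible test field $\bm{W}=\sum_k w_k\bm{n}_k$, applying \eqref{zweaksol} with $\eta\bm{W}$, and tracking the curvature and $\nabla\eta$ terms into the definitions of $g_k$ and $h_i$---is exactly the intended route, and your identification of the bookkeeping as the only real work is accurate.
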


\subsection{Higher Regularity}
The only step remaining to show that our weak solution has the desired regularity.
\begin{proposition}\label{Prop:Yspace} Let $\bm{f}\in H^{s-1}(\Omega;\mathbb{R}^3)$ be a function satisfying $\bm{n}_3\cdot\bm{f}=0$ in $\omega$ along the edge $\bar{\partial\Omega_1}\cap\bar{\partial\Omega_2}$. If we have a solution  $\bm{u}\in H^s(\Omega;\mathbb{R}^3)$ of \eqref{orig:prob} and insert it into \eqref{eq:gkx} and \eqref{eq:hkx} we get $(g_1^{(x)},g_2^{(x)},g_3^{(x)},h_1^{(x)},h_2^{(x)})\in Y_{s+1}$.
\end{proposition}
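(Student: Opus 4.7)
The plan is to verify both the Sobolev regularity contained in the space $Y_{s+1}$ and the compatibility conditions that distinguish $Y_{s+1}$ from the pure product space. Regularity is immediate: smoothness of $\bm{\Phi}$, $\eta$, and the frame $\bm{n}_k$, combined with $\bm{u}\in H^s(\Omega;\mathbb{R}^3)$ and $\bm{f}\in H^{s-1}(\Omega;\mathbb{R}^3)$, yield $g_k\in H^{s-1}(T;\mathbb{R})$ and (after taking traces) $h_i\in H^{s-1/2}(R_i;\mathbb{R})$; composing with $\bm{\Phi}^{-1}$ gives $g_k^{(x)}\in H^{s-1}(D;\mathbb{R})$ and $h_i^{(x)}\in H^{s-1/2}(\Gamma_i;\mathbb{R})$, which are exactly $H^{(s+1)-2}$ and $H^{(s+1)-3/2}$. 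All the work is in the compatibility conditions.

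Before checking those, I extract the pointwise behavior of $\bm{u}$ on the edge $\overline{\partial\Omega_1}\cap\overline{\partial\Omega_2}$ forced by \eqref{orig:BC2}. On $\partial\Omega_i$, $i=1,2$, the condition $\bm{u}\cdot\bm{\tau}_j^{(i)}=0$ forces $\bm{u}\parallel\bm{n}_i$, so $\tilde{u}_k=0$ for $k\neq i$. Consequently all three $\tilde{u}_1,\tilde{u}_2,\tilde{u}_3$ vanish on the edge, hence $\bm{u}\equiv 0$ there, and because $\tilde{u}_3$ vanishes on both smooth faces while $T\partial\Omega_1\cup T\partial\Omega_2$ spans $\mathbb{R}^3$ at every edge point, $\nabla\tilde{u}_3=0$ on the edge as well. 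The conditions $h_1^{(x)}|_{\partial\Gamma_1}=h_2^{(x)}|_{\partial\Gamma_2}=0$ now follow immediately: outside the image of the edge the trace of $h_i^{(x)}$ vanishes because $\eta\circ\bm{\Phi}^{-1}$ is supported near the origin, and on the edge portion both summands of $h_i=-|\nabla_{\bm{y}}\Phi_i|(\tilde{u}_i\,\nabla_{\bm{y}}\cdot\bm{n}_i+\bm{u}\cdot\nabla_{\bm{y}}\eta)$ vanish ($\tilde{u}_i=0$ because it vanishes on the other face, and $\bm{u}=0$).

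The nontrivial compatibility is $g_3^{(x)}|_{S_1}=0$; the other $S_i$ lie outside the support of $\eta$. The key is that $g_3$ was constructed as $g_3=\Delta_{\bm{y}}\tilde{u}_3=\Delta_{\bm{y}}(\eta\,\bm{u}\cdot\bm{n}_3)$. Since $\tilde{u}_3=0$ and $\nabla\tilde{u}_3=0$ on the edge, the Leibniz expansion of $\Delta_{\bm{y}}(\eta\,\bm{u}\cdot\bm{n}_3)$ collapses to $\eta\,\Delta_{\bm{y}}(\bm{u}\cdot\bm{n}_3)$ there, and a second Leibniz expansion gives
\[
\Delta_{\bm{y}}(\bm{u}\cdot\bm{n}_3)=(\Delta_{\bm{y}}\bm{u})\cdot\bm{n}_3+2\sum_i\partial_{y_i}\bm{u}\cdot\partial_{y_i}\bm{n}_3+\bm{u}\cdot\Delta_{\bm{y}}\bm{n}_3.
\]
The first term equals $\bm{f}\cdot\bm{n}_3$, which vanishes on the edge by hypothesis, and the third vanishes because $\bm{u}=0$ there. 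The remaining obstacle, therefore, is to show the cross term $\sum_i\partial_{y_i}\bm{u}\cdot\partial_{y_i}\bm{n}_3$ vanishes on the edge; this is where the curvature-line hypothesis enters.

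To handle that term, I write $\bm{u}=\sum_j\tilde{u}_j\bm{n}_j$ and use $\tilde{u}_j=0$ and $\nabla\tilde{u}_3=0$ on the edge to get $\partial_{y_i}\bm{u}=(\partial_{y_i}\tilde{u}_1)\bm{n}_1+(\partial_{y_i}\tilde{u}_2)\bm{n}_2$ there; moreover $\tilde{u}_1=0$ on $\partial\Omega_2$ forces $\nabla\tilde{u}_1\parallel\bm{n}_2$ on the edge, and symmetrically $\nabla\tilde{u}_2\parallel\bm{n}_1$. Substituting and using $\bm{n}_k\cdot\bm{n}_3\equiv 0$ to rewrite $\bm{n}_k\cdot\partial_\bullet\bm{n}_3$ as $-\bm{n}_3\cdot\partial_\bullet\bm{n}_k$, the cross term collapses to a linear combination of $\bm{n}_3\cdot(\bm{n}_2\cdot\nabla)\bm{n}_1$ and $\bm{n}_3\cdot(\bm{n}_1\cdot\nabla)\bm{n}_2$ on the edge. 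Each of these is, up to a sign, a value of the second fundamental form $II_{\partial\Omega_1}(\bm{n}_2,\bm{n}_3)$ or $II_{\partial\Omega_2}(\bm{n}_1,\bm{n}_3)$, and both vanish because $\bm{n}_3$ is, by assumption, a principal direction of $\partial\Omega_1$ (equivalently of $\partial\Omega_2$) at the edge. This completes the verification and shows $(g_1^{(x)},g_2^{(x)},g_3^{(x)},h_1^{(x)},h_2^{(x)})\in Y_{s+1}$.
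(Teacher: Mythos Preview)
Your proof is correct and follows essentially the same route as the paper: both reduce the $h_i^{(x)}$ conditions to $\bm{u}=0$ on the edge, and both handle $g_3^{(x)}|_{S_1}=0$ by killing every term except the mixed term $\sum_i\partial_{y_i}\bm{\tilde{u}}\cdot\partial_{y_i}\bm{n}_3$, which is then shown to vanish via the curvature-line hypothesis. The only cosmetic difference is that you stay in the $\bm{y}$-coordinates and phrase the vanishing as $II_{\partial\Omega_1}(\bm{n}_2,\bm{n}_3)=II_{\partial\Omega_2}(\bm{n}_1,\bm{n}_3)=0$, while the paper passes to the $\bm{x}$-coordinates and the shape operator $\sigma_1(\bm{n}_3)\cdot\bm{n}_2$; these are the same computation.
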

\begin{proof}
That we have the correct regularity is trivial, so we only have to check the conditions. As before we assume enough regularity for all conditions to be well defined. If the regularity is lower the only difference is that we have to check less conditions.
The support of $\eta$ also allows us to only be concerned with the conditions along $S_1$. 

The fact that $\bm{u}= 0$ along the edge immediately implies that $h_1^{(x)}=h_2^{(x)}=0$ there since they only depend on $\bm{u}$ and not its derivatives.

Showing that $g_3^{(x)}=0$ along the edge is not quite as trivial. Recall that $g_3^{(x)}=g_3\circ\bm{\Phi}^{-1}$ where
\[
g_3=(\bm{\tilde{f}}+2(\nabla_{\bm{y}}\eta\cdot\nabla_{\bm{y}})\bm{u}+ \bm{u}\Delta_{\bm{y}}\eta)\cdot \bm{n}_3+2\sum_i\partial_{y_i}\bm{\tilde{u}}\cdot\partial_{y_i}\bm{n}_3+\bm{\tilde{u}}\cdot\Delta_{\bm{y}} \bm{n}_3.
\]
We let the composition with $\bm{\Phi}^{-1}$ be implicit in the following and show that each term vanishes along the edge.
We have $\tilde{\bm{f}}\cdot\bm{n}_3=0$ by assumption.
Since $\bm{u}=0$ along the edge and all the derivatives of  $\bm{u}\cdot\bm{n}_3$ vanish along the edge we get
\[
(\nabla_{\bm{y}}\eta\cdot\nabla_{\bm{y}})(\bm{u})\cdot\bm{n}_3=(\nabla_{\bm{y}}\eta\cdot\nabla_{\bm{y}})(\bm{u}\cdot\bm{n}_3)=0.
\]
Moreover, $\bm{u}\cdot \bm{n}_3\Delta_{\bm{y}}\eta=0$ and $\bm{\tilde{u}}\cdot\Delta_{\bm{y}} \bm{n}_3=0$ since $\bm{u}=0$ along the edge. 
We are left with the term
\[
\sum_i\partial_{y_i}\bm{\tilde{u}}\cdot\partial_{y_i}\bm{n}_3=\sum_{i,j}a_{ij}\partial_{x_i}\bm{\tilde{u}}\cdot\partial_{x_j}\bm{n}_3.
\]
Using that $\bm{u}=0$ on the edge we get
\[
\sum_{i,j}a_{ij}\partial_{x_i}\bm{\tilde{u}}\cdot\partial_{x_j}\bm{n}_3=\sum_{i,j,k}a_{ij}\partial_{x_i}(v_k\bm{n}_k)\cdot\partial_{x_j}\bm{n}_3=\sum_{i,j,k}\bm{n}_k\cdot a_{ij}\partial_{x_j}\bm{n}_3\partial_{x_i}v_k.
\]
However, all the derivatives of the $v_k$:s vanish at the edge except $\partial_{x_1}v_2$ and $\partial_{x_2}v_1$. So the only remaining terms are
\[
\sum_{j}\bm{n}_1\cdot a_{2j}\partial_{x_j}\bm{n}_3\partial_{x_2}v_1+\sum_{j}\bm{n}_2\cdot a_{1j}\partial_{x_j}\bm{n}_3\partial_{x_1}v_2.
\]
We show how the first term vanishes as both can be treated similarly. Note that
\[
\sum_j a_{2j}\partial_{x_j}=\nabla_{\bm{y}}\Phi_2\cdot\nabla_{\bm{y}},
\]
where $\nabla_y \bm{\Phi}_2$ is parallel to $\bm{n}_2$ at the edge. This means the factor in front of $\partial_{x_2}v_1$ is proportional to $\bm{n_1}(\bm{n}_2\cdot\nabla_{\bm{y}})\bm{n}_3$. Furthermore, since $\bm{n}_1\cdot \bm{n}_3=0$ we get
\[
\bm{n_1}\cdot[(\bm{n}_2\cdot\nabla_{\bm{y}})\bm{n}_3]=-\bm{n_3}\cdot[(\bm{n}_2\cdot\nabla_{\bm{y}})\bm{n}_1].
\]
The right hand side here is precisely
\[
-\sigma_1(\bm{n}_2)\cdot\bm{n}_3=-\sigma_1(\bm{n}_3)\cdot\bm{n}_2
\]
where $\sigma_1(\bm{z})=\bm{z}\cdot\nabla_{\bm{y}}\bm{n_1}$ is the shape operator for $R_1$. This vanishes since a curve with unit tangent $\bm{\tau}$ satisfies $S_1(\bm{\tau})=\lambda\bm{\tau}$, for some $\lambda$, along the curve if and only if it is a curvature line. Recall that by the the definition of $\Omega$ in the introduction $\overline{\partial\Omega}_1\cap\overline{\partial\Omega}_2$ is a curvature line and $\bm{n}_3$ is the unit tangent. Hence
\[
-\sigma_1(\bm{n}_3)\cdot\bm{n}_2=-\lambda \bm{n}_3\cdot\bm{n}_2=0,
\]
which concludes the proof.
\end{proof}
Finally we can prove the following theorem, which gives us a solution $\bm{u}$ to $\eqref{orig:prob}$. With this solution we then obtain a solution $\bm{w}$ to the $(\mathcal{DCP})$ by setting $\bm{w}=-\curl\bm{u}$.
\begin{theorem}\label{thm:divcurl}If $\bm{f}\in H^{\sigma+1}(\Omega;\mathbb{R}^3)$ such that $\bm{f}\cdot \bm{\hat{\tau}}_\pm\vert_{\overline{\partial\Omega}_\pm\cap\overline{\partial\Omega}_0}=0$ where $\bm{\hat{\tau}}_\pm$ is the tangent vector to $\overline{\partial\Omega}_\pm\cap\overline{\partial\Omega}_0$, then there exists a unique solution $\bm{u}\in H^{\sigma+3}(\Omega;\mathbb{R}^3)$ to \eqref{orig:prob}.
\end{theorem}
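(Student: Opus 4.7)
The plan is to combine the results of this section in a localization-and-bootstrap argument. I would start by invoking Lemma \ref{lemma:weaksol} to obtain the unique weak solution $\bm{u}\in H^1(\Omega;\mathbb{R}^3)$ of \eqref{orig:prob} with the required tangential boundary conditions. The remaining task is to upgrade this weak solution to $H^{\sigma+3}(\Omega;\mathbb{R}^3)$; uniqueness at the higher regularity then follows from uniqueness at the $H^1$ level.

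Next, I would cover $\overline{\Omega}$ by finitely many open sets of three types: interior neighbourhoods, neighbourhoods meeting exactly one of the faces $\partial\Omega_-$, $\partial\Omega_+$, $\partial\Omega_0$, and neighbourhoods containing a point on one of the edges $\overline{\partial\Omega}_\pm\cap\overline{\partial\Omega}_0$, and pick a subordinate partition of unity. For the first two types, the localized problem reduces to a Poisson problem with mixed Dirichlet--Neumann data in either the whole space or a smooth half-space, so standard elliptic regularity theory yields the desired local $H^{\sigma+3}$ bound. The new content lies in the edge neighbourhoods.

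For an edge neighbourhood, I would apply the change of variables $\bm{\Phi}$ and the cutoff $\eta$ constructed earlier, with $\epsilon$ chosen small enough that Proposition \ref{prop:contraction} together with Lemma \ref{lemma:invertibility} makes $L+S\colon X_s\to Y_s$ invertible for $s\in\{1+\sigma,2+\sigma,3+\sigma\}$. By Lemma \ref{lemma:equivweaksol}, the localization $(\eta\bm{u}\cdot\bm{n}_1,\eta\bm{u}\cdot\bm{n}_2,\eta\bm{u}\cdot\bm{n}_3)$ is already a weak solution of the transformed problem \eqref{trans1:prob}--\eqref{trans3:prob}. Now bootstrap: starting from $\bm{u}\in H^1$, Proposition \ref{Prop:Yspace} places the right-hand sides $(g_1^{(x)},g_2^{(x)},g_3^{(x)},h_1^{(x)},h_2^{(x)})$ in $Y_{1+\sigma}$, where the hypothesis $\bm{f}\cdot\bm{\hat{\tau}}_\pm=0$ on the edge together with the curvature-line condition gives exactly the compatibility $g_3^{(x)}|_{S_i}=0$ needed there. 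Applying $(L+S)^{-1}$ produces a solution in $X_{1+\sigma}$, which by weak uniqueness must agree with the localization of $\bm{u}$. Iterating the argument twice more with the improved regularity (first obtaining $X_{2+\sigma}$, then $X_{3+\sigma}$) and summing against the partition of unity delivers $\bm{u}\in H^{\sigma+3}(\Omega;\mathbb{R}^3)$.

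The main obstacle is the bookkeeping in the bootstrap: at each iteration one must check that the products appearing in $g_k^{(x)}$ and $h_i^{(x)}$ (which involve $\bm{u}$ and its first derivatives, the coefficients $a_{ij}$, and the cutoffs) lie in the appropriate Sobolev spaces, and that the compatibility conditions defining $Y_s$ remain intact at the higher regularity level. The restriction $\sigma>1/2$ is what makes the required product estimates work, while the curvature-line assumption on $\overline{\partial\Omega}_\pm\cap\overline{\partial\Omega}_0$ is precisely the ingredient that guarantees the condition $g_3^{(x)}|_{S_i}=0$ once $s>3$; without it the final bootstrap step would fail to place the data in $Y_{3+\sigma}$.
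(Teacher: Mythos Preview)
Your proposal is correct and follows essentially the same approach as the paper: start from the unique weak $H^1$ solution of Lemma~\ref{lemma:weaksol}, localize via a partition of unity, and on each edge neighbourhood bootstrap using the invertibility of $L+S$ on $X_s\to Y_s$ for $s=1+\sigma,2+\sigma,3+\sigma$ together with Proposition~\ref{Prop:Yspace} and the weak-uniqueness identification via Lemma~\ref{lemma:equivweaksol}. One small imprecision: the compatibility $g_3^{(x)}|_{S_i}=0$ is not needed at the first step into $Y_{1+\sigma}$ (only the conditions on $h_i^{(x)}$ matter there), but---as you correctly note at the end---it becomes essential only at the final step into $Y_{3+\sigma}$, where the curvature-line hypothesis and $\bm{f}\cdot\bm{\hat{\tau}}_\pm=0$ are genuinely used.
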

\begin{proof}Begin by covering $\Omega$ with open sets $\{\omega_{\bm{y}}\}_{\bm{y}\in \Omega}$ small enough for $\omega_{\bm{y}}$, $U_{\bm{y}}$, and $\epsilon_{\bm{y}}$ to satisfy all the properties required for the results above to be applicable. Then extract a finite subcover $\{\omega_{\bm{y}_i}\}_{i= 1}^m$ for which we define an associated partition of unity $\{\eta_{\bm{y}_i}\}_{i=1}^m$.

By Lemma \ref{lemma:weaksol} we have a unique weak solution $\bm{u}\in H^1(\Omega)$. For every $ 1\leq i\leq m$ for which $\omega_{y_i}$ intersects the edges we insert this solutions in the expressions for $h_k^{(x)}$ and $g_k^{(x)}$ in equations \eqref{eq:gkx} and \eqref{eq:hkx}. This gives an element in $Y_{\sigma+1}$ by Proposition \ref{Prop:Yspace}. Hence the equations are solvable with a solution in $X_{\sigma+1}$ by Lemma \ref{lemma:invertibility} and Proposition \ref{prop:contraction}. 

This solution is a weak solution to problems \eqref{trans1:prob}--\eqref{trans3:prob} even though it does not have sufficient regularity to directly satisfy the equations. However it does satisfy the equations in a distributional sense which means that the change of variables can be treated as usual and we get a weak solution to \eqref{loc1:prob}--\eqref{loc3:prob}, $\tilde{\bm{u}}^{(i)}\in H^{\sigma+1}(T;\mathbb{R}^3)$. This regularity combined with $\Delta_{\bm{y}} \tilde{\bm{u}}_i\in L^2(T;\mathbb{R}^3)$ is sufficient to apply Green's identity so it is trivial to check that it is indeed the weak solution given in Definition \ref{def:weaksol}. The sum of these solutions and similar ones for the $i$ where $\omega_i$ does not intersect the edge (which can be obtained by standard theory for partial differential equations) equals the weak solution by Lemma \ref{lemma:equivweaksol}, i.e we have a unique solution in $H^{\sigma+1}(\Omega)$. 

The argument given above can then be repeated for $\sigma+2$ and $\sigma+3$ to prove the theorem.
\end{proof}
\subsection{Estimates}
Here we give a proof for the inequalities $\eqref{dcpest1}$ and $\eqref{dcpest2}$. We begin with $\eqref{dcpest1}$. It is clear that 
\[
\Vert \bm{w}\Vert_{H^{\sigma+2}(\Omega;\mathbb{R}^3)}\lesssim \Vert \bm{u}\Vert_{H^{\sigma+3}(\Omega;\mathbb{R}^3)}.
\]
Furthermore, keeping the notation from Theorem \ref{thm:divcurl} and its proof, we have
\[
\Vert \bm{u}\Vert_{H^{\sigma+3}(\Omega;\mathbb{R}^3)}\lesssim \sum_{i=1}^m\Vert \tilde{\bm{u}}^{(i)}\Vert_{H^{\sigma+3}(\omega_i;\mathbb{R}^3)}.
\]
For all the $\omega_i$ which do not intersect the edges of $\Omega$ standard elliptic regularity gives us $\Vert \tilde{\bm{u}}^{(i)}\Vert_{H^{\sigma+3}(\omega_i;\mathbb{R}^3)}\lesssim \Vert \bm{f}\Vert_{H^{\sigma+1}(\Omega;\mathbb{R}^3)}+\Vert \bm{u}\Vert_{H^{\sigma+2}(\omega_i;\mathbb{R}^3)}$.
We want a similar estimate for the $\omega_i$ which do intersect the edges. For this we drop the subscript $i$ and use the notation introduced above. It is clear that 
\[
\Vert \tilde{\bm{u}}\Vert_{H^{\sigma+3}(\omega;\mathbb{R}^3)}\lesssim \Vert \bm{v}\Vert_{H^{\sigma+3}(D;\mathbb{R}^3)}=\Vert \bm{v}\Vert_{X_{\sigma+3}}.
\]
Since $L+S:X_{3+\sigma}\to Y_{3+\sigma}$ is bounded and bijective it has a bounded inverse hence
\[
\Vert \bm{v}\Vert_{X_{\sigma+3}}\lesssim \Vert(g_1^{(x)},g_2^{(x)},g_3^{(x)},h_1^{(x)},h_2^{(x)})\Vert_{Y_{3+\sigma}}.
\]
By definition we can estimate
\[
\Vert(g_1^{(x)},g_2^{(x)},g_3^{(x)},h_1^{(x)},h_2^{(x)})\Vert_{Y_{3+\sigma}}\lesssim \Vert\bm{u}\Vert_{H^{\sigma+2}(\Omega;\mathbb{R}^3)}+\Vert \bm{f}\Vert_{H^{\sigma+1}(\Omega;\mathbb{R}^3)}.
\]
Hence
\[
\Vert \bm{u}\Vert_{H^{\sigma+3}(\Omega;\mathbb{R}^3)}\lesssim\Vert\bm{u}\Vert_{H^{\sigma+2}(\Omega;\mathbb{R}^3)}+\Vert \bm{f}\Vert_{H^{\sigma+1}(\Omega;\mathbb{R}^3)}.
\]
Combining this with Theorem 1.4.3.3 in \cite{Grisvard2011} and \eqref{eq:weaksolest} we get
\[
\Vert \bm{u}\Vert_{H^{\sigma+3}(\Omega;\mathbb{R}^3)}\lesssim\Vert \bm{f}\Vert_{H^{\sigma+1}(\Omega;\mathbb{R}^3)},
\]
proving \eqref{dcpest1}.

The proof of \eqref{dcpest2} is very similar if we define $X_2$ and $Y_2$ through interpolation. However the interpolation space between $H^{\sigma-1/2}(\Gamma_i;\mathbb{R})$ and $H^{\sigma+1/2}(\Gamma_i;\mathbb{R})$ is slightly more complicated than $H^{1/2}(\Gamma_i;\mathbb{R})$, but we can always estimate its norm by the norm of $H^{\sigma+1/2}(\Gamma_i;\mathbb{R})$. This gives the estimate
\begin{align*}
\Vert(g_1^{(x)},g_2^{(x)},g_3^{(x)},h_1^{(x)},h_2^{(x)})\Vert_{Y_{2}}&\lesssim \Vert(g_1^{(x)},g_2^{(x)},g_3^{(x)})\Vert_{L^{2}(\Omega;\mathbb{R}^3)}+\Vert h_1^{(x)}\Vert_{H^{\sigma+1/2}(\Gamma_1;\mathbb{R})}
\\
&\qquad+\Vert h_2^{(x)}\Vert_{H^{\sigma+1/2}(\Gamma_2;\mathbb{R})}\\
&\lesssim\Vert\bm{u}\Vert_{H^{\sigma+1}(\Omega;\mathbb{R}^3)}+\Vert \bm{f}\Vert_{L^2(\Omega;\mathbb{R}^3)}.
\end{align*}
Thus
\[
\Vert \bm{u}\Vert_{H^{2}(\Omega;\mathbb{R}^3)}\lesssim\Vert\bm{u}\Vert_{H^{\sigma+1}(\Omega;\mathbb{R}^3)}+\Vert \bm{f}\Vert_{L^2(\Omega;\mathbb{R}^3)},
\]
which implies
\[
\Vert \bm{u}\Vert_{H^{2}(\Omega;\mathbb{R}^3)}\lesssim\Vert \bm{f}\Vert_{L^2(\Omega;\mathbb{R}^3)}
\]
in the same way as before and the proof is completed.
\section{Transport Problem}\label{Sec:TP}
This entire section is the proof of Theorem \ref{Theorem:TP}. 

We begin with the claim that the solution satisfies $\divv \bm{f}=0$.
The argument is the same as the one presented in the proof of Lemma $2.2$ in \cite{Alber1992}. Assume we have a solution as stated in the theorem. By the same calculations as in \cite{Alber1992} we get that $\divv \bm{f}$ satisfies
\begin{align}
(\bm{v}\cdot\nabla) \divv\bm{f}&=0 \qquad\inn \Omega,\label{divfeq}\\
\divv\bm{f}&=0 \qquad\onn \partial\Omega_-.
\end{align}
By Lemma \ref{Lemma:IntegralCurves} we know that the integral curves cover $\Omega$ and that they all intersect $\partial\Omega_-$. Hence we can change variables to a parametrisation of the inflow set and one variable going along the integral curves, which we denote by $t$. In these variables the equations become
\begin{align}
\partial_t \divv \bm{f}&=0\qquad\text{ for } t>0,\label{divfeq2}\\
\divv\bm{f}&=0 \qquad\text{ for } t=0.
\end{align}
We also note that the regularity of $\bm{f}$ is not high enough for derivatives of the $\divv\bm{f}$ to be well defined as functions in general. However, it is possible to show that \eqref{divfeq2} holds in a distributional sense. That the solution is unique follows from Theorem $3.19$ in \cite{BahouriCheminDanchin11}. Hence $\divv \bm{f}=0$
\subsection{An Auxiliary Result}
We begin by solving the auxiliary problem 
\begin{equation}\label{transport}
\begin{aligned}
\partial_t \bm{f}+(\bm{v}\cdot \nabla) \bm{f} + A\bm{f}&=\bm{g}\qquad && \inn [0,T]\times \mathbb{R}^2,\\
\bm{f}&=\bm{f}_0\qquad && \onn \mathbb{R}^2,
\end{aligned}
\end{equation}
that is, finding the function $\bm{f}:[0,T]\times\mathbb{R}^2\to \mathbb{R}^3$ if the functions $\bm{v}:[0,T]\times\mathbb{R}^2\to \mathbb{R}^3$, $A:[0,T]\times\mathbb{R}^2\to \mathbb{R}^{3\times3}$, $\bm{g}:[0,T]\times\mathbb{R}^2\to\mathbb{R}^3$, and $\bm{f}_0:\mathbb{R}^2\to\mathbb{R}^3$ are given. The argument for $f$ is separated into the `time' variable, $t$, and the `space' variables $\bm{x}=(x_1,x_2)$, i.e. $\bm{f}:(t,\bm{x})=(t,x_1,x_2)\mapsto \bm{f}=(f_1,f_2,f_3)$.
\begin{proposition}\label{prop:transport1} If $\bm{v}\in H^{\sigma+2}([0,T]\times\mathbb{R}^2;\mathbb{R}^3)$, $A\in H^{\sigma+1}([0,T]\times\mathbb{R}^2;\mathbb{R}^{3\times 3})$, $\bm{g}\in H^{\sigma+1}([0,T]\times\mathbb{R}^2;\mathbb{R}^3)$, and $\bm{f}_0\in H^{\sigma+1}(\mathbb{R}^2;\mathbb{R}^3)$, then there exists a unique solution \\$\bm{f}\in C([0,T];H^{\sigma+1}(\mathbb{R}^2;\mathbb{R}^3))\cap H^{\sigma+1}([0,T]\times\mathbb{R}^2;\mathbb{R}^3)$ to \eqref{transport}.
\end{proposition}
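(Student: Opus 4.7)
The plan is to treat \eqref{transport} as a linear symmetric first-order system and proceed by regularization combined with energy estimates in $H^{\sigma+1}$. Throughout, the key structural fact is that $1/2<\sigma<1$ implies $\sigma+1 > 3/2 = d/2 + 1/2$ for $d=2$, so $H^{\sigma+1}(\mathbb{R}^2)$ is a Banach algebra continuously embedded into $W^{1,\infty}(\mathbb{R}^2)$; this powers every nonlinear estimate below.

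First, I would mollify the data to smooth, compactly supported $(\bm{v}_\epsilon, A_\epsilon, \bm{g}_\epsilon, \bm{f}_{0,\epsilon})$ converging in the respective Sobolev norms. For smooth data \eqref{transport} reduces along characteristics to a linear ODE with smooth coefficients and admits a unique smooth solution $\bm{f}_\epsilon$. The main work is a uniform $H^{\sigma+1}$-in-space estimate: apply $\Lambda_{\bm{x}}^{\sigma+1}:=(1-\Delta_{\bm{x}})^{(\sigma+1)/2}$ to \eqref{transport}, pair with $\Lambda_{\bm{x}}^{\sigma+1}\bm{f}_\epsilon$ in $L^2(\mathbb{R}^2)$ and integrate by parts. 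The transport term produces $\tfrac{1}{2}\int(\divv \bm{v}_\epsilon)\vert\Lambda_{\bm{x}}^{\sigma+1}\bm{f}_\epsilon\vert^2$ (harmless) plus the commutator $[\Lambda_{\bm{x}}^{\sigma+1},\bm{v}_\epsilon\cdot\nabla]\bm{f}_\epsilon$, which is controlled by the classical Kato--Ponce estimate
\[
\Vert [\Lambda_{\bm{x}}^{\sigma+1},\bm{v}_\epsilon\cdot\nabla]\bm{f}_\epsilon\Vert_{L^2}\lesssim \Vert \nabla \bm{v}_\epsilon\Vert_{L^\infty}\Vert \bm{f}_\epsilon\Vert_{H^{\sigma+1}}+\Vert \bm{v}_\epsilon\Vert_{H^{\sigma+1}}\Vert \nabla \bm{f}_\epsilon\Vert_{L^\infty}.
\]
A Moser-type estimate for $A_\epsilon\bm{f}_\epsilon$ via the algebra property handles the zeroth-order term, and Gr\"onwall's lemma yields, uniformly in $\epsilon$,
\[
\Vert \bm{f}_\epsilon(t)\Vert_{H^{\sigma+1}(\mathbb{R}^2)}\lesssim e^{Ct}\Bigl(\Vert \bm{f}_0\Vert_{H^{\sigma+1}}+\int_0^t\Vert \bm{g}(s)\Vert_{H^{\sigma+1}}ds\Bigr).
\]

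Next I would apply the same method at the $L^2$ level to the difference $\bm{f}_{\epsilon_1}-\bm{f}_{\epsilon_2}$, showing $\{\bm{f}_\epsilon\}$ is Cauchy in $C([0,T];L^2(\mathbb{R}^2))$. Interpolating between this convergence and the uniform $H^{\sigma+1}$ bound gives strong convergence in $C([0,T];H^s(\mathbb{R}^2))$ for every $s<\sigma+1$, and a Bona--Smith-type argument (regularizing the initial datum and using weak-$\ast$ compactness plus lower semicontinuity of the $H^{\sigma+1}$-norm) upgrades this to $\bm{f}\in C([0,T];H^{\sigma+1}(\mathbb{R}^2;\mathbb{R}^3))$, with the limit identified as a distributional solution. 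Uniqueness is an immediate consequence of the $L^2$ energy estimate applied to the difference of two solutions with identical data.

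Finally, for the joint space-time regularity $\bm{f}\in H^{\sigma+1}([0,T]\times\mathbb{R}^2;\mathbb{R}^3)$, I would use the equation itself: $\partial_t\bm{f}=\bm{g}-\bm{v}\cdot\nabla\bm{f}-A\bm{f}$ lies in $L^2([0,T];H^{\sigma}_{\bm{x}})$ by the algebra property, and an iteration/bootstrap combined with interpolation between $L^2([0,T];H^{\sigma+1}_{\bm{x}})$ and $H^{\sigma+1}([0,T];H^0_{\bm{x}})$ distributes the $\sigma+1$ derivatives between $t$ and $\bm{x}$. The main obstacle I anticipate is the Bona--Smith step needed to pass from weak to strong continuity in the top space $H^{\sigma+1}$; the commutator estimate itself is classical, but it has to be assembled carefully so that the right-hand side of the energy identity involves only quantities controlled by the hypothesized norms on $\bm{v}$, $A$, $\bm{g}$, and $\bm{f}_0$.
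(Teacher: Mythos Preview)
Your argument contains a genuine error at its starting point: the claimed embedding $H^{\sigma+1}(\mathbb{R}^2)\hookrightarrow W^{1,\infty}(\mathbb{R}^2)$ is false in the regime $1/2<\sigma<1$. Sobolev embedding gives $H^s(\mathbb{R}^2)\hookrightarrow W^{1,\infty}(\mathbb{R}^2)$ only when $s>d/2+1=2$, whereas here $\sigma+1<2$. (The threshold $d/2+1/2=3/2$ you quote is not the correct one for a $W^{1,\infty}$ embedding; $\sigma+1>3/2$ only yields $H^{\sigma+1}\hookrightarrow C^{\sigma}$ with $\sigma<1$.) Consequently the second term in your Kato--Ponce commutator bound,
\[
\Vert \bm{v}_\epsilon\Vert_{H^{\sigma+1}}\,\Vert \nabla \bm{f}_\epsilon\Vert_{L^\infty},
\]
cannot be absorbed by $\Vert \bm{f}_\epsilon\Vert_{H^{\sigma+1}}$, and the energy estimate does not close as written. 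This is precisely the borderline situation in which the symmetric Kato--Ponce estimate fails to be useful: you are propagating $\bm{f}$ at a regularity level that is \emph{not} Lipschitz, and the standard commutator bound demands Lipschitz control of at least one of the two factors.

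The paper sidesteps this issue entirely by invoking Theorem~3.19 of Bahouri--Chemin--Danchin, whose proof rests on paradifferential commutator estimates that exploit the extra derivative available on $\bm{v}$ (namely $\nabla\bm{v}\in B^{1}_{2,\infty}\cap L^\infty$, which follows from $\bm{v}\in H^{\sigma+2}$) rather than demanding $\nabla\bm{f}\in L^\infty$. If you want to keep a self-contained energy argument, you would need to replace the symmetric Kato--Ponce bound by an asymmetric one of this type, for instance a Bony paraproduct decomposition of $[\Lambda^{\sigma+1},\bm{v}\cdot\nabla]\bm{f}$ that places all the ``expensive'' derivatives on $\bm{v}$. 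Your bootstrap for the joint space--time regularity $H^{\sigma+1}([0,T]\times\mathbb{R}^2)$ is, by contrast, essentially the same as the paper's and would go through once the $C([0,T];H^{\sigma+1})$ step is repaired.
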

\begin{proof} To find a unique solution $\bm{f}\in C([0,T];H^s(\mathbb{R}^2;\mathbb{R}^3))$ we apply Theorem 3.19 in \cite{BahouriCheminDanchin11} (see also Definition $3.13$ in \cite{BahouriCheminDanchin11} for the definition  of a solution which is not necessarily differentiable with respect to $t$). To apply this theorem have to show that 
\begin{equation}\label{thmproof1}
(f_0)_i\in B^{\sigma+1}_{2,2}(\mathbb{R}^2;\mathbb{R})
\end{equation}
\begin{equation}\label{thmproof2}
g_i\in L^1([0,T];B^{\sigma+1}_{2,2}(\mathbb{R}^2;\mathbb{R})),
\end{equation}
\begin{equation}\label{thmproof3}
v_i\in L^\rho([0,T];B^{-M}_{\infty ,\infty}(\mathbb{R}^2;\mathbb{R})),
\end{equation}
for $i=1,2,3$, some $\rho>1$ and $M>0$, and 
\begin{equation}\label{thmproof4}
\nabla \bm{v}\in L^1([0,T];B^1_{2,\infty}(\mathbb{R}^2;\mathbb{R})\cap L^\infty(\mathbb{R}^2;\mathbb{R})).
\end{equation}
Equation \eqref{thmproof1} follows from $H^{\sigma+1}(\mathbb{R}^2;\mathbb{R})=B^{\sigma+1}_{2,2}(\mathbb{R}^2;\mathbb{R})$. Equation \eqref{thmproof2} follows from the same identity and Proposition \ref{prop:Hspaces} \textit{(i)} together with the fact that $L^2([0,T];X)\subset L^t([0,T];X)$ if $1\leq t\leq 2$ by the Hölder inequality. By the previously stated results we easily get $v_i\in L^\rho([0,T]; B^{\sigma+2}_{2,2}(\mathbb{R}^2;\mathbb{R}))$. On the other hand $B^{\sigma+2}_{2,2}(\mathbb{R}^2;\mathbb{R})\subset B^{-M}_{\infty,\infty}(\mathbb{R}^2;\mathbb{R})$ for any $M>0$ \cite[Proposition 2.71]{BahouriCheminDanchin11}. This gives equation \eqref{thmproof3}. Similarly we get $\nabla \bm{v}\in  L^1([0,T];B^{\sigma+1}_{2,2}(\mathbb{R}^2;\mathbb{R}))$ and $B^{\sigma+1}_{2,2}(\mathbb{R}^2;\mathbb{R})\subset B^1_{2,\infty}(\mathbb{R}^2;\mathbb{R})$ (\cite[Proposition 2.71]{BahouriCheminDanchin11}. Furthermore, by  the Sobolev embeddings, we also get that $B^{\sigma+1}_{2,2}(\mathbb{R}^2;\mathbb{R})\subset L^\infty(\mathbb{R}^2;\mathbb{R})$.

Finally, since $A\neq 0$, we also have to show that
\[
\Vert (A\bm{f})(t)\Vert_{H^{\sigma+1}(\mathbb{R}^2;\mathbb{R}^3)}\leq \mathcal{A}(t)\Vert \bm{f}(t)\Vert_{H^{\sigma+1}(\mathbb{R}^2;\mathbb{R}^3)}.
\]
for some $\mathcal{A}\in L^1([0,T];\mathbb{R})$ (see Remark  3.17 in \cite{BahouriCheminDanchin11}). However since $\sigma+1>1$ we get
\[
\Vert (A\bm{f})(t)\Vert_{H^{\sigma+1}(\mathbb{R}^2;\mathbb{R}^3)}\lesssim \Vert A(t)\Vert_{H^{\sigma+1}(\mathbb{R}^2;\mathbb{R}^{3\times 3})}\Vert \bm{f}(t)\Vert_{H^{\sigma+1}(\mathbb{R}^2;\mathbb{R}^3)}.
\]
we see that $\mathcal{A}(t):=\Vert A(t)\Vert_{H^{\sigma+1}(\mathbb{R}^2;\mathbb{R}^{3\times 3})}$ is in $L^2([0,T];\mathbb{R})\subset L^1([0,T];\mathbb{R})$ using Proposition \ref{prop:Hspaces} \textit{(i)} because $A\in H^{\sigma+1}(\mathbb{R}^3;\mathbb{R}^{3\times 3})$. By Remark 3.20 in \cite{BahouriCheminDanchin11} we also need to approximate $A$ by a sequence of smooth functions satisfying the same inequality. But due to the way $\mathcal{A}(t)$ was chosen this can be done by just approximating $A$ by smooth functions converging in norm.

It remains to show that this solution indeed lies in $H^{\sigma+1}([0,T]\times\mathbb{R}^2;\mathbb{R}^3)$.
We consider
\begin{equation}\label{transporttimederive}
\partial_t\bm{f}=-(\bm{v}\cdot\nabla)\bm{f}-A\bm{f}+\bm{g}.
\end{equation}
The components of $-(\bm{v}\cdot\nabla)\bm{f}$ consist of sums of terms of the form $v_i\partial_{x_i}f_j$. Since $v_i\in H^{\sigma+2}([0,T]\times\mathbb{R}^2;\mathbb{R})$, $\partial_{x_i}f_j\in L^2([0,T],H^{\sigma}(\mathbb{R}^2;\mathbb{R}))$ we can apply Proposition \ref{prop:Hspaces} \textit{(v)}. This gives $v_i\partial_{x_i}f_j\in L^2([0,T],H^{\sigma}(\mathbb{R}^2;\mathbb{R}))$ and hence $-(\bm{v}\cdot\nabla)\bm{f}\in L^2([0,T],H^{\sigma}(\mathbb{R}^2;\mathbb{R}^3))$. Similarly the components of $-A\bm{f}$ consist of sums of terms of the form $a_{ij}f_i$, where $a_{ij}\in H^{\sigma+1}([0,T]\times\mathbb{R}^2;\mathbb{R})$ and $f_i\in L^2([0,T];H^{\sigma}(\mathbb{R}^2;\mathbb{R}))$. This allows us to again apply Proposition \ref{prop:Hspaces} \textit{(v)}, giving $a_{ij}f_i\in L^2([0,T];H^{\sigma}(\mathbb{R}^2;\mathbb{R}))$ and hence $-A\bm{f}\in L^2([0,T];H^{\sigma}(\mathbb{R}^2;\mathbb{R}^3))$.

Moreover, the components of $\bm{g}$ are functions in $H^{\sigma}([0,T]\times\mathbb{R}^2;\mathbb{R})$, which by Proposition \ref{prop:Hspaces} \textit{(i)} is a subspace of $L^2([0,T];H^{\sigma}(\mathbb{R}^2;\mathbb{R}))$, so $\bm{g}\in L^2([0,T];H^{\sigma}(\mathbb{R}^2;\mathbb{R}^3))$. Altogether this gives us $\partial_t\bm{f}\in L^2([0,T];H^{\sigma}(\mathbb{R}^2;\mathbb{R}^3))$. This in turn means that \\$\bm{f}\in L^2([0,T];H^{1+\sigma}(\mathbb{R}^2;\mathbb{R}^3))\cap H^1([0,T];H^{\sigma}(\mathbb{R}^2;\mathbb{R}^3))$. By Proposition \ref{prop:Hspaces} \textit{(iii)} this means $\bm{f}\in H^{\sigma}([0,T];H^1(\mathbb{R}^2;\mathbb{R}^3))$. 

With this information at hand we can redo the analysis of equation \eqref{transporttimederive}. We still have $v_i\in H^{\sigma+2}([0,T]\times\mathbb{R}^2;\mathbb{R})$ but now $\partial_{x_i}f_j\in H^{\sigma}([0,T];L^2(\mathbb{R}^2;\mathbb{R}))$. By applying Proposition \ref{prop:Hspaces} \textit{(iv)} we get $v_i\partial_{x_i}f_j\in H^{\sigma}([0,T];L^2(\mathbb{R}^2;\mathbb{R}))$ and hence $-(\bm{v}\cdot\nabla) \bm{f}\in H^{\sigma}([0,T];L^2(\mathbb{R}^2;\mathbb{R}^3))$. Similarly $a_{ij}\in H^{\sigma+1}([0,T]\times\mathbb{R}^2;\mathbb{R})$ is still true, but now $f_i\in H^{\sigma}([0,T]; H^1(\mathbb{R}^2;\mathbb{R}))$. Applying Proposition \ref{prop:Hspaces} \textit{(ii)} we get $f\in H^{\sigma}([0,T]\times\mathbb{R})$ and hence, by theorem 1.4.4.2 in \cite{Grisvard2011}, $a_{ij}f_j\in H^{\sigma}([0,T]\times\mathbb{R}^2;\mathbb{R})$. By proposition \ref{prop:Hspaces} \textit{(i)} this means $A\bm{f}\in H^{\sigma}([0,T]; L^2(\mathbb{R}^2;\mathbb{R}^3))$. 

Finally, since $\bm{g}\in H^{\sigma}([0,T]\times\mathbb{R}^2;\mathbb{R}^3)$ we get by Proposition \ref{prop:Hspaces} \textit{(i)} that \\$\bm{g}\in H^{\sigma}([0,T]; L^2(\mathbb{R}^2;\mathbb{R}^3))$. Altogether this means that $\partial_t\bm{f}\in H^{\sigma}([0,T]; L^2(\mathbb{R}^2;\mathbb{R}^3))$. Thus $\bm{f}\in H^{\sigma+1}([0,T];L^2(\mathbb{R}^2;\mathbb{R}^3))$. Recalling that we started out with \\$\bm{f}\in C([0,T];H^{\sigma+1}(\mathbb{R}^2;\mathbb{R}^3))\subset L^2([0,T];H^{\sigma+1}(\mathbb{R}^2;\mathbb{R}^3))$ we get through Proposition \ref{prop:Hspaces} \textit{(ii)} that $\bm{f}\in H^{\sigma+1}([0,T]\times\mathbb{R}^2;\mathbb{R}^3)$.
\end{proof}
\subsection{Finding the Solution}
We begin by solving the problem under the additional assumption that the inflow set $\partial\Omega_-$ is flat. This is because there is only a slight difference between a inflow set that is flat and one that is not. We clarify the difference at the relevant stage of the proof in Remark \ref{Rem:nonflatinflow}.

We seek a solution $\bm{f}\in H^{\sigma+1}(\Omega;\mathbb{R}^3)$ to $(\mathcal{TP})$ where $\bm{v}\in H^{\sigma+2}(\Omega;\mathbb{R}^3)$ and $\bm{f}_0\in H^{\sigma}(\partial\Omega_-;\mathbb{R}^3)$ are given.
Furthermore, by Lemma \ref{Lemma:IntegralCurves}, we also know that $\vert\bm{v}(\bm{x})\vert^2>d^2$, where $d$ is some positive constant. By the Sobolev embedding theorems we have that $\bm{v}\in C^1(\overline{\Omega};\mathbb{R}^3)$. This means that it is easily shown that 
\begin{equation}\label{eq:v1pos1}
\bm{v}(\bm{x})\cdot\bm{v}(\bm{y})>\frac{d^2}{2}
\end{equation}
if $\vert \bm{x}-\bm{y}\vert$ is sufficiently small. Similarly, since $\bm{v}(\bm{y})\cdot\bm{n}(\bm{y})<-c$ for any $\bm{y}\in \partial\Omega_-$ it is easily shown that 
\begin{equation}\label{eq:v1pos2}
\bm{v}(\bm{x})\cdot\bm{n}(\bm{y})<-c/2
\end{equation}
given that $\vert \bm{x}-\bm{y}\vert$ is sufficiently small. We denote by $\epsilon$ a number small enough for both \eqref{eq:v1pos1} and \eqref{eq:v1pos2} to be satisfied if $\vert \bm{x}-\bm{y}\vert<\epsilon$.

Recall that the integral curves of the vector field $\bm{v}$ are given by $t\mapsto \bm{\Phi}(t,\bm{x}_0)$, where $\bm{\Phi}$ is the flow defined by
\begin{align*}
\frac{d}{dt}\bm{\Phi}(t,\bm{x}_0)&=\bm{v}(\bm{\Phi}(t,\bm{x}_0)),\\
\bm{\Phi}(0,\bm{x}_0)&=\bm{x}_0.
\end{align*}
From Lemma \ref{Lemma:IntegralCurves} it follows that the integral curves cover all of $\Omega$. Moreover every integral curve intersects $\partial\Omega_-$ in one point. This means that the function
\[
\bm{\Psi}(t,\bm{x})=\bm{\Phi}(t,\bm{x})\vert_{\bm{x}\in\partial\Omega_-}
\]
maps $M=\cup_{\bm{x}\in \partial\Omega_-}[0,S(\bm{x})]\times \{\bm{x}\}$ onto $\overline{\Omega}$. For later we also note that since \\$\bm{v}\in H^{\sigma+2}(\Omega;\mathbb{R}^3)\hookrightarrow C^1(\Omega;\mathbb{R})$ we get that $\bm{\Psi}(\bm{x},t)\in C^1(M;\mathbb{R}^3)$. Furthermore $\bm{\Psi}$ and its derivatives are bounded in supremum norm by a constant depending on $\Omega$ and $\bm{v}$. It is also possible to show that the derivatives are bounded below. This means that $\bm{\Psi}^{-1}$ and its derivatives are also bounded in supremum norm.

Let $\bm{y}$ be a point on the inflow set. This gives us an integral curve, $\Sigma=\bm{\Psi}([0,S(\bm{y})]\times \{\bm{y}\})$. We can cover this streamline by open balls with radius $\epsilon/2$ centered at the points $\bm{y}_i=\Psi(s_i,\bm{y})$, $i=0,1,...,n$, starting with $\bm{y}_0=\bm{y}$, chosen in such a way that $\bm{x}_i\in B_{i-1}$, where $B_{i-1}$ denotes the ball centered at $\bm{x}_{i-1}$. We also assume that $\epsilon$ is small enough for $B_i\cap \Sigma$ to consist of only one connected component.

If we then choose a sufficiently small neighbourhood $B_{\bm{y}}\subset \overline{\partial\Omega}_-$ of $\bm{y}$ then the set $T=\bm{\Psi}(\cup_{\bm{x}\in B_{\bm{y}}} [0,S(\bm{x})]\times \{\bm{x}\})$ satisfies $\Sigma\subset T\subset \cup_{i=0}^n B_i$. Moreover, we also choose $B_{\bm{y}}$ to be small enough for $\overline{\Pi_i\cap T}\subset B_{i-1}$, where $\Pi_i$ denotes the plane containing $\bm{y}_i$ with normal in the direction of $\bm{v}(y_i)$. The plane $\Pi_i$ divides $B_i$ in the half-spheres $B_i^-$ and $B_i^+$, where $B_i^+$ is the half-sphere which $\bm{v}(y_i)$ points into. We also choose $B_{\bm{y}}$ small enough for $B_i\cap T$ to only consist of one connected component, which we can do since $B_i\cap \Sigma$ only consists of one connected component.

In each of the balls $B_i$ we can express the equation in $(\mathcal{TP})$ using an orthonormal basis with the first
basis vector in the direction of $\bm{v}(\bm{y}_i)$, or $-\bm{n}$ if $i=0$. Due to \eqref{eq:v1pos1} and \eqref{eq:v1pos2} this means that $v_1(\bm{x})$ is positive for $\bm{x}\in B_i$. Hence we can divide the equation by $v_1(\bm{x})$. Moreover $1/v_1(\bm{x})\in H^{\sigma+2}(\Omega;\mathbb{R})$. This follows from  Theorem 10 in \cite{BourdaudSickel2011} and the fact that $1/v_1$ can be expressed as the composition of a smooth function and a function in $H^{\sigma+2}(\Omega;\mathbb{R})$.

We define $\bm{v}'=\left(\frac{v_2}{v_1},\frac{v_3}{v_1}\right)$, $\nabla'=(\partial_{x_1},\partial_{x_2})$,
and $A=-\frac{1}{v_1}D\bm{v}$. Here $D\bm{v}$ denotes the Jacobian matrix of $\bm{v}$. From the properties of $\bm{v}$ and $1/v_1$ it follows that $\bm{v}'\in H^{\sigma+2}(\Omega;\mathbb{R}^2)$ and $A\in H^{\sigma+1}(\Omega;\mathbb{R}^{3\times 3})$. Dividing the equation in $(\mathcal{TP})$ by $v_1$ gives
\[
\partial_{x_1}\bm{f}+(\bm{v}'\cdot\nabla')\bm{f}+A\bm{f}=0.
\]
Hence starting with $B_0$ we want to solve
\begin{equation}\label{eq:transpB1}
\begin{aligned}
\partial_{x_1}\bm{f}+(\bm{v}'\cdot\nabla')\bm{f}+A\bm{f}&=0 &&\inn B_0\cap T,\\
\bm{f}&=\bm{f}_0 && \onn B_{\bm{y}}.
\end{aligned}
\end{equation}
If we put the origin at $\bm{y}$ then $B_{\bm{y}}$ is contained in the plane given by $x_1=0$ and $B_1\cap T$ is contained in the subset of $\mathbb{R}^3$ defined by $x_1\in[0,\epsilon/2]$. Then we can extend $\bm{v'}$, $A$ and $\bm{f}_0$ to $H^{\sigma+2}([0,\epsilon/2]\times\mathbb{R}^2;\mathbb{R}^3)$, $H^{\sigma+1}([0,\epsilon/2]\times\mathbb{R}^2;\mathbb{R}^{3\times3})$ and $H^{\sigma+1}(\mathbb{R}^2;\mathbb{R}^3)$ respectively. Applying Proposition \ref{prop:transport1} gives us a solution $\bm{f}\in H^{\sigma+1}(B_0\cap T;\mathbb{R}^3)$. We note that this solution is independent of the way $\bm{v}'$, $A$ and $\bm{f}_0$ are extended.
\begin{remark}\label{Rem:nonflatinflow} In the case when the inflow set is not flat we need to change variables in $B_0$ in such a way that $B_{\bm{y}}$ is mapped onto some subset of $\mathbb{R}^2$ to apply Proposition \ref{prop:transport1}. The boundary $\partial\Omega_-$ is sufficiently smooth for this to be done without changing the regularity of the functions involved. However, it does change the expressions slightly. Most importantly the function which we have to divide by, but by choosing $B_{\bm{y}}$ sufficiently small we can always make sure that this function remains bounded below by some constant.
\end{remark}

To proceed we assume we already have a solution $\tilde{\bm{f}}_{i-1}$ in $\cup_{k=0}^{i-1}B_{k}\cap T$ with $\tilde{\bm{f}}_{i-1}=\bm{f}_0$ on $B_{\bm{y}}$. Define $\eta_i$ and $\phi_i$ as smooth functions on $\cup_{k=0}^i B_k\cap T$ satisfying $\eta_i+\phi_i= 1$ on the whole domain, $\eta_i= 1$ on $\cup_{k=0}^{i-1}B_{k}\cap T\setminus B_i^+$ and $\phi_i=1$ on $B_i^+\cap T\setminus \cup_{k=0}^{i-1}B_{k}$. The sets $\cup_{k=0}^{i-1}B_{k}\cap T\setminus B_i^+$ and $B_i^+\cap T\setminus \cup_{k=0}^{i-1}B_{k}$ are always separated due to $\overline{\Pi_i\cap T}\subset B_{i-1}$ (See Figure \ref{Fig2}). This means we can always define the functions $\eta_i$ and $\phi_i$ this way. We consider the problem
\begin{figure}
\begin{center}
\includegraphics[scale=1.2]{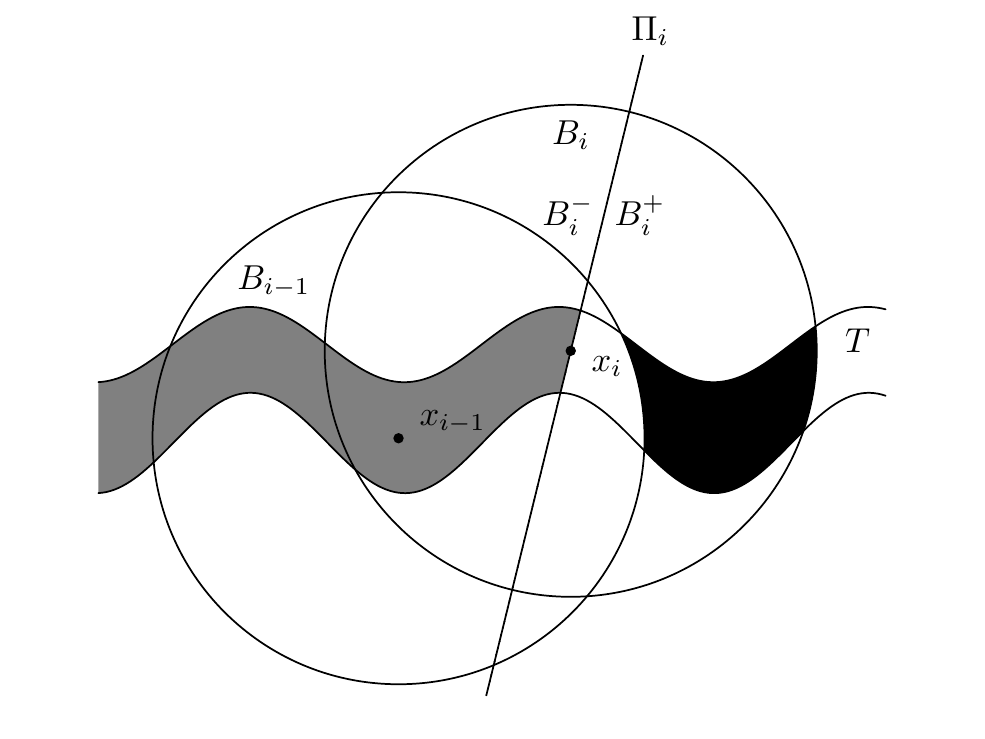}
\caption{A part of $T$ and the balls $B_{i-1}$ and $B_i$. The grey area is $\cup_{k=0}^{i-1}B_{k}\cap T\setminus B_i^+$ and the black area is $B_i^+\cap T\setminus \cup_{k=0}^{i-1}B_{k}$.}\label{Fig2}
\end{center}
\end{figure}
\begin{equation}\label{eq:transpBi}
\begin{aligned}
(\bm{v}\cdot\nabla)\bm{f}&=(\bm{f}\cdot\nabla)\bm{v}+\tilde{\bm{f}}_{i-1}(\bm{v}\cdot\nabla)\phi_i && \inn B_i^+\cap T,\\
\bm{f}&=0 &&\onn \Pi_i\cap T.
\end{aligned}
\end{equation}
The term $\tilde{\bm{f}}_{i-1}(\bm{v}\cdot\nabla)\phi_i$ is extended by $0$ in $B_i^+\cap T\setminus\cup_{k=0}^{i-1}B_{k}$. This can be done without problem since $\phi_i$ is constant there.
Expressed with respect to the coordinates related to $\Pi_i$ and extending the given functions \eqref{eq:transpBi} becomes
\begin{equation}\label{eq:transpBi2}
\begin{aligned}
\partial_{x_1}\bm{f}+(\bm{v}'\cdot\nabla')\bm{f}+A\bm{f}&=\frac{1}{v_1}\tilde{\bm{f}}_{i-1}(\bm{v}\cdot\nabla)\phi_i &&\text{for}\,0<x_1<\epsilon/2,\\
\bm{f}&=0 && \text{for}\, x_1=0.
\end{aligned}
\end{equation}
Since $\frac{1}{v_1}\tilde{\bm{f}}_{i-1}(\bm{v}\cdot\nabla)\phi_i\in H^{\sigma+1}([0,\epsilon/2]\times \mathbb{R}^2;\mathbb{R}^3)$ we get a solution $\bm{f}\in H^{\sigma+1}([0,\epsilon/2]\times \mathbb{R}^2;\mathbb{R}^3)$ by Proposition \ref{prop:transport1}. It is also clear that $\phi_i\tilde{\bm{f}}_{i-1}$ solves the problem \eqref{eq:transpBi} if restricted to $\cup_{k=0}^{i-1} B_k\cap B_i^+\cap T$. Uniqueness of the solutions to this problem is easily shown by considering the difference of two solutions. Hence $\bm{f}=\phi_i\tilde{\bm{f}}_{i-1}$ in $\cup_{k=0}^{i-1} B_k\cap B_i^+\cap T$. This means that if we extend $\eta_i\tilde{\bm{f}}_{i-1}$ and $\bm{f}$ by zero to $\cup_{k=0}^i B_k\cap T$ then $\tilde{\bm{f}}_i:=\eta_i\tilde{\bm{f}}_{i-1}+\bm{f}$ equals $\tilde{\bm{f}}_{i-1}$ if restricted to $\cup_{k=0}^{i-1} B_k\cap T$ and
\[
(\bm{v}\cdot\nabla)\tilde{\bm{f}}_i=(\tilde{\bm{f}}_i\cdot\nabla)\bm{v}
\]
in $\cup_{k=0}^i B_k\cap T$. So we have extended our solution to $\cup_{k=0}^i B_i\cap T$. Starting with the solution to \eqref{eq:transpB1} and repeating the argument a finite number of times gives a solution in all of $T$.

Now we can cover $\partial\Omega_-$ with a finite number, $m$, of open balls to get corresponding solutions in $T_j$, $j=1,...,m$. Then $\cup_{j=1}^m T_j$ covers $\Omega$. Moreover if $T_j\cap T_k\neq \emptyset$ we get that the solutions in $T_j$ and $T_k$ are equal since their difference is $0$ along any integral curve of $\bm{v}$. So if we define a partition of unity $\psi_j$, $j=1,...,m$ such that $\supp (\psi_j)\subset T_j$ and let $\bm{f}_j$ now denote the solution in $T_j$, then $\bm{f}=\sum_{j}^m\bm{f}_j\psi_j$ is a solution to $(\mathcal{TP})$.

\subsection{Estimates}
Below we prove the estimates \eqref{tpest1}, \eqref{tpest2} and \eqref{tpest3}.
\subsubsection{Higher Regularity Estimate}
For a solution of the transport equation given by \eqref{transport}
we have, from Theorem 3.14 in \cite{BahouriCheminDanchin11}, the estimate
\[
\begin{aligned}
\Vert \bm{f}\Vert_{L^\infty([0,T],H^{\sigma+1}(\mathbb{R}^2;\mathbb{R}^3))} &\lesssim \left(\Vert \bm{f}_0\Vert_{H^{\sigma+1}(\mathbb{R}^2;\mathbb{R}^3)}+\Vert \bm{g}\Vert_{L^1([0,T],H^{\sigma+1}(\mathbb{R}^2;\mathbb{R}^3))}\right)\\
&\qquad\times\exp{\left(C\int_0^T(\Vert \nabla \bm{v}(t)\Vert_{H^{\sigma}(\mathbb{R}^2;\mathbb{R}^3)}+\mathcal{A}(t))dt \right)}.
\end{aligned}
\]
This yields
\[
\Vert \bm{f}\Vert_{L^2([0,T],H^{\sigma+1}(\mathbb{R}^2;\mathbb{R}^3))}\lesssim\Vert \bm{f}_0\Vert_{H^{\sigma+1}(\mathbb{R}^2;\mathbb{R}^3)}+\Vert \bm{g}\Vert_{H^{\sigma+1}([0,T]\times\mathbb{R}^2;\mathbb{R}^3)}.
\]
Furthermore
\[
\Vert \bm{f}\Vert_{H^{\sigma}([0,T];H^1(\mathbb{R}^2))}\lesssim \Vert \bm{f} \Vert_{H^1([0,T];H^{\sigma}(\mathbb{R}^2))}+\Vert \bm{f}\Vert_{L^2([0,T];H^{\sigma+1}(\mathbb{R}^2))},
\]
where clearly
\[
\Vert \bm{f} \Vert_{H^1([0,T];H^{\sigma}(\mathbb{R}^2;\mathbb{R}^3))}\lesssim \Vert \bm{f} \Vert_{L^2([0,T];H^{\sigma}(\mathbb{R}^2;\mathbb{R}^3))}+\Vert \partial_t\bm{f} \Vert_{L^2([0,T];H^{\sigma}(\mathbb{R}^2;\mathbb{R}^3)).}
\]
If $\bm{f}$ satisfies the transport equation then
\[
\Vert \partial_t\bm{f} \Vert_{L^2([0,T];H^{\sigma}(\mathbb{R}^2))}\lesssim\Vert \bm{f} \Vert_{L^2([0,T];H^{\sigma+1}(\mathbb{R}^2))}+\Vert \bm{g} \Vert_{L^2([0,T];H^{\sigma}(\mathbb{R}^2))},
\]
which in combination with the estimates above yields
\[
\Vert \bm{f}\Vert_{H^{\sigma}([0,T];H^1(\mathbb{R}^2;\mathbb{R}^3))}\lesssim\Vert \bm{f}_0\Vert_{H^{\sigma+1}(\mathbb{R}^2;\mathbb{R}^3)}+\Vert \bm{g}\Vert_{H^{\sigma+1}([0,T]\times\mathbb{R}^2;\mathbb{R}^3)}.
\]
Similarly we have
\[
\Vert \bm{f}\Vert_{H^{\sigma+1}([0,T];L^2(\mathbb{R}^2;\mathbb{R}^3))}\lesssim \Vert \bm{f}\Vert_{L^2([0,T];L^2(\mathbb{R}^2;\mathbb{R}^3))}+\Vert \partial_t \bm{f}\Vert_{H^{\sigma}([0,T];L^2(\mathbb{R}^2;\mathbb{R}^3))},
\]
where
\begin{align*}
\Vert \partial_t \bm{f}\Vert_{H^{\sigma}([0,T];L^2(\mathbb{R}^2;\mathbb{R}^3))}\lesssim \Vert \bm{f}\Vert_{H^{\sigma}([0,T];H^1(\mathbb{R}^2;\mathbb{R}^3))}+\Vert \bm{g}\Vert_{H^{\sigma}([0,T];L^2(\mathbb{R}^2;\mathbb{R}^3))}.
\end{align*}
Combining this with the other estimates we get
\[
\Vert \bm{f}\Vert_{H^{\sigma+1}([0,T];L^2(\mathbb{R}^2;\mathbb{R}^3))}\lesssim \Vert \bm{f}_0\Vert_{H^{\sigma+1}(\mathbb{R}^2;\mathbb{R}^3)}+\Vert \bm{g}\Vert_{H^{\sigma+1}([0,T]\times\mathbb{R}^2;\mathbb{R}^3))}
\]
and thus
\[
\Vert \bm{f}\Vert_{H^{\sigma+1}([0,T]\times\mathbb{R}^2;\mathbb{R}^3)}\lesssim \Vert \bm{f}_0\Vert_{H^{\sigma+1}(\mathbb{R}^2;\mathbb{R}^3)}+\Vert \bm{g}\Vert_{H^{\sigma+1}([0,T]\times\mathbb{R}^2;\mathbb{R}^3))}.
\]

We wish to apply this to the equations \eqref{eq:transpB1} and \eqref{eq:transpBi2}. First we note that the operation of extending the given functions is continuous, which means that the norm of an extended function can be estimated by the norm of the restricted function. Moreover the $H^{\sigma+2}$-norm of $1/v_1$ can be estimated by some expression depending on the $H^{\sigma+2}$-norm of $\bm{v}$ and thus so can the $H^{\sigma+2}$-norm of $\bm{v}'$ and the $H^{\sigma+1}$-norm of $A$. Let $\bm{f}_1$ be the solution of \eqref{eq:transpB1}. The estimate above gives
\[
\Vert \bm{f}_1\Vert_{H^{\sigma+1}(B_1\cap T;\mathbb{R}^3)}\lesssim \Vert \bm{f}_0\Vert_{H^{\sigma+1}(\partial\Omega_-;\mathbb{R}^3)}.
\]
Let $\tilde{\bm{f}}_i$ be the solution of \eqref{eq:transpBi2} then
\[
\Vert \tilde{\bm{f}}_i\Vert_{H^{\sigma+1}(B_i^+\cap T;\mathbb{R}^3)}\lesssim\left\Vert \frac{1}{v_1}\bm{f}_{i-1}(\bm{v}\cdot\nabla)\phi_i\right\Vert_{H^{\sigma+1}(B_{i-1}\cap B^+_i\cap T;\mathbb{R}^3)}\lesssim \Vert \bm{f}_{i-1}\Vert_{H^{\sigma+1}(B_{i-1}\cap T;\mathbb{R}^3)}.
\]
Since $\bm{f}_i=\tilde{\bm{f}}_i+\eta_i\bm{f}_{i-1}$ this means
\[
\Vert \bm{f}_i\Vert_{H^{\sigma+1}(\cup_{k=1}^i B_k\cap T;\mathbb{R}^3)}\lesssim\Vert \bm{f}_{i-1}\Vert_{H^{\sigma+1}(\cup_{k=1}^{i-1} B_k\cap T;\mathbb{R}^3)}.
\]
With repeated use and combined with the estimate for $\bm{f}_1$ this yields
\[
\Vert \bm{f}_j\Vert_{H^{\sigma+1}(T_j;\mathbb{R}^3)}\lesssim \Vert \bm{f}_0\Vert_{H^{\sigma+1}(\partial\Omega_-;\mathbb{R}^3)},
\]
for the solution $\bm{f}_j$ in $T_j$.
Since the solution, $\bm{f}$, to $(\mathcal{TP})$ is a finite sum of functions like $\bm{f}_j$, we get
\[
\Vert \bm{f}\Vert_{H^{\sigma+1}(\Omega;\mathbb{R}^3)}\lesssim \Vert \bm{f}_0\Vert_{H^{\sigma+1}(\partial\Omega_-;\mathbb{R}^3)},
\]
which is \eqref{tpest1}.
\subsubsection{Difference of solutions}
Let $\bm{v}^{(1)}$ and $\bm{v}^{(2)}$ be two different velocity fields with corresponding solutions $\bm{f}^{(1)}$ and $\bm{f}^{(2)}$. Denote by $[\bm{v}]=\bm{v}^{(2)}-\bm{v}^{(1)}$ and similarly $[\bm{f}]=\bm{f}^{(2)}-\bm{f}^{(1)}$. Then
\[
(\bm{v}^{(1)}\cdot \nabla)[\bm{f}]=([\bm{f}]\cdot \nabla)\bm{v}^{(1)}+(\bm{f}^{(2)}\cdot\nabla)[\bm{v}]-([\bm{v}]\cdot \nabla)\bm{f}^{(2)}
\]
in $\Omega$ and $[\bm{f}]=0$ on $\partial\Omega_-$.
This means that $[\bm{h}](t,\bm{x}):=[\bm{f}](\bm{\Psi}(t,\bm{x}))$ satisfies
\begin{align*}
\partial_t[\bm{h}]&=A^{(1)}\circ\bm{\Psi}[\bm{h}]+\left((\bm{f}^{(2)}\cdot\nabla)[\bm{v}]-([\bm{v}]\cdot \nabla)\bm{f}^{(2)}\right)\circ \bm{\Psi} && \text{ for } 0<t<S(\bm{x}),\\
[\bm{h}]&=0 && \text{ for } t=0,
\end{align*}
where $\bm{\Psi}$ is the flow of $\bm{v}^{(1)}$. By applying Gröwall's inequality we get
\begin{align*}
\sup_{t}\vert [\bm{h}](t,\bm{x})\vert\lesssim \int_0^{S(x)} \vert \left((\bm{f}^{(2)}\cdot \nabla)  [\bm{v}]-([\bm{v}]\cdot \nabla)\bm{f}^{(2)}\right)\circ\bm{\Psi}(t,\bm{x})\vert dt
\end{align*}
and hence
\begin{align*}
\int_{\partial\Omega_-}\int_0^{S(x)} \vert [\bm{h}](t,\bm{x})\vert^2dtd\bm{x}\lesssim\int_{\partial\Omega_-}\int_0^{S(x)} \vert \left((\bm{f}^{(2)}\cdot \nabla) [\bm{v}]-([\bm{v}]\cdot \nabla)\bm{f}^{(2)}\right)\circ\bm{\Psi}\vert^2 dtd\bm{x}
\end{align*}
or equivalently
\[
\Vert [\bm{f}]\Vert_{L^2(\Omega;\mathbb{R}^3)} \lesssim\Vert(\bm{f}^{(2)}\cdot \nabla) [\bm{v}]-([\bm{v}]\cdot \nabla) \bm{f}^{(2)}\Vert_{L^2(\Omega;\mathbb{R}^3)},
\]
because $\bm{\Psi}$, $\bm{\Psi}^{-1}$ and their derivatives are bounded. This means that $\Vert\cdot\Vert_{L^2(\Omega;\mathbb{R}^3)}$ and $\Vert\cdot\Vert_{L^2(M;\mathbb{R}^3)}$ are equivalent if we change variables with $\bm{\Psi}$ and $\bm{\Psi}^{-1}$. Moreover
\[
\Vert(\bm{f}^{(2)}\cdot \nabla) [\bm{v}]-([\bm{v}]\cdot \nabla) \bm{f}^{(2)}\Vert_{L^2(\Omega;\mathbb{R}^3)}\lesssim\Vert \bm{f}_0\Vert_{H^{\sigma+1}(\partial\Omega_-;\mathbb{R}^3)}\Vert [\bm{v}]\Vert_{H^1(\Omega;\mathbb{R}^3)}.
\]
Combining these last two estimates gives
\[
\Vert [\bm{f}]\Vert_{L^2(\Omega;\mathbb{R}^3)} \lesssim\Vert \bm{f}_0\Vert_{H^{\sigma+1}(\partial\Omega_-;\mathbb{R}^3)}\Vert [\bm{v}]\Vert_{H^1(\Omega;\mathbb{R}^3)},
\]
which is \eqref{tpest3}.
\subsubsection{Lower Regularity Estimate}
Consider again a solution, $\bm{f}$, to $(\mathcal{TP})$. We set $\bm{h}(t,\bm{x})=\bm{f}(\bm{\Psi}(t,\bm{x}))$, which satisfies
\begin{align*}
& \partial_t \bm{h}=\left(A\bm{h}\right)\circ \bm{\Psi} && \text{ for }0<t<S(\bm{x}),\\
& \bm{h}=\bm{f}_0 && \text{ for } t=0.
\end{align*}
Grönwall's inequality gives us
\begin{equation}\label{eq:tpsolest}
\sup_t\vert \bm{h}(t,\bm{x})\vert\lesssim \vert \bm{f}_0(\bm{x})\vert
\end{equation}
This yields
\begin{align*}
\int_{\partial\Omega_-}\int_0^{S(x)} \vert \bm{h}(t,\bm{x})\vert^2dtd\bm{x} \lesssim \int_{\partial\Omega_-}\vert \bm{f}_0(\bm{x})\vert^2 d\bm{x}
\end{align*}
or equivalently
\[
\Vert \bm{f}\Vert_{L^2(\Omega)}\lesssim\Vert \bm{f}_0\Vert_{L^2(\partial\Omega_-)},
\]
which is \eqref{tpest2}. The inequality \eqref{eq:tpsolest} together with part $(iii)$ of Lemma \ref{Lemma:IntegralCurves} also gives us that the solution satisfies $\bm{f}\vert_{\partial\partial\Omega_+}=0$ if $\bm{f}_0\vert_{\partial\partial\Omega_-}=0$.
\begin{remark} It would be simpler if could use this change of variables to prove existence and the other estimates. However, this is not possible because then the solution in the original coordinates is $\bm{h}\circ \bm{\Psi^{-1}}$ and there currently exists no result which the author is aware of that in general gives us the desired regularity for this type of composition. Bourdaud and Sickel \cite{BourdaudSickel2011} gives a good overview of existing results.
\end{remark}

\section{Irrotational Solutions}\label{sec:Irrotational}
Since the main theorem relies on an irrotational solution $(\bm{v}_0,p_0)\in H^{\sigma+2}(\Omega;\mathbb{R}^3)\times H^{\sigma+2}(\Omega;\mathbb{R})$ to equations \eqref{eq:Euler1a}--\eqref{eq:Eulerbdry} we include a brief discussion of that problem here. Because this is not the main topic we restrict ourselves to a relatively easy special case; the case when $\Omega=U\times (0,L)$ for some open set $U\subset \mathbb{R}^2$, with $\partial\Omega_-=U\times \{0\}$ and $\partial\Omega_+=U\times \{L\}$. Since the solution is required to be irrotational it can be rewritten in terms of a scalar potential, $\psi$, for $\bm{v}_0$. It is well known that the equations \eqref{eq:Euler1a}--\eqref{eq:Eulerbdry} are satisfied by $\bm{v}_0=\nabla \psi$ and $p_0=\frac{1}{2}\vert \nabla \psi \vert^2$ if $\psi$ satisfies
\begin{equation}\label{eq:potential}
\begin{aligned}
\Delta\psi&=0 &&\inn \Omega,\\
\bm{n}\cdot \nabla \psi&=\phi &&\onn \partial\Omega.
\end{aligned}
\end{equation}
That we can find a potential with the required properties given by the following proposition.
\begin{proposition} If $\phi$ is a given function satisfying, $\phi\vert_{\partial\Omega_\pm}\in H^{\sigma+3/2}(\partial\Omega_\pm,\mathbb{R})$, $\phi\vert_{\partial\Omega_0}\equiv 0$ and $n_\pm\cdot\nabla\phi=0$, where $n_\pm$ denotes the normal of $\partial\partial\Omega_\pm$, then there exists a solution $\psi\in H^{\sigma+3}(\Omega;\mathbb{R})$ to $\eqref{eq:potential}$. Moreover, if $\phi\vert_{\partial\Omega_+}>c$ and $\phi \vert_{\partial\Omega_-}<-c$ for some constant $c>0$, then the integral curves of $\nabla \psi$ have no stagnation points and $\nabla \psi$ has no closed integral curves.
\end{proposition}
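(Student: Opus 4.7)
The plan is to construct $\psi$ by separation of variables exploiting the cylindrical geometry, and then to establish the qualitative properties of the integral curves via a maximum principle argument applied to $\partial_z\psi$.

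For the construction, let $\{\varphi_k\}_{k\geq 0}$ be an $L^2(U)$-orthonormal basis of Neumann eigenfunctions of $-\Delta_{x,y}$ on $U$ with eigenvalues $0 = \lambda_0 < \lambda_1 \leq \lambda_2 \leq \cdots \to \infty$. Expanding $\phi|_{U\times\{0\}} = \sum_k c_k^- \varphi_k$ and $\phi|_{U\times\{L\}} = \sum_k c_k^+ \varphi_k$, the ansatz $\psi(x,y,z) = \sum_k \psi_k(z) \varphi_k(x,y)$ reduces the harmonic equation to the decoupled ODE family $\psi_k''(z) = \lambda_k\psi_k(z)$ on $(0,L)$ with boundary data $-\psi_k'(0) = c_k^-$ and $\psi_k'(L) = c_k^+$. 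The lateral Neumann condition is automatic, since each $\varphi_k$ has vanishing normal derivative on $\partial U$. The $k=0$ mode is solvable precisely when $c_0^- + c_0^+ = 0$, equivalently $\int_{\partial\Omega}\phi\,dS = 0$; the modes $k\geq 1$ are then given explicitly by hyperbolic functions of $\sqrt{\lambda_k}\,z$.

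For the regularity $\psi\in H^{\sigma+3}(\Omega;\mathbb{R})$, the compatibility hypothesis $\bm{n}_\pm\cdot\nabla\phi = 0$ on $\partial U\times\{0,L\}$ is precisely the first-order boundary compatibility condition that places $\phi|_{\partial\Omega_\pm}$ in the domain of $(-\Delta_N)^{(\sigma+3/2)/2}$ on $U$ (recall $\sigma+3/2\in(2,5/2)$, so only a single boundary condition is required). This yields $\sum_k (1+\lambda_k)^{\sigma+3/2}|c_k^\pm|^2 < \infty$, from which the explicit form of $\psi_k$ and its $z$-derivatives gives convergence of the series in $H^{\sigma+3}(\Omega)$. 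An alternative route, more in line with the paper's machinery, is to introduce the lift $\Psi_0(x,y,z) := -z\,\phi_-(x,y) + \frac{z^2}{2L}\bigl(\phi_+(x,y)+\phi_-(x,y)\bigr)$, which satisfies all three boundary conditions exactly (using the compatibility hypothesis to kill the lateral data), and then to solve for the remainder via a localized, purely Neumann variant of Proposition \ref{Prop:DirNeu}, applied to a Poisson equation with homogeneous boundary data and $H^{\sigma+1}$ right-hand side.

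Once $\psi$ is in hand, the qualitative claims follow from analyzing $w := \partial_z\psi$. Since $\psi$ is harmonic and smooth enough, so is $w$. On the base $z=0$ one has $w = -\phi|_{\partial\Omega_-} > c$ and on the top $z=L$ one has $w = \phi|_{\partial\Omega_+} > c$; differentiating the lateral condition $\bm{n}_U\cdot\nabla_{x,y}\psi = 0$ with respect to $z$ gives $\bm{n}_U\cdot\nabla_{x,y}w = 0$ on $\partial U\times(0,L)$. Thus $w$ satisfies a mixed Dirichlet--Neumann problem with strictly positive Dirichlet data. The strong maximum principle rules out an interior minimum (or else forces $w$ to be a constant, which by the boundary data must be at least $c$), while Hopf's lemma applied on the smooth part of the lateral surface excludes a minimum occurring there, since the outward normal derivative would have to be strictly negative but is zero by construction. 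Consequently $\min_{\overline{\Omega}} w \geq c > 0$, so $\partial_z\psi \geq c$ throughout $\overline{\Omega}$. It follows at once that $\nabla\psi$ has no zeros (its third component is bounded below by $c$), so there are no stagnation points, and that along any integral curve $\gamma(t)$ the $z$-coordinate satisfies $\dot z(t) \geq c > 0$, which precludes closed orbits.

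The main obstacle will be the careful justification of $H^{\sigma+3}$ regularity: either one precisely translates between the spectral formulation and the Sobolev norm (identifying the edge compatibility with membership in the domain of a suitable power of the Neumann Laplacian), or one adapts the edge theory of Section \ref{Sec:DCP} to the purely Neumann setting on the cylindrical geometry. The remaining ingredients --- separation of variables, the maximum principle, and the monotonicity argument for integral curves --- are standard.
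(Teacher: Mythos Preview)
Your treatment of the qualitative properties is correct and matches the paper exactly: both argue by applying the strong maximum principle and Hopf's lemma to $w=\partial_{x_3}\psi$, which satisfies a mixed problem with Dirichlet data bounded below by $c$ on $\overline{\partial\Omega_\pm}$ and zero Neumann data on $\partial\Omega_0$, forcing $w\geq c$ throughout and hence precluding both stagnation points and closed orbits.

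For existence, the paper simply invokes the machinery of Section~\ref{Sec:DCP} directly on the inhomogeneous Neumann problem \eqref{eq:potential}: after localizing and flattening near an edge, the cutoff $\eta\psi$ satisfies a problem of the type in Proposition~\ref{Prop:DirNeu} with Neumann data on the two physical faces and homogeneous Dirichlet on the artificial outer faces (so $\sum J_i>0$ and the proposition applies), and the single edge compatibility condition reduces precisely to $n_\pm\cdot\nabla\phi=0$. Your primary route via the Neumann eigenfunction expansion is a genuinely different argument exploiting the product structure $\Omega=U\times(0,L)$; it can be made rigorous, but the identification of $\{\phi\in H^{\sigma+3/2}(U):\partial_\nu\phi=0\}$ with $D((-\Delta_N)^{(\sigma+3/2)/2})$ and the ensuing series estimates carry essentially all the work, whereas the paper's route is a one-line reference to already-established results.

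Your alternative route (b) via the explicit lift $\Psi_0$ has a genuine regularity gap: since $\phi_\pm\in H^{\sigma+3/2}(U)$ gives only $\Delta_{x,y}\phi_\pm\in H^{\sigma-1/2}(U)$, one finds $\Delta\Psi_0\in H^{\sigma-1/2}(\Omega)$ rather than $H^{\sigma+1}(\Omega)$ as you claim, so the remainder $\psi-\Psi_0$ obtained from the homogeneous Neumann problem lands only in $H^{\sigma+3/2}$, not $H^{\sigma+3}$. The fix is that no lift is needed at all: Proposition~\ref{Prop:DirNeu} already accepts inhomogeneous data $\zeta_i\in H^{m+\sigma+J_i+1/2}(\Gamma_i)$, which for a Neumann face ($J_i=0$, $m=1$) is exactly the $H^{\sigma+3/2}$ trace regularity you have for $\phi$.
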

\begin{proof}
We can find the solution using the same method as in section \ref{Sec:DCP} given that we have imposed the compatibility conditions $n_\pm\cdot\nabla\phi=0$. To show that the solution has no stagnation points and no closed integral curves we use the fact that the third component of $\bm{v}_0$  is given by $\partial_{x_3}\psi$ which solves
\[
\begin{aligned}
\Delta\partial_{x_3}\psi&=0 && \inn \Omega,\\
\partial_{x_3}\psi&=-\phi && \onn \partial\Omega_-,\\
\partial_{x_3}\psi&=\phi && \onn \partial\Omega_+,\\
n\cdot\nabla\partial_{x_3}\psi&=0 && \onn \partial\Omega_0.
\end{aligned}
\]
By the maximum principle \cite[Section $6.4.2$]{Evans2010} $\partial_{x_3}\psi$ will not attain its minimum at an interior point. Furthermore by Hopf's lemma \cite[Section $6.4.2$]{Evans2010} if this minimum is attained at a boundary point, except the points at the edges $\partial\partial\Omega_+$ and $\partial\partial\Omega_-$, the normal derivative must be negative. Hence this minimum is not attained on $\partial\Omega$ and thus must be attained at either $\overline{\partial\Omega}_-$ or $\overline{\partial\Omega}_+$. Since $\phi$ is bounded below by $c>0$ on $\partial\Omega_+$ and $-\phi$ is bounded below on $\partial\Omega_-$ by the same constant $c$ it follows that $\partial_{x_3}\psi$ is bounded below in all of $\Omega$ by $c$. Thus $\bm{v}_0\neq 0$ since the third component is always positive. Moreover, this also implies that there are no closed integral curves. Indeed, if 
\[
\partial_t\bm{\Phi}(\bm{x}_0,t)=\bm{v_0}(\bm{\Phi}(t,\bm{x}_0))
\]
then $\Phi_3$ is monotonically increasing in $t$ and we can never have $\bm{\Phi}(t_1,\bm{x}_0)=\bm{\Phi}(t_2,\bm{x}_0)$ if $t_1\neq t_2$.
\end{proof}
\section*{Acknowledgment}\noindent{This project has received funding from the European Research Council (ERC) under the European Union's Horizon 2020 research and innovation programme (grant agreement no 678698). The author would like to thank Erik Wahlén for valuable discussions and proofreading.}
\newpage
\appendix
\section{Appendix}\label{appendix}
\begin{proposition}\label{prop:Hspaces}
Let $U$ be a subset of $\mathbb{R}^d$. Then we have the following:

(i) If $s_1, s_2\geq 0$ and $s=s_1+s_2$, then
\[
H^s(U\times\mathbb{R}^n;\mathbb{R})\hookrightarrow H^{s_1}(U;H^{s_2}(\mathbb{R}^n;\mathbb{R})).
\]

(ii) If $s\geq 0$, then
\[
H^s(U;L^2(\mathbb{R}^n;\mathbb{R}))\cap L^2(U;H^s(\mathbb{R}^n;\mathbb{R}))= H^s(U\times \mathbb{R}^n;\mathbb{R}).
\]

(iii) If $s_1'\leq s_1\leq s_1''$ and $s_2''\leq s_2\leq s_2'$ and $s_1+s_2=s_1'+s_2'=s_1''+s_2''$, then
\[
H^{s_1'}(U;H^{s_2'}(\mathbb{R}^n;\mathbb{R}))\cap H^{s_1''}(U;H^{s_2''}(\mathbb{R}^n;\mathbb{R}))\hookrightarrow H^{s_1}(U;H^{s_2}(\mathbb{R}^n;\mathbb{R})).
\]

(iv) If $r\geq 0$, $s>r+n/2$ and $s>n/2+d/2$, then for $u\in H^s(U\times \mathbb{R}^n;\mathbb{R})$ and $v\in H^r(U; L^2(\mathbb{R}^n;\mathbb{R}))$ we have $uv\in H^r(U; L^2(\mathbb{R}^n;\mathbb{R}))$ with
\[
\Vert uv\Vert_{H^r(U; L^2(\mathbb{R}^n;\mathbb{R}))}\lesssim \Vert u\Vert_{H^s(U\times \mathbb{R}^n;\mathbb{R})}\Vert v\Vert_{H^r(U; L^2(\mathbb{R}^n;\mathbb{R}))}
\]

(v) If $r\geq 0$, $s>r+d/2$ and $s>n/2+d/2$, then for $u\in H^s(U\times \mathbb{R}^n;\mathbb{R})$ and $v\in L^2(U;H^r(\mathbb{R}^n;\mathbb{R}))$ we have $uv\in L^2(U;H^r(\mathbb{R}^n;\mathbb{R}))$, with
\[
\Vert uv\Vert_{L^2(U;H^r(\mathbb{R}^n;\mathbb{R}))}\lesssim \Vert u\Vert_{H^s(U\times \mathbb{R}^n;\mathbb{R})}\Vert v\Vert_{L^2(U;H^r(\mathbb{R}^n;\mathbb{R}))}
\]
\end{proposition}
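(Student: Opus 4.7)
The plan is to reduce every statement in the proposition to pointwise inequalities on the Fourier side. By the extension definition of the norms it is enough to prove (i)--(iii) as embeddings on all of $\mathbb{R}^d\times\mathbb{R}^n$ and (iv)--(v) for functions defined there (extending $u,v$ separately, forming the product, and using that the product of extensions is an extension of the product). On $\mathbb{R}^d\times\mathbb{R}^n$ the full two-variable Fourier transform $\mathcal{F}u(\bm{\xi},\bm{\eta})$ simultaneously diagonalises every norm appearing in the proposition: $\|u\|_{H^s(\mathbb{R}^d\times\mathbb{R}^n;\mathbb{R})}^2$ is the weighted $L^2$-norm with weight $(1+|\bm{\xi}|^2+|\bm{\eta}|^2)^s$, while unravelling the definitions gives $\|u\|_{H^{s_1}(\mathbb{R}^d;H^{s_2}(\mathbb{R}^n;\mathbb{R}))}^2=\int(1+|\bm{\xi}|^2)^{s_1}(1+|\bm{\eta}|^2)^{s_2}|\mathcal{F}u(\bm{\xi},\bm{\eta})|^2\,d\bm{\xi}\,d\bm{\eta}$.

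\textbf{Parts (i)--(iii).} Part (i) will then follow from the pointwise bound $(1+|\bm{\xi}|^2)^{s_1}(1+|\bm{\eta}|^2)^{s_2}\leq(1+|\bm{\xi}|^2+|\bm{\eta}|^2)^{s_1+s_2}$, valid for $s_1,s_2\geq 0$. Part (ii) will follow from the two-sided bound $(1+|\bm{\xi}|^2+|\bm{\eta}|^2)^s\sim(1+|\bm{\xi}|^2)^s+(1+|\bm{\eta}|^2)^s$ for $s\geq 0$. For part (iii), the constraints $s_1+s_2=s_1'+s_2'=s_1''+s_2''$ together with the orderings on the $s_i$ force $(s_1,s_2)$ to be a convex combination of $(s_1',s_2')$ and $(s_1'',s_2'')$ with the same weight $\theta=(s_1-s_1')/(s_1''-s_1')\in[0,1]$ in both coordinates; Young's inequality $a^\theta b^{1-\theta}\leq\theta a+(1-\theta)b$ applied to the two weights, followed by integration against $|\mathcal{F}u(\bm{\xi},\bm{\eta})|^2$, yields the embedding.

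\textbf{Parts (iv)--(v).} These are vector-valued analogues of the classical multiplier inequality $\|fg\|_{H^r(\mathbb{R}^m)}\lesssim\|f\|_{H^\alpha(\mathbb{R}^m)}\|g\|_{H^r(\mathbb{R}^m)}$, valid for $\alpha>\max(m/2,r)$, and I will reduce to it. For (iv), I will take the partial Fourier transform in $\bm{x}$ only, apply Minkowski's inequality to the resulting convolution $\widehat{uv}(\bm{\xi},\bm{y})=\int\hat u(\bm{\xi}-\bm{\zeta},\bm{y})\hat v(\bm{\zeta},\bm{y})\,d\bm{\zeta}$ together with the pointwise bound $\|fg\|_{L^2(\mathbb{R}^n)}\leq\|f\|_{L^\infty(\mathbb{R}^n)}\|g\|_{L^2(\mathbb{R}^n)}$; this reduces the task to estimating $F\ast G$ in $H^r(\mathbb{R}^d)$ for the scalar norm functions $F(\bm{\xi})=\|\hat u(\bm{\xi},\cdot)\|_{L^\infty(\mathbb{R}^n)}$ and $G(\bm{\xi})=\|\hat v(\bm{\xi},\cdot)\|_{L^2(\mathbb{R}^n)}$. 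I then apply the scalar multiplier inequality on $H^r(\mathbb{R}^d)$ with an exponent $\alpha$ in the range $\max(d/2,r)<\alpha<s-n/2$, using the Sobolev embedding $H^{s-\alpha}(\mathbb{R}^n)\hookrightarrow L^\infty(\mathbb{R}^n)$ to bound $F$ and part (i) to pass back to $\|u\|_{H^s(\mathbb{R}^d\times\mathbb{R}^n)}$; the existence of such an $\alpha$ is exactly the hypothesis $s>r+n/2$ and $s>d/2+n/2$. Part (v) is handled dually: bound $\|u(\bm{x},\cdot)v(\bm{x},\cdot)\|_{H^r(\mathbb{R}^n)}\lesssim\|u(\bm{x},\cdot)\|_{H^\alpha(\mathbb{R}^n)}\|v(\bm{x},\cdot)\|_{H^r(\mathbb{R}^n)}$ pointwise in $\bm{x}$ by the scalar multiplier on $\mathbb{R}^n$ with $\alpha>\max(n/2,r)$, pull $\|u(\bm{x},\cdot)\|_{H^\alpha}$ out through the supremum in $\bm{x}$, and control that supremum via the Sobolev embedding $H^{s-\alpha}(U)\hookrightarrow L^\infty(U)$ (requiring $\alpha<s-d/2$) together with part (i); the hypothesis $s>r+d/2$ and $s>n/2+d/2$ supplies an admissible $\alpha$.

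\textbf{Main obstacle.} The crux will be the vector-valued Minkowski/multiplier reduction in parts (iv)--(v) and the bookkeeping of Sobolev exponents in the range conditions on $\alpha$; once this is in place the remaining work consists of elementary Fourier-multiplier manipulations on $\mathbb{R}^d\times\mathbb{R}^n$ that transfer back to $U\times\mathbb{R}^n$ through the extension definition of the norms.
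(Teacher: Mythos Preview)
Your proposal is correct and the overall architecture---reduce to $U=\mathbb{R}^d$ via an extension operator, then compare Fourier weights pointwise---matches the paper exactly; parts (i) and (ii) are identical to the paper's argument.

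Two minor differences are worth noting. For (iii) you use the convex-combination/Young inequality $a^\theta b^{1-\theta}\le\theta a+(1-\theta)b$, while the paper splits the integral into the regions $|\bm{\xi}|\le|\bm{\eta}|$ and $|\bm{\xi}|>|\bm{\eta}|$ and shifts exponents in each; your route is a little cleaner. For (iv), the paper takes the simpler path you already use in (v) but with the roles of $\bm{x}$ and $\bm{y}$ swapped: write $\|uv\|_{H^r(\mathbb{R}^d;L^2(\mathbb{R}^n))}^2=\int_{\mathbb{R}^n}\|uv(\cdot,\bm{y})\|_{H^r(\mathbb{R}^d)}^2\,d\bm{y}$ by Fubini, apply the scalar multiplier estimate in the $\bm{x}$-variable, and control $\sup_{\bm y}\|u(\cdot,\bm{y})\|_{H^t(\mathbb{R}^d)}$ by $\|u\|_{H^s}$ via Sobolev embedding in $\bm{y}$. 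Your Minkowski/convolution argument for (iv) is valid but unnecessarily elaborate; the slicing you propose for (v) works symmetrically for (iv) and is what the paper does.
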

\begin{remark} If $X$ and $Y$ are Banach spaces we use the notation $X\hookrightarrow Y$ to denote that $X$ is continuously embedded in $Y$. I.e. if $x\in X$ then $x\in Y$ and there exists a constant $C$ such that $\Vert x\Vert_Y\leq C\Vert x\Vert_X$. By $X=Y$ we mean $X\hookrightarrow Y$ and $Y\hookrightarrow X$.
\end{remark}
\begin{proof} The case when $U=\mathbb{R}^d$ follows from the definition and some estimates we show below for parts \textit{(i)}--\textit{(iv)}. The esitmates for the last part are omitted because of the similarity to the estimates of part \textit{(iv)}. Theorem 4.1 in \cite{Amann2000} gives us the existence of an extension operator in $\mathcal{L}(H^{s_1}(U;H^{s_2}(\mathbb{R}^n;\mathbb{R}));H^{s_1}(\mathbb{R}^d;H^{s_2}(\mathbb{R}^n;\mathbb{R})))$. Proper application of this operator will give us the desired result.

To show the estimates in the case when $U=\mathbb{R}^d$ let $\bm{x}\in\mathbb{R}^d$ and $\bm{y}\in\mathbb{R}^n$, $\mathcal{F}_x$ be the Fourier transform $\bm{x}\to\bm{\xi}$, and $\mathcal{F}_y$ the Fourier transform $\bm{y}\to\bm{\eta}$.

\textit{(i)} If $u(\bm{x},\bm{y})\in H^s(\mathbb{R}^{d+n})$, then 
\begin{align*}
\Vert u\Vert_{H^{s_1}(\mathbb{R}^d,H^{s_2}(\mathbb{R}^n)}^2&=\int_{\mathbb{R}^d}\int_{\mathbb{R}^n} (1+\vert\bm{\xi}\vert^2)^{s_1}(1+\vert\bm{\eta}\vert^2)^{s_2}\vert\hat{u}(\bm{\xi},\bm{\eta})\vert^2d\bm{\eta}d\bm{\xi}\\
&\leq \int_{\mathbb{R}^d}\int_{\mathbb{R}^n} (1+\vert\bm{\xi}\vert^2+\vert\bm{\eta}\vert^2)^{s_1}(1+\vert\bm{\xi}\vert^2+\vert\bm{\eta}\vert^2)^{s_2}\vert\mathcal{F}_x\mathcal{F}_y u(\bm{\xi},\bm{\eta})\vert^2d\bm{\eta}d\bm{\xi}\\
&=\Vert u\Vert_H^s(\mathbb{R}^{d+n}).
\end{align*}

\textit{(ii)} We only need to show the embedding $H^s(\mathbb{R}^d;L^2(\mathbb{R}^n))\cap L^2(\mathbb{R}^d; H^s(\mathbb{R}^n))\hookrightarrow H^s(\mathbb{R}^{d+n})$ since the other follows immediately from part \textit{(i)}. \\If $u\in H^s(U;L^2(\mathbb{R}^n;\mathbb{R}))\cap L^2(U;H^s(\mathbb{R}^n;\mathbb{R}))$, then
\begin{align*}
\Vert u\Vert_{H^s(\mathbb{R}^{d+n})}&=\int_{\mathbb{R}^{d+n}}(1+\vert\bm{\xi}\vert^2+\vert\bm{\eta}\vert^2)^s\vert \mathcal{F}_x\mathcal{F}_y u(\bm{\xi},\bm{\eta})\vert^2d\bm{\xi}d\bm{\eta}\\
&\leq \int_{\mathbb{R}^{d+n}}2^s[(1+\vert\bm{\xi}\vert^2)^s+(1+\vert\bm{\eta}\vert^2)^s]\vert \mathcal{F}_x\mathcal{F}_y u(\bm{\xi},\bm{\eta})\vert^2d\bm{\xi}d\bm{\eta}\\
&\lesssim \Vert u\Vert_{H^s(\mathbb{R}^d;L^2(\mathbb{R}^n))}+\Vert u\Vert_{L^2(\mathbb{R}^d;H^s(\mathbb{R}^n))}.
\end{align*}

\textit{(iii)} If $u\in H^{s_1'}(U;H^{s_2'}(\mathbb{R}^n;\mathbb{R}))\cap H^{s_1''}(U;H^{s_2''}(\mathbb{R}^n;\mathbb{R}))$, then 
\begin{align*}
\Vert u\Vert_{H^{s_1}(\mathbb{R}^d;H^{s_2}(\mathbb{R}^n))}&=\int_{\vert\bm{\xi}\vert\leq \vert\bm{\eta}\vert}(1+\vert\bm{\xi}\vert^2)^{s_1}(1+\vert \bm{\eta}\vert^2)^{s_2}\vert \mathcal{F}_x\mathcal{F}_y u(\bm{\xi},\bm{\eta})\vert^2d\bm{\eta}d\bm{\xi}\\
&\qquad+\int_{\vert\bm{\xi}\vert>\vert\bm{\eta}\vert}(1+\vert\bm{\xi}\vert^2)^{s_1}(1+\vert \bm{\eta}\vert^2)^{s_2}\vert \mathcal{F}_x\mathcal{F}_y u(\bm{\xi},\bm{\eta})\vert^2d\bm{\eta}d\bm{\xi}\\
&\leq \int_{\vert\bm{\xi}\vert\leq \vert\bm{\eta}\vert}(1+\vert\bm{\xi}\vert^2)^{s_1'}(1+\vert \bm{\eta}\vert^2)^{s_2+s_1-s_1'}\vert \mathcal{F}_x\mathcal{F}_y u(\bm{\xi},\bm{\eta})\vert^2d\bm{\eta}d\bm{\xi}\\
&\qquad+\int_{\vert\bm{\xi}\vert>\vert\bm{\eta}\vert}(1+\vert\bm{\xi}\vert^2)^{s_1+s_2-s_2''}(1+\vert \bm{\eta}\vert^2)^{s_2''}\vert \mathcal{F}_x\mathcal{F}_y u(\bm{\xi},\bm{\eta})\vert^2d\bm{\eta}d\bm{\xi}\\
&\leq\Vert u\Vert_{H^{s_1'}(\mathbb{R}^d;H^{s_2'}(\mathbb{R}^n))}+\Vert u\Vert_{H^{s_1''}(\mathbb{R}^d;H^{s_2''}(\mathbb{R}^n))}
\end{align*}

\textit{(iv)} If $u\in H^s(U\times \mathbb{R}^n;\mathbb{R})$ and $v\in H^r(U; L^2(\mathbb{R}^n;\mathbb{R}))$, then
\begin{align*}
\Vert uv\Vert_{H^{r}(\mathbb{R}^d;L^2(\mathbb{R}^n))}^2&=\int_{\mathbb{R}^n} \Vert uv(\cdot,\bm{y})\Vert_{H^r(\mathbb{R}^d)}^2d\bm{y}\\
&\lesssim \int_{\mathbb{R}^n} \Vert u(\cdot,\bm{y})\Vert_{H^t(\mathbb{R}^d)}^2\Vert v(\cdot,\bm{y})\Vert_{H^r(\mathbb{R}^d)}^2d\bm{y}\\
&\lesssim \sup_{\bm{y}\in \mathbb{R}^n} \Vert u(\cdot,\bm{y})\Vert_{H^t(\mathbb{R}^d)}^2 \Vert u\Vert_{H^r(\mathbb{R}^d;L^2(\mathbb{R}^n))}^2
\end{align*}
for any $t\geq r$ and $t>d/2$. Set $\sigma=s-t$ and note that
\begin{align*}
\vert \mathcal{F}_x u(\bm{\xi},\bm{y})\vert&=\vert \mathcal{F}_y^{-1}\mathcal{F}_x\mathcal{F}_y u (\bm{\xi},\bm{y})\vert\\
&\lesssim \int_{\mathbb{R}^n}(1+\vert \bm{\eta}\vert^2)^{-\sigma /2}(1+\vert \bm{\eta}\vert^2)^{\sigma /2}\vert \mathcal{F}_x\mathcal{F}_y u(\bm{\xi},\bm{\eta})\vert d\bm{\eta}\\
&\lesssim \left(\int_{\mathbb{R}^n} (1+\vert \bm{\eta}\vert^2)^{-\sigma}d\bm{\eta}\right)^{1/2}\left(\int_{\mathbb{R}^n}(1+\vert \bm{\eta}\vert^2)^{\sigma}\vert \mathcal{F}_x\mathcal{F}_y u(\bm{\xi},\bm{\eta})\vert^2 d\bm{\eta}\right)^{1/2}\\
&\lesssim \left(\int_{\mathbb{R}^n}(1+\vert \bm{\eta}\vert^2)^{\sigma}\vert \mathcal{F}_x\mathcal{F}_y u(\bm{\xi},\bm{\eta})\vert^2 d\bm{\eta}\right)^{1/2}
\end{align*}
if $s-t=\sigma>n/2$. Under the assumptions of the proposition we can choose a $t$ such that all the inequalities involving $t$ are satisfied. It follows that
\begin{align*}
\Vert u(\cdot,\bm{y})\Vert_{H^t(\mathbb{R}^d)}&=\int_{\mathbb{R}^d}(1+\vert\bm{\xi}\vert^2)^t\vert \mathcal{F}_x u(\bm{\xi},\bm{y})\vert^2d\bm{\xi}\\
&\lesssim \int_{\mathbb{R}^d}(1+\vert\bm{\xi}\vert^2)^t \int_{\mathbb{R}^n}(1+\vert \bm{\eta}\vert^2)^{\sigma}\vert \mathcal{F}_x\mathcal{F}_y u(\bm{\xi},\bm{\eta})\vert^2 d\bm{\xi} d\bm{\eta}\\
&\leq \int_{\mathbb{R}^{d+n}}(1+\vert\bm{\xi}\vert^2+\vert\bm{\eta}\vert^2)^{t+\sigma}\vert \mathcal{F}_x\mathcal{F}_y u(\bm{\xi},\bm{\eta})\vert^2 d\bm{\xi} d\bm{\eta}\\
&=\Vert u\Vert_{H^s(\mathbb{R}^{n+d})},
\end{align*}
which gives the desired estimate.
\end{proof}
\newpage
\bibliographystyle{siam}
\bibliography{References}
\end{document}